\documentclass[microtype]{gtpart}

\usepackage{pinlabel}
\usepackage{tikz}
\usetikzlibrary{arrows.meta}
\tikzset{>=latex}
\tikzset{inner sep=3pt}
\usepackage[all]{xy}

%%% Start of metadata

\title[The Gamma and Strominger--Yau--Zaslow conjectures]{The Gamma and Strominger--Yau--Zaslow conjectures: a tropical approach to periods}

%  First author
%
\author[Abouzaid]{Mohammed Abouzaid}
\givenname{Mohammed}
\surname{Abouzaid}
\address{Department of Mathematics, Columbia 
University, New York, NY 10027, USA}
\email{abouzaid@math.columbia.edu}
\urladdr{}

%  Second author 
%
\author[Ganatra]{Sheel Ganatra}
\givenname{Sheel}
\surname{Ganatra}
\address{Department of Mathematics, University of 
Southern California, Los Angeles, CA 90089, USA}
\email{sheel.ganatra@usc.edu}
\urladdr{}

%  Third author 
%
\author[Iritani]{Hiroshi Iritani}
\givenname{Hiroshi}
\surname{Iritani}
\address{Department of Mathematics, Kyoto University, 
Kitashirakawa-Oiwake-cho, Sakyo-ku, Kyoto, 606-8502, Japan}
\email{iritani@math.kyoto-u.ac.jp}
\urladdr{}

%  Fourth author 
%
\author[Sheridan]{Nick Sheridan}
\givenname{Nick}
\surname{Sheridan}
\address{School of Mathematics, University of Edinburgh, 
Edinburgh EH9 3FD, UK}
\email{nick.sheridan@ed.ac.uk}
\urladdr{}

\keyword{mirror symmetry, SYZ conjecture, periods, tropical geometry, Riemann zeta values, Gamma class, Batyrev mirror}
\subject{primary}{msc2010}{53D37}
\subject{secondary}{msc2010}{11G42, 14J33, 14T05, 32G20}

%  Fill in the reference number if your article is stored on the arXiv
%  eg \arxivreference{math.GT/0512347} or \arxivreference{1203.4984}.
%  The newer style reference numbers (with a period) do not require the
%  prefix arxiv: or math.NT/ or anything else. Just the reference
%  number is sufficient.

\arxivreference{1809.02177}

%%% End of metadata

%%% Start of user-defined macros %%%
\newcounter{mainthm}
\newtheorem{main}[mainthm]{Theorem}

\newtheorem{thm}{Theorem}[section]
\newtheorem{lem}[thm]{Lemma}
\newtheorem{prop}[thm]{Proposition}

\theoremstyle{remark}
\newtheorem{rmk}[thm]{Remark}
\newtheorem{example}[thm]{Example}

\theoremstyle{plain} 
\newtheorem{conje}[mainthm]{Conjecture}

\def\R{\mathbb{R}}
\def\C{\mathbb{C}}
\def\N{\mathbb{N}}
\def\Z{\mathbb{Z}}
\def\T{\mathbb{T}}

\newcommand{\Log}{\operatorname{Log}}
\newcommand{\Sing}{\operatorname{Sing}}
\newcommand{\vol}{\operatorname{vol}}
\newcommand{\grad}{\operatorname{grad}}
\newcommand{\ch}{\operatorname{ch}}

\newcommand{\wh}[1]{\widehat{#1}}

\newcommand{\bP}{\mathbb{P}}
\newcommand{\Zring}{\mathring{Z}}
\newcommand{\bigO}{O\left(t^\epsilon\right)}
\newcommand{\hG}{\wh{G}}
\newcommand{\hGamma}{\wh{\Gamma}}
\newcommand{\tPhi}{\widetilde{\Phi}}
\newcommand{\iu}{\mathtt{i}}

\newcommand{\cO}{\mathcal{O}}

\def\parfrac#1#2{\frac{\partial #1}{\partial #2}}

\begin{document}

\begin{abstract} 
	We propose a new method to compute asymptotics of periods using tropical geometry, in which the Riemann zeta values appear naturally as error terms in tropicalization. Our method suggests how the Gamma class should arise from the Strominger--Yau--Zaslow conjecture. We use it to give a new proof of (a version of) the Gamma Conjecture for Batyrev pairs of mirror Calabi--Yau hypersurfaces. 
\end{abstract}

\maketitle

\section{Introduction}

\subsection{The `error term' in tropicalization}
\label{subsec:error}

The relationship between tropical and algebraic geometry is based on the `Maslov dequantization':
\[ \log_T\left(T^a + T^b\right) \approx \max(a,b)  \quad \text{ for $T \gg 1$.}\]
Setting $b=0$, we can use this to arrive at the following approximation:
\[ (\log T)^2\int_{-A}^A \log_T\left(1+T^a\right) da \approx (\log T)^2 \int_{-A}^A \max(0,a) da = \frac{A^2 }{2}(\log T)^2.\]
However there is an error term in this approximation (see Figure \ref{fig:Maslov}): in the limit $T \to \infty$ it is given by 
\begin{align*}
(\log T)^2 \int_{-A}^A \left(\log_T \left(1+T^a \right) - \max(0,a)\right) da &= 2 (\log T)^2\int_0^A \log_T\left(1+T^{-a}\right) da \\
&= 2 \int_{T^{-A}}^1 \frac{\log(1+x)}{x} dx \\
&= 2\sum_{k=1}^\infty \frac{(-1)^{k+1}}{k^2} +O(T^{-A}) \\
&= \zeta(2) +O(T^{-A}).
\end{align*}
In other words, $\zeta(2)=\pi^2/6$ arises as a subleading term in the Maslov dequantization. 

Going one dimension up, we can calculate the error term in the analogous approximation 
\[ 
(\log T)^3\int_U \log_T\left(1+T^{a_1} +T^{a_2}\right) da_1 da_2 \approx (\log T)^3 \int_U \max(0,a_1,a_2) da_1da_2.\]
We will assume that $U \subset \R^2$ is a polygon containing the origin, and transverse to the legs of the `tropical curve' $\Sing(\max(0,s_1,s_2))$ (i.e. the locus where at least two of $0,s_1,s_2$ are tied for largest). 
The error term is equal to
\[ \int_{\log T \cdot U} 
\left( \log\left(1+e^{s_1}+e^{s_2}\right) -\max(0,s_1,s_2) \right) ds_1ds_2.
\]
The integrand looks approximately like $\log\left(1+e^s\right)-\max(0,s)$ in the directions normal to the legs of the tropical curve.
Thus the leading piece of the error term is equal to the total length of the tropical curve contained inside the region $\log T \cdot U$ multiplied by $\zeta(2)$, which will be linear in $\log T$. 
It turns out that there is also a constant term, which is equal to $\zeta(3)$ (see Proposition \ref{prop:asymptotics_of_integral}). 

The main idea of this paper is to use such approximations to compute asymptotic expansions for period integrals, and to relate them to the Gamma class of the mirror, which we describe in the next section. 

\begin{rmk} These error terms compute the volume of (parts of) amoebas; see Passare \cite{Passare:zeta2} and Passare and Rullg{\aa}rd \cite{Passare-Rullgard:amoeba} for the study in 2 dimensions. 
\end{rmk}

\begin{figure}[ht]
\centering
\includegraphics[scale=0.4]{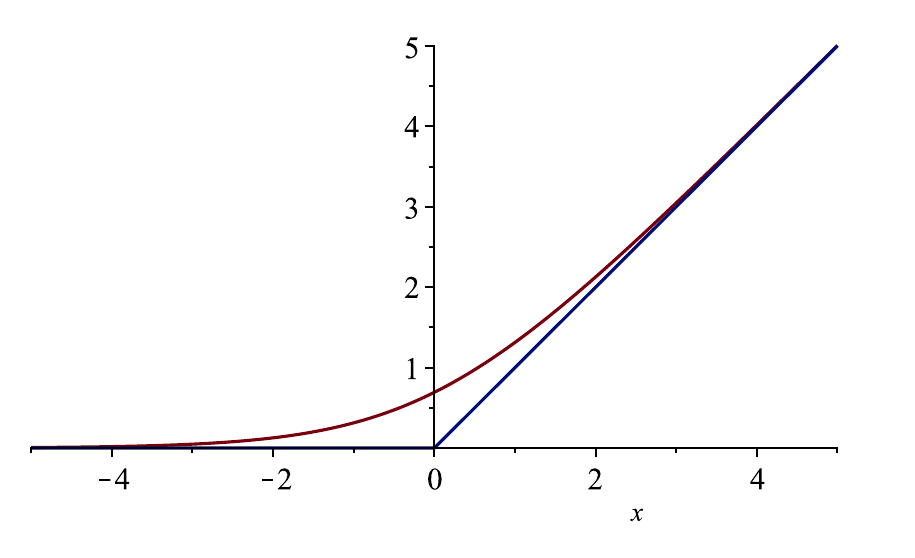}
\includegraphics[scale=0.3]{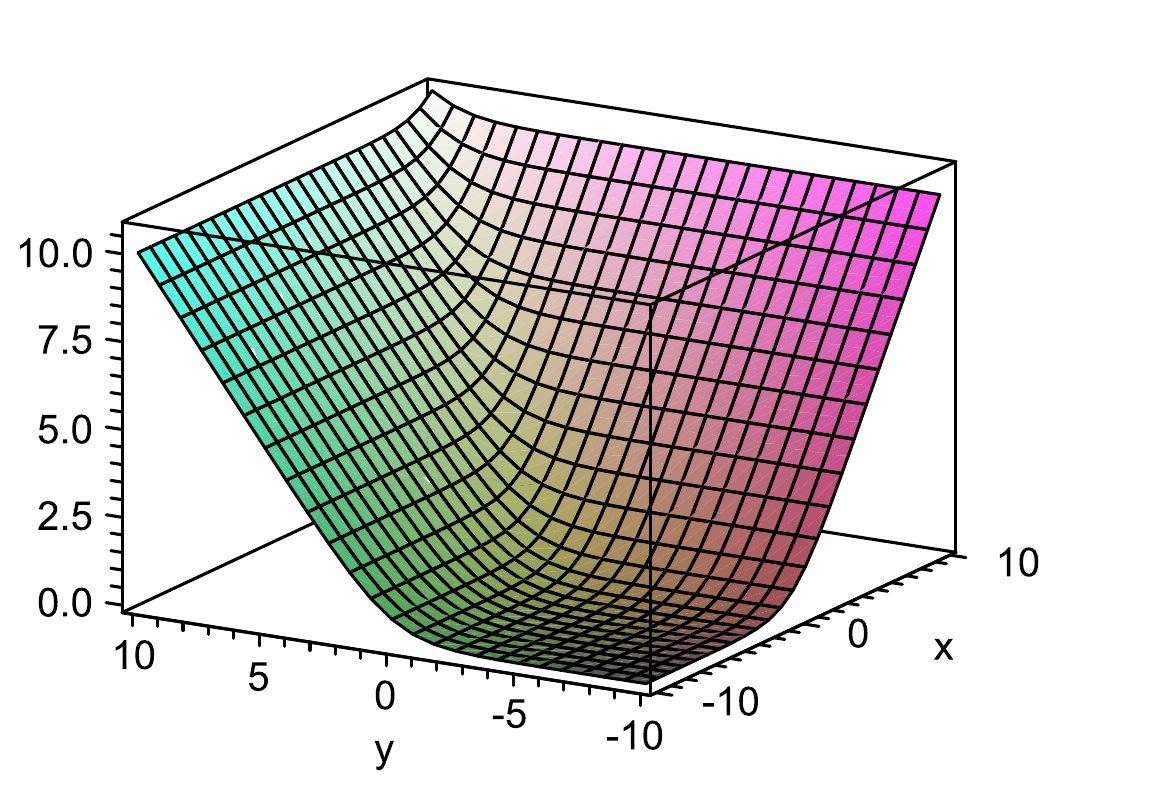}
\caption{The graphs of $\log_T(1+T^x)$ and $\log_T(1+T^x+T^y)$ when  $T=e$. These images were produced using Maple 2018 and Maple 12 respectively \cite{Maple}.}
\label{fig:Maslov}
\end{figure}

\subsection{The Gamma class and mirror periods}

It has been long observed that products of the characteristic numbers of a Calabi--Yau manifold by zeta values can be found in the asymptotics of periods of the  mirror near the large-complex structure limit. For example, $\zeta(3)$ multiplied by the Euler number of a quintic threefold appears in the famous  work of Candelas, de la Ossa, Green and Parkes \cite{CdlOGP:pair}. Later, Hosono, Klemm, Theisen and Yau \cite{HKTY:compint} (see also Hosono, Lian and Yau \cite{Hosono-Lian-Yau:GKZ}) observed that certain Chern numbers of Calabi--Yau complete intersection threefolds can be read off from hypergeometric solutions to the mirror Picard--Fuchs equation. This observation led Libgober \cite{Libgober:Gamma} to introduce the (inverse) Gamma class which makes sense for any almost-complex (or stably complex-oriented) manifold. The \emph{Gamma class}\footnote{When $X$ is an orbifold, the Gamma class has a component in the twisted sector. We nevertheless ignore the twisted sector component since it does not intervene in the statement of the Gamma Conjecture.} of an almost-complex manifold $X$ is defined to be the cohomology class 
\begin{align*} 
\hGamma_X & = \prod_i \Gamma(1+\delta_i) 
= \exp\left(-\gamma c_1(X) + \sum_{k=2}^\infty (-1)^k \zeta(k) (k-1)! \ch_k(TX) \right) \in H^*(X,\R)
\end{align*} 
where $\delta_i$ are the Chern roots of the tangent bundle $TX$ (such that $c(TX) = \prod_i (1+\delta_i)$) and $\gamma = \lim_{n\to \infty} (1+\frac{1}{2} +\cdots+ \frac{1}{n} -\log n)$ is the Euler constant.  
In terms of the Gamma class, a conjecture put forward by Hosono \cite[Conjecture 2.2]{Hosono:central} (see also Horja \cite{Horja:hypergeometric}, Enckevort and van Straten \cite{Enckevort-Straten}, Borisov and Horja \cite{Borisov-Horja:FM},  Almkvist, van Straten and Zudilin \cite{Almkvist-vanStraten-Zudilin:Apery}, Golyshev \cite{Golyshev:deresonating} and Iritani \cite{Iritani:periods}) can be restated as follows: 
\begin{conje}[Gamma Conjecture in the Calabi--Yau case]  
\label{conje:CY_Gamma}
Let $X$ be a Calabi--Yau manifold equipped with a symplectic form $\omega$ and let $\{Z_t\}_{t\in \Delta^*}$ be a family of Calabi--Yau manifolds parametrized by $t$ in a small punctured disc $\Delta^*$ that corresponds to $(X,\omega)$ under mirror symmetry. For a suitable choice of a holomorphic volume form $\Omega_t$ on $Z_t$ and of a coordinate $t$, if a Lagrangian cycle $C_t\subset Z_t$ is mirror to a coherent sheaf $E$ on $X$, then 
\[
\int_{C_t\subset Z_t} \Omega_t = \int_X t^{-\omega} \cdot \hGamma_X \cdot (2\pi \iu)^{\deg/2} \ch(E) + \bigO
\ \text{as $t\to 0$ in a fixed angular sector}
\]
for some $\epsilon>0$, where $\iu=\sqrt{-1}$ is the imaginary unit.
\end{conje}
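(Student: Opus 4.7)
The plan is to reduce the period integral to an explicit integral of Laurent-polynomial logarithms of the type treated in Section~\ref{subsec:error}, and then read off both the leading asymptotic and the Gamma correction from the tropical expansion.

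First, I would use an SYZ-type Lagrangian torus fibration $Z_t \to B$ whose base, after rescaling by $\log|t|$, matches an integral affine structure on a base mirror to $(X, \omega)$. In the Batyrev setting this can be made quite concrete: both $(X, \omega)$ and $Z_t$ arise from a pair of reflexive polytopes, the period reduces to an integral over a cycle in an algebraic torus against a Laurent-polynomial volume form, and the mirror Lagrangian cycle $C_t$ for a coherent sheaf $E$ can be presented as a chain over a region $U \subset B$ weighted by a differential form representative $\omega_E$ of $\ch(E)$.

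Second, after this reduction the period takes (schematically) the form
\[
\int_{C_t} \Omega_t \;=\; \int_{U} \log_T\!\bigl(1 + \sum_i T^{a_i(u)}\bigr)\,\omega_E(u)\, du \;+\; \bigO,
\]
with $T = |t|^{-1}$. Replacing $\log_T(1 + \sum T^{a_i})$ by $\max(0, a_1, \dots)$ gives the leading asymptotic; under the SYZ identification this pairs the tropical limit of $\Omega_t$ against $\ch(E)$ and produces the main term $\int_X t^{-\omega} \cdot (2\pi\iu)^{\deg/2}\,\ch(E)$. The errors are localized along the tropical hypersurface $\Sing(\max(0, a_1, \dots)) \cap U$: the model calculations in Section~\ref{subsec:error} already show that successive strata of decreasing dimension contribute factors of $\zeta(2), \zeta(3), \dots$ to the expansion. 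The strata of the tropical discriminant are mirror, under the SYZ correspondence, to cycles Poincar\'e dual to Chern classes of $TX$, and matching these stratum contributions term by term against the exponential series
\[
\hGamma_X \;=\; \exp\!\Bigl(-\gamma c_1(X) + \sum_{k\geq 2}(-1)^k\, \zeta(k)(k-1)!\, \ch_k(TX)\Bigr)
\]
should recover the stated asymptotic, with the Euler constant $\gamma$ absorbed into the correct normalization of the coordinate $t$ (as in the substitution $s = (\log T)\, a$ that produces $\gamma$-terms in model integrals of $\log(1 + e^{-s})$ type).

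The hard part will be the last step: rigorously identifying the combinatorial sum of zeta contributions, accumulated over all strata of the tropical discriminant and to all orders, with the exponential series defining $\hGamma_X \cdot \ch(E)$, rather than some perturbation of it. This requires an exact match between the tropical strata on the $Z_t$ side and representatives of the Chern classes on the $X$ side, together with uniform control of the error term $\bigO$ near the singular locus where the tropical approximation degrades. In the Batyrev case both of these should be tractable: the strata are governed by the combinatorics of the reflexive polytope, and the required estimates can be patched from the explicit model integrals in Section~\ref{subsec:error}, reducing the matching to a combinatorial identity on polytope data.
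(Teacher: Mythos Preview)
The statement you are addressing is a \emph{conjecture}, and the paper does not prove it: the paper itself remarks that it is ``not a mathematically precise conjecture since it depends on mirror symmetry.'' What the paper actually proves are the special cases Theorems~\ref{thm:Bat_Gamma_struc} and~\ref{thm:Bat_Gamma_lb} for Batyrev mirror pairs, with $E$ the structure sheaf or an ambient line bundle. There is therefore no proof of Conjecture~\ref{conje:CY_Gamma} in the paper to compare against, and your proposal cannot succeed as stated because the notions of ``mirror family'' and ``Lagrangian cycle mirror to $E$'' are not defined in the required generality.

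Your outline is nonetheless in the spirit of the paper's strategy for those special cases, but it diverges in two substantive ways. First, your schematic period $\int_U \log_T(1+\sum T^{a_i})\,\omega_E\,du$ is too loose: the paper does not weight by a differential-form representative of $\ch(E)$. For the structure sheaf, the period over $C_t^+$ is decomposed into pieces $B_t^{q,K}$ according to which monomials dominate, each piece is approximated by the volume of a polytope, and these volumes are converted to integrals over $Y_{\Delta_\lambda}$ via the Duistermaat--Heckman theorem. For line bundles, $E$ enters via a phase shift of the coefficients of the defining polynomial (Section~\ref{sec:phase}), and $\ch(E)$ emerges from a complexified Duistermaat--Heckman formula rather than from a choice of form on the base.

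Second, your proposed resolution of the ``hard part''---identifying tropical strata geometrically with Poincar\'e duals of Chern classes of $TX$---is not what the paper does. That picture is sketched only in dimensions $\le 3$ (Section~\ref{sec:disc}), and the paper explicitly notes in Remark~\ref{rmk:kinks} that it does not generalize cleanly. Instead, the local integrals are packaged into a formal symmetric function $\hG_X$ in the divisor classes (equation~\eqref{eq:hG_X}), and the identity $\hG_X=\hGamma_X$ is proved analytically, reducing ultimately to Dirichlet's multivariate beta integral (Section~\ref{subsec:G_Gamma}). The matching is thus an algebraic identity of power series, not a stratum-by-stratum geometric correspondence. (A minor point: the Euler constant $\gamma$ plays no role here since $c_1(X)=0$ for a Calabi--Yau, so there is nothing to absorb.)
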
 
\begin{rmk} 
(a) The original conjecture of Hosono \cite{Hosono:central} is stated as an equality between periods and explicit hypergeometric series in the case of complete intersection Calabi--Yau manifolds.  The version presented here can be obtained from the leading asymptotics of the hypergeometric series. 

(b) Both sides of the equality in the Gamma Conjectures are multivalued functions of $t$: on the right hand side, a choice of branch of $\log t$ is required to specify a value for $t^{-\omega}$, while on the left hand side the monodromy of the family $Z_t$ in general acts non-trivially on the homology classes of Lagrangian cycles. The family of Lagrangian cycles $C_t$ mirror to $E$ is identified over the universal cover of the punctured disc $\Delta^*$. 

(c) This is not a mathematically precise conjecture since it depends on mirror symmetry. In the case of Fano manifolds, there is a precise conjecture (the original Gamma Conjecture) which can be formulated purely in terms of quantum cohomology of a Fano manifold $X$ (see Galkin, Golyshev and Iritani \cite{GGI:gammagrass}, Galkin and Iritani \cite{Galkin-Iritani:Gammamirror} and Sanda and Shamoto \cite{Sanda-Shamoto}) and is closely related to Dubrovin's conjecture \cite{Dubrovin:ICM}. 

(d) In the above conjecture, we implicitly assume that $t=0$ is a point of maximal degeneracy in the sense that the associated limit mixed Hodge structure is Hodge--Tate (see Deligne \cite{Deligne:localbehavior}) and that the mirror map takes the form\footnote{If the mirror map is of the form $-\omega \log t + h + O(t)$ with $h\in H^2(X)$, then we need to replace $\hGamma_X$ with $e^h \hGamma_X$ in the conjecture; the class $h$ appears, for instance, when we replace $t$ with $2t$.} $-\omega \log t + O(t)$ so that $t=0$ corresponds to the large-radius limit point of $(X,\omega)$. We could further assume that the volume form $\Omega_t$ is normalized by a Hodge-theoretic condition as discussed in \cite{CdlOGP:pair, Deligne:localbehavior} and Morrison \cite{Morrison:guide}. 

(e) Using the Gamma class, Katzarkov, Kontsevich and Pantev \cite{KKP:Hodge} and Iritani \cite{Iritani:integral} introduced a rational/integral structure on the quantum cohomology, which conjecturally corresponds to the natural rational/integral structure (given by Betti cohomology) on the B-side. 
\end{rmk}

Although Conjecture \ref{conje:CY_Gamma} is not mathematically precise, we can make it precise by specifying what we mean by a ``mirror pair'' and by fixing the correspondence between equivalence classes of cycles on the two sides: 
the Strominger--Yau--Zaslow (SYZ) conjecture posits that mirror pairs should carry dual (possibly singular) torus fibrations, and building on this, the Gross--Siebert program gives a geometric construction of mirror pairs in some large generality (see Strominger, Yau and Zaslow \cite{SYZ} and Gross and Siebert \cite{Gross-Siebert:realaffine}). 
This determines the correspondence between Lagrangian cycles and coherent sheaves appearing in the Gamma Conjecture. The present paper aims to understand/explain the Gamma Conjecture from the viewpoint of the SYZ fibrations. 

\subsection{The Gamma Conjecture for Batyrev mirrors}\label{subsec:gam_bat}

Let $\Delta \subset P_\R$ be a reflexive polytope, and $\nabla \subset Q_\R$ its polar dual, where $P \cong \Z^{n+1}$ and $Q := P^\vee$ and we write $P_K =P\otimes_\Z K$, $Q_K=Q\otimes_\Z K$ for a $\Z$-module $K$. 
Let $V$ be a subset of $\partial\nabla \cap Q$ containing all vertices of $\nabla$ and let $\lambda\colon V \to \R_+$ be a positive real-valued function.\footnote{$\R_+$ denotes the set of positive real numbers.} We assume that there exists a simplicial fan $\Sigma_\lambda$ on $Q_\R$ such that the set of one-dimensional cones of $\Sigma_\lambda$ is  $\{\R_{\ge 0} \cdot q:q\in V\}$ and that $\lambda$ extends to a strictly-convex piecewise-linear function $\lambda\colon Q_\R\to \R$ with respect to the fan $\Sigma_\lambda$. 
We set
\[ f_t(z) := \sum_{q \in V} t^{\lambda_q} \cdot z^q\]
for $t \in \R_+$ and $z\in P_{\C^*}$, and
\[ \Zring_t := \left\{1 = f_t(z) \right\} \subset P_{\C^*}.\]
The positive real locus $C_t^+ \subset \Zring_t$ is defined to be the intersection $\Zring_t \cap P_{\R_+}$; 
this is homeomorphic to a real $n$-dimensional sphere for a sufficiently small $t>0$ 
(see Section \ref{subsec:tropical_setup}).  

Let $Y_\nabla$ denote the toric variety defined by the normal fan of $\nabla$ and take a partial crepant resolution $\wh{Y}_\nabla$ of $Y_\nabla$ which has at worst quotient singularities. The hypersurface $\Zring_t$ compactifies to a quasi-smooth Calabi--Yau hypersurface $Z_t\subset \wh{Y}_\nabla$. 
The holomorphic volume form
\[ 
\Omega_t :=\left. \frac{d\log z_0 \wedge d\log z_1 \wedge \cdots \wedge d\log z_n}{df_t(z)}\right|_{\Zring_t}
\]
also extends to $Z_t$, where $(z_0,z_1,\dots,z_n)$ denotes $\C^*$-coordinates on $P_{\C^*}\cong (\C^*)^{n+1}$.  
On the $B$-side of mirror symmetry we will consider the period integral
\[ 
\int_{C_t^+\subset \Zring_t} \Omega_t.
\]

On the $A$-side of mirror symmetry, we consider the compact convex polytope
\[ \Delta_\lambda := \{ p \in P_\R: \langle q,p\rangle+\lambda_q \ge 0, \ \forall q\in V\}.\]
Our assumption on $\lambda$ ensures that the slopes of the edges at each vertex form a basis of $P_\R$. 
We have a corresponding toric orbifold $Y_{\Delta_\lambda}$ equipped with a K\"ahler class $[\omega_\lambda] = \sum_{q\in V} \lambda_q \cdot D_q$, where $D_q$ is the toric divisor corresponding to the $q$th face $\{p \in \Delta_\lambda: \langle q,p \rangle+\lambda_q =0\}$ of $\Delta_\lambda$. The Batyrev mirror of $Z_t$ is given by a quasi-smooth Calabi--Yau hypersurface $X \subset Y_{\Delta_\lambda}$ (see Batyrev \cite{Batyrev:dual_polyhedra}). It is expected that the large-radius limit of $X$ corresponds to the large complex structure limit $t\to 0$ for $Z_t$ and that the Lagrangian sphere $C_t^+\subset Z_t$ is mirror to the structure sheaf $\cO_X$ of $X$. Thus it makes sense to substitute $C_t=C_t^+$ and $E=\cO_X$ in Conjecture \ref{conje:CY_Gamma}. Our first main result is a proof of this special case of the Gamma Conjecture:

\begin{main}[Gamma Conjecture for the structure sheaf on Batyrev mirror pairs]  
\label{thm:Bat_Gamma_struc}
We have:
\[
\int_{C_t^+\subset Z_t} \Omega_t = \int_X t^{-\omega} \cdot \hGamma_X + \bigO
\quad \text{as $t\to +0$, for some $\epsilon>0$.}
\]
\end{main}

Our second main result is a generalization of Theorem \ref{thm:Bat_Gamma_struc}, in which the structure sheaf $\mathcal{O}_X$ is replaced with an arbitrary ambient line bundle on $X$, i.e., one restricted from $Y_{\Delta_\lambda}$. 
Any line bundle on $Y_{\Delta_\lambda}$ has the form $\mathcal{O}_\nu = \mathcal{O}(-\sum_{q \in V} \nu_q D_q)$ for some $\nu \in \Z^V$; let $\mathcal{L}_\nu$ denote the restriction of $\mathcal{O}_\nu$ to $X$. 
We now describe the cycle $C_t^{(\nu)} \subset Z_t$ mirror to $\mathcal{L}_\nu$. 
Consider the polynomial function $f_{t,\theta}(z)$ on $P_{\C^*}$
\[
f_{t,\theta}(z) := \sum_{q\in V} e^{\iu \theta_q} t^{\lambda_q} z^q 
\]
that we obtain from $f_t(z)$ by multiplying the coefficients $t^{\lambda_q}$ by $e^{\iu \theta_q}$ for some $\theta_q \in \R$, and the associated hypersurface   
\[
\Zring_{t,\theta} := \{f_{t,\theta}(z) = 1\} \subset P_{\C^*}.
\]
Let $C_t^{(\nu)} \subset \Zring_{t}$ be the parallel transport of the positive real cycle $C_t^+ \subset \Zring_t$ as we vary $\theta$ continuously from $\theta=0$ to $\theta = 2\pi \nu$.

\begin{rmk}\label{rmk:why}
Let us explain why the cycle $C_t^{(\nu)}$ is expected to be isotopic to a Lagrangian cycle mirror to $\mathcal{L}_\nu$. 
One expects a commutative diagram of categories as follows:
\[ \xymatrix{ D^bCoh(Y_{\Delta_\lambda}) \ar[d]_-{\text{restriction}} \ar@{<->}[rr]_{\text{HMS}}^\simeq && D^b FS(P_{\C^*},f_t) \ar@{<->}[rrr]_-{\text{compactify}}^\simeq && &D^b FS(\overline{P}_{\C^*},\bar{f}_t) \ar[d]^-{L \mapsto \partial L} \\ D^bCoh(X) \ar@{<->}[rrrrr]_{\text{HMS}}^\simeq &&&&& D^b Fuk(Z_t).}\]
The top arrow labelled `HMS' is homological mirror symmetry for the toric variety $Y_{\Delta_\lambda}$ and its Landau--Ginzburg mirror $(P_{\C^*},f_t)$, where we fix $t$. 
The other arrow on the top line identifies the Fukaya--Seidel category of this Landau--Ginzburg model with that of its fibrewise compactification (see Seidel  \cite{Seidel:II.5}).
The objects of $FS(P_{\C^*},f_t)$ may be taken to be certain Lagrangian submanifolds of $P_{\C^*}$ with boundary on the fibre $f_t^{-1}(1) = \Zring_t$, while the objects of $FS(\overline{P}_{\C^*},\bar{f}_t)$ are Lagrangians with boundary on the compactified fibre $\bar{f}_t^{-1}(1) = Z_t$. 
The left vertical arrow denotes the derived restriction functor, while the right one sends a Lagrangian to its boundary; the commutativity of this diagram appears, e.g., in Auroux \cite[Conjecture 7.7]{Auroux:Tduality}. 
Under the (presumably removable) assumption that $Y_{\Delta_\lambda}$ is smooth, Abouzaid \cite{Abouzaid:homogeneous, Abouzaid:HMS_toric} has constructed certain objects $L_\nu$ of $FS(P_{\C^*},f_t)$, and proved that they are mirror to the line bundles $\mathcal{O}_\nu$  (see also the work of Fang, Liu, Treumann and Zaslow \cite{FLTZ:T-dual}, Fang \cite{Fang:central_charges}, Fang and Zhou \cite{Fang-Zhou}, and Hanlon \cite{Hanlon:monodromy}). 
Therefore, by commutativity of the above diagram, one expects $\mathcal{L}_\nu$ to be mirror to $\partial L_\nu \subset Z_t$. 
One can identify $\partial L_\nu$ with $C_t^{(\nu)}$ up to an isotopy: this becomes transparent using the tropical construction of $C_t^{(\nu)}$ given in Section \ref{sec:phase}. 
\end{rmk}

In light of Remark \ref{rmk:why}, it makes sense to substitute $C_t = C_t^{(\nu)}$ and $E = \mathcal{L}_\nu$ in Conjecture \ref{conje:CY_Gamma}. Our second main result is a proof of this special case of the Gamma Conjecture (generalizing Theorem \ref{thm:Bat_Gamma_struc}, which is the case $\nu=0$):

\begin{main}[Gamma Conjecture for ambient line bundles on Batyrev mirror pairs]  
\label{thm:Bat_Gamma_lb}
We have:
\[
\int_{C_t^{(\nu)} \subset Z_t} \Omega_t = \int_X t^{-\omega}  \cdot \hGamma_X \cdot e^{-\sum_{q \in V} 2\pi \iu \nu_q D_q} + \bigO
\quad \text{as $t\to +0$, for some $\epsilon>0$.} 
\] 
\end{main} 

We remark that similar results have been obtained in \cite[Theorem 1.1]{Iritani:periods}; the novelty in our work is the method of proof, which relates the Gamma Conjecture to the SYZ Conjecture and the Gross--Siebert program. 
In fact, due to the local nature of the computations, we expect that it should not be significantly harder to implement our approach for general Gross--Siebert mirrors than for Batyrev mirrors. 
The Gamma Conjecture for general Gross--Siebert mirrors is open.

In a different direction, we expect that it should be possible to implement our approach to prove the Gamma Conjecture for certain Lagrangian cycles $C_t$ fibring over `tropical cycles' in the base of the SYZ fibration (see Casta\~{n}o and Bernard  \cite{Castano-Bernard2014} and Ruddat and Siebert \cite{Ruddat2019} for the notion of `tropical cycle' in closely-related contexts, and Matessi \cite{Matessi2018a} and Mikhalkin \cite{Mikhalkin2018} for the construction of the corresponding Lagrangian cycles). 
Indeed this is essentially done in Ruddat and Siebert \cite{Ruddat2019}, in the case that the tropical cycle in the base of the SYZ fibration is 1-dimensional. 
In this case the interesting part of the Gamma class (i.e., the part involving zeta values) does not appear in the computation: the mirror coherent sheaf is the skyscraper sheaf of a curve, and in particular its Chern character is concentrated in degrees $\ge 2n-2$, whereas the zeta values in the Gamma class of a Calabi--Yau only appear in degrees $\ge 4$. 
This reflects the fact that the 1-dimensional tropical cycle can be (topologically) deformed to avoid the codimension-2 singular locus of the SYZ fibration, where the non-trivial contributions to the Gamma class are concentrated.\footnote{We should mention that the aim of \cite{Ruddat2019} is rather different from that of the current paper: the authors show that the natural coordinate on the base of the family constructed by Gross--Siebert is a canonical coordinate in the Hodge-theoretic sense.}

\subsection{Proofs of Theorems \ref{thm:Bat_Gamma_struc} and \ref{thm:Bat_Gamma_lb}}

We compute the asymptotics of the period integrals appearing in Theorems \ref{thm:Bat_Gamma_struc} and \ref{thm:Bat_Gamma_lb} by breaking them up into local pieces using tropical geometry. 
This procedure involves an extra layer of combinatorial complexity in the case of Theorem \ref{thm:Bat_Gamma_lb}, so we give the proof of Theorem \ref{thm:Bat_Gamma_struc} first in the name of transparency.

The `local period integrals' that will appear are 
\begin{equation}
\label{eqn:Ilm}
 I_{\ell;m_1,\ldots,m_k} :=  \int_{[0,\infty)^k} s_1^{m_1}\ldots s_k^{m_k} \cdot g_\ell(e^{-s_1},\ldots,e^{-s_k}) \,ds_1\cdots ds_k \quad\text{for $\ell, m_j \in \Z_{\ge 0}$,}
\end{equation}
where
\[g_\ell(X_1,\ldots,X_k) := \sum_{K \subset \{1,\dots,k\}} (-1)^{|K|} \cdot \left(\log \left(1+ \textstyle\sum_{ j \in K} X_j\right)\right)^\ell.\]
The integral \eqref{eqn:Ilm} converges because the integrand decays exponentially at infinity, due to the bound
\begin{equation}
\label{eqn:boundint}
g_\ell(X_1,\ldots,X_k) \le C_\ell \cdot \prod_{j=1}^k X_j \qquad \text{ on $[0,1]^k$.}
\end{equation}
This bound can be proved by observing that the function $g_\ell(X_1,\dots,X_k)$ is analytic 
 in a neighbourhood of $[0,1]^k$, and vanishes along the coordinate hyperplanes $\{X_j = 0\}$, 
 so is divisible by $\prod_{j=1}^k X_j$. We note that $g_\ell=0$ for $\ell = 0$. 

We define a class in $H^*(X)$ by
\begin{equation}
\label{eq:hG_X} 
\hG_X = 1 + \sum_{q,J,\ell,\vec{m}} \frac{I_{\ell;\vec{m}}}{\ell! \prod_{j\in J} m_j!}  \cdot (-D_q) \cdot (-\sigma)^{\ell-1} \cdot \prod_{j \in J} (-D_j)^{m_j + 1}
\end{equation} 
where $\sigma= \sum_{j \in V} D_j$ is the first Chern class of $Y_{\Delta_\lambda}$, and the sum is over all $q \in V$, all nonempty subsets $J \subset V$ not containing $q$, $\ell \ge 1$ and $\vec{m}\in (\Z_{\ge 0})^{J}$. 

\begin{thm}
\label{thm:periodR}
Let $(X, Z_t)$ be a Batyrev mirror pair of Calabi--Yau hypersurfaces and let $C_t^+\subset Z_t$ denote the positive real locus. Then we have
\[ \int_{C_t^+\subset Z_t} \Omega_t = \int_X t^{-\omega_\lambda} \cdot \hG_X + \bigO\quad \text{as $t\to +0$, for some $\epsilon>0$.}\]
\end{thm}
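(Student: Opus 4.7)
\emph{Step 1: Reduce to an integral in tropical coordinates.} The plan is to start from the residue representation
\[ \int_{C_t^+} \Omega_t = \int_{\R^{n+1}} \delta\!\left(f_t(e^w)-1\right) dw, \]
with $w_i = \log z_i$, and rescale $a := -w/\log t$ so that the integrand is a Maslov-dequantization expression localized near the tropical hypersurface $\{\max_q(\langle q, a\rangle - \lambda_q)=0\}$, which equals $\partial\Delta_\lambda$ under the identification $p = -a$. I would then decompose $\R^{n+1}$ using the Voronoi-type cells $R_q := \{w: t^{\lambda_q}e^{\langle q,w\rangle} \ge t^{\lambda_{q'}}e^{\langle q',w\rangle} \text{ for all } q'\}$ subordinate to the fan $\Sigma_\lambda$, and compute the period cell by cell.

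\emph{Step 2: Leading order.} In the interior of $R_q$ the equation $f_t=1$ lies exponentially close to the affine hyperplane $\{\langle q,w\rangle = \lambda_q|\log t|\}$ and $\Omega_t$ restricts to the flat Lebesgue measure there; this cell therefore contributes volume $|\log t|^n\vol(F_q)$ to $\int_{C_t^+}\Omega_t$, where $F_q \subset \partial\Delta_\lambda$ is the dual face. Summing over $q$ and using the toric identity $\int_{D_q}\omega_\lambda^n = n!\,\vol(F_q)$ together with the adjunction $[X] = \sigma$ in $Y_{\Delta_\lambda}$ yields $\tfrac{1}{n!}\int_X\omega_\lambda^n$, which is precisely the top-degree term of $\int_X t^{-\omega_\lambda}\cdot 1$.

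\emph{Step 3: Subleading corrections.} Near a codimension-$k$ stratum of $\partial\Delta_\lambda$ labelled by a pair $(q,J)$ with $q\in V$ and $J\subset V\setminus\{q\}$ of size $k$, the deviation from the tropical approximation is controlled by the normal variables $s_j := \log\bigl(t^{\lambda_q}e^{\langle q,w\rangle}/t^{\lambda_j}e^{\langle j,w\rangle}\bigr) \in [0,\infty)$ for $j\in J$. Writing $\log f_t(e^w) = \log(t^{\lambda_q}e^{\langle q,w\rangle}) + \log\!\bigl(1+\sum_{j\in J}e^{-s_j}+\text{lower order}\bigr)$ and Taylor-expanding tangentially, the inclusion-exclusion principle built into the definition of $g_\ell$ converts the local contribution into exactly the integrals $I_{\ell;\vec m}/(\ell!\prod_j m_j!)$. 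The cohomological factor $(-D_q)(-\sigma)^{\ell-1}\prod_{j\in J}(-D_j)^{m_j+1}$ would then be recovered by Poincaré duality: the product $(-D_q)\prod_{j\in J}(-D_j)^{m_j+1}$ is dual to the stratum equipped with the appropriate Taylor multiplicities, while the $\ell-1$ copies of $-\sigma$ account for the residual insertions of $[X]=\sigma$ produced when a power $(\log f_t)^\ell$ is interpreted as an intersection number on the hypersurface $X \subset Y_{\Delta_\lambda}$.

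\emph{Error estimate and main obstacle.} Contributions from outside tubular neighborhoods of the tropical strata, and the truncation errors in each local expansion, are exponentially small in $|\log t|$ by \eqref{eqn:boundint}, and so are absorbed into the $O(t^\epsilon)$ remainder. The principal difficulty I anticipate is the combinatorial bookkeeping in Step~3: verifying that summing local contributions over all tropical strata reproduces \eqref{eq:hG_X} exactly, with the correct signs, multinomial coefficients, and the factor $(-\sigma)^{\ell-1}$ whose geometric meaning only emerges after rewriting integrals over $X$ as intersections on $Y_{\Delta_\lambda}$. A secondary technical issue is handling transition regions where more than $k+1$ monomials are comparable at the same scale; this should be dealt with via a partition of unity subordinate to $\Sigma_\lambda$ and an induction on the codimension of the stratum, so that such overlaps contribute only to already-accounted-for higher-codimension faces.
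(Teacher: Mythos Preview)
Your high-level strategy matches the paper's: pull back to $P_\R$, decompose by dominant monomials, extract the leading volume term, and organize the subleading corrections by strata of $\partial\Delta_\lambda$. However, Step~3 has a genuine gap, and the tool you are missing is precisely what the paper uses to make the combinatorics work cleanly: the Duistermaat--Heckman theorem.

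In the paper, after decomposing $B_t$ into pieces $B_t^{q,K}$ (a sharp polyhedral decomposition, not a partition of unity), the integral over each piece is reduced to the volumes of certain polytopes $E_{q,J}(a,b)$ in the $c$-plane, where $a=\beta_q$ is the dominant level and $b_j=\beta_j-\beta_q$ are the normal coordinates. The key step (Lemma~\ref{lem:DH}) is the identity
\[
\vol\bigl(E_{q,J}(a,b)\bigr)=\int_{Y_{\Delta_\lambda}}\exp\Bigl(\omega_\lambda-\sum_{j\in J}b_jD_j-a\,\sigma\Bigr)\cdot D_q\prod_{j\in J}D_j,
\]
obtained by recognizing $E_{q,J}(a,b)$ as a face of the perturbed polytope $\Delta_{\lambda'}$ with $\lambda'_m=\lambda_m-a-b_m\mathbf{1}_{m\in J}$. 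This is what produces the divisor classes: expanding $e^{-\sum b_jD_j}$ gives the $(-D_j)^{m_j}/m_j!$ factors, and expanding $e^{-a\sigma}$ gives the $(-\sigma)^\ell/\ell!$ factors. The crucial point is that the dominant level $a$ on $B_t^{q,K}$ equals $-\log_t(1+\sum_{k\in K}t^{b_k})+O(t^\epsilon)$, so after the change of variables $s_j=-b_j\log t$ one has $e^{-a\sigma}=(1+\sum_{k\in K}e^{-s_k})^{-\sigma}$, whose expansion in $\sigma$ yields the $\log^\ell$ pieces. Summing over $K\subset J$ then assembles $g_\ell$, and one $\sigma$ is absorbed into $\int_X=\int_{Y_{\Delta_\lambda}}\sigma\cdot(-)$, leaving $(-\sigma)^{\ell-1}$.

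Your ``Poincar\'e duality plus Taylor multiplicities'' sketch does not supply this mechanism. In particular, your explanation of $(-\sigma)^{\ell-1}$ as ``residual insertions of $[X]=\sigma$ produced when a power $(\log f_t)^\ell$ is interpreted as an intersection number'' is not how it arises: $\sigma$ enters because shifting the dominant level $a$ moves \emph{all} facets of $\Delta_\lambda$ simultaneously, which in Duistermaat--Heckman language is exactly insertion of $\sigma=\sum_jD_j$. Without this identification you will not get the precise dependence on $a$, and hence not the coupling between the $\ell$-index and $\sigma$. Similarly, the $(-D_j)^{m_j+1}$ are not ``dual to the stratum with Taylor multiplicities'' in any direct sense; they come from differentiating the polytope volume in the facet-shift parameters $b_j$. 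I would replace your Step~3 with the Duistermaat--Heckman computation; once that is in place the inclusion--exclusion over $K\subset J$ and the change of variables $s_j=-b_j\log t$ make the appearance of $I_{\ell;\vec m}$ and the formula~\eqref{eq:hG_X} automatic, and the error control follows from~\eqref{eqn:boundint} as you say.
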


Theorem \ref{thm:periodR} is proved in Section \ref{sec:periodR}. 
The proof uses tropical geometry to decompose $C_t^+$ into pieces, so that the integrals of $\Omega_t$ over these pieces are in one-to-one correspondence with the terms on the right-hand side.  

\begin{thm}
\label{thm:gamma}
We have $\hG_X = \hGamma_X$. 
\end{thm}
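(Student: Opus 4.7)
The plan is to verify the identity $\hG_X = \hGamma_X$ by a direct symbolic computation, reducing both sides to the same explicit Gamma-function product. First, combining the multiplicativity of $\hGamma$ on short exact sequences with the toric Euler sequence for $Y_{\Delta_\lambda}$ and the adjunction sequence $0 \to TX \to TY_{\Delta_\lambda}|_X \to \cO_X(\sigma) \to 0$ (the normal bundle being $\cO_X(\sigma)$ since $\sigma = \sum_q D_q = -K_{Y_{\Delta_\lambda}}$ is the class of the Calabi--Yau hypersurface), one identifies
\[
\hGamma_X = \frac{\prod_{q \in V}\Gamma(1+D_q)}{\Gamma(1+\sigma)} \qquad\text{in } H^*(X;\R).
\]

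Next, I would repackage the defining sum for $\hG_X$ using the generating-function identity
\[
\sum_{\ell \ge 0}\frac{x^\ell}{\ell!}g_\ell(e^{-s_1},\ldots,e^{-s_k}) = \sum_{K \subset \{1,\ldots,k\}}(-1)^{|K|}\Bigl(1+\sum_{j \in K}e^{-s_j}\Bigr)^x,
\]
which is immediate from the definition of $g_\ell$. Substituting $x = -\sigma$ and $y_j = -D_j$ into $\sum_{\ell,\vec{m}}\frac{I_{\ell;\vec{m}}}{\ell!\prod m_j!}x^\ell\prod y_j^{m_j}$, performing the inner sum $\sum_{q \notin J}(-D_q) = -\sigma+\sum_{j\in J}D_j$, and using that $\sum_{K\subset J}(-1)^{|K|} = 0$ for $J \ne \emptyset$ to legitimize division by $\sigma$, one rewrites $\hG_X - 1$ as a sum over nonempty $J \subset V$ of Laplace-type integrals
\[
\int_{[0,\infty)^J}e^{-\vec{D}_J\cdot\vec{s}}\sum_{K\subset J}(-1)^{|K|}\Bigl(1+\sum_{j\in K}e^{-s_j}\Bigr)^{-\sigma}d\vec{s},
\]
weighted by explicit rational combinations of the $D_j$'s and $\sigma$.

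Third, I would evaluate these integrals via the Schwinger representation $(1+u)^{-\sigma} = \Gamma(\sigma)^{-1}\int_0^\infty t^{\sigma-1}e^{-t(1+u)}dt$, justified for $\Re\sigma > 0$ and extended by analytic continuation. After interchanging the order of integration, the substitution $u_j = te^{-s_j}$ turns each $s_j$-integral for $j \in K$ into a lower-incomplete-Gamma factor $t^{-D_j}\gamma(D_j,t)$, while the $s_j$-integrals for $j \in J \setminus K$ contribute factors $1/D_j$. Inclusion--exclusion over $K \subset J$, together with $\gamma(a,t) + \Gamma(a,t) = \Gamma(a)$, collapses the resulting double sum to a product of complete Gamma functions, and the remaining $t$-integral together with the sum over $J$ (using $\sum_q D_q = \sigma$) reassembles the desired quotient $\prod_q\Gamma(1+D_q)/\Gamma(1+\sigma)$.

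The main obstacle is the combinatorial bookkeeping in the last stage. The defining formula for $\hG_X$ is deliberately asymmetric, distinguishing an index $q$ outside $J$ and carrying a factor $(-\sigma)^{\ell-1}$ rather than $(-\sigma)^\ell$; these features must conspire through the Schwinger integration, the inclusion--exclusion over $K \subset J$, and the summation over $J$ to produce the clean Gamma-quotient formula. Justifying the various interchanges of sum and integral for formal power series in nilpotent Chern classes is also a delicate technical point.
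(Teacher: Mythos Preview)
Your first two steps coincide with the paper's: the adjunction/Euler-sequence identification of $\hGamma_X$ as $\prod_{q}\Gamma(1+D_q)/\Gamma(1+\sigma)$, and the repackaging of $\hG_X$ via the generating identity $\sum_\ell \frac{x^\ell}{\ell!}g_\ell = \sum_K(-1)^{|K|}(1+\sum_{j\in K}e^{-s_j})^x$, are exactly how the paper arrives at its auxiliary function $G_X(D)$ for $D\in(\R_+)^V$.

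From there the paper takes a different and cleaner route. Rather than introducing a Schwinger parameter, it first integrates out the variables $s_j$ for $j\in J\setminus K$ (each giving $1/D_j$, as you note) and then sums over $J$ with $q$ and $K$ held fixed. A simple combinatorial identity for $\sum_{K\subset J\subset V\setminus\{q\},\,J\neq\emptyset}(-1)^{|J\setminus K|}$ collapses this outer sum, leaving only the terms with $K\sqcup\{q\}=V$. The surviving sum of $|V|$ integrals is then recognized as a single integral over the tropical projective space $\T\bP^{|V|-1}$, equivalently over the standard simplex, which evaluates directly to $\prod_j\Gamma(D_j)/\Gamma(\sigma)$ by the classical Dirichlet integral. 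No incomplete Gamma functions appear, and no second inclusion--exclusion is needed.

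Your Schwinger route may well be pushed through, but the collapse you anticipate is not immediate: after the $K$-sum you get $(-1)^{|J|}\prod_{j\in J}\bigl(1-D_j\,t^{-D_j}\gamma(D_j,t)\bigr)$ inside the $t$-integral, and the identity $\gamma+\Gamma=\Gamma(a)$ does not obviously simplify this product; the remaining $J$-sum with the awkward prefactor $-\sigma+\sum_{j\in J}D_j$ then has to absorb everything. Summing over $J$ before $K$, as the paper does, sidesteps all of this. Finally, the paper handles your worry about interchanging expansion and integration by proving $G_X(D)=\Gamma_X(D)$ as honest functions of $D\in(\R_+)^V$ first, and then showing separately that $G_X(yD)$ has $\hG_X|_{D_j\to yD_j}$ as its asymptotic expansion as $y\to+0$; since $\Gamma_X$ is analytic, the formal identity follows.
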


Theorem \ref{thm:gamma} is proved in Section \ref{sec:gamma}. 
Combining Theorems \ref{thm:periodR} and \ref{thm:gamma}, we have proved Theorem  \ref{thm:Bat_Gamma_struc}.

In Section \ref{sec:phase}, we generalize Theorem \ref{thm:periodR} to give a computation of period integrals over cycles $C_{t,\theta} \subset \Zring_{t,\theta}$, obtained by parallel transport of $C_t^+$; see Theorem \ref{thm:cycle_with_phase}. 
These include the cycles $C_t^{(\nu)}$, in the special case that all $\theta_q$ are integers. Theorem \ref{thm:Bat_Gamma_lb} is proved by combining Theorems \ref{thm:cycle_with_phase} and \ref{thm:gamma}.

\subsection{Plan}

Theorems \ref{thm:periodR} and \ref{thm:gamma} are proved in Sections \ref{sec:periodR} and  \ref{sec:gamma} respectively; together they prove Theorem \ref{thm:Bat_Gamma_struc}. 
The additional ingredient needed to prove Theorem \ref{thm:Bat_Gamma_lb}, namely Theorem \ref{thm:cycle_with_phase}, is proved in Section \ref{sec:phase}. 
However, the geometric idea underlying our approach may not shine through the tropical combinatorics of the rigorous proofs. 
Therefore, in Section \ref{sec:disc} we explain the idea behind the proofs informally, emphasizing the relationship with the SYZ conjecture and the Gross--Siebert program. 
The reader who has no interest in informal discussions can skip Section \ref{sec:disc}.

\paragraph{Acknowledgements.} 
We thank Denis Auroux for a helpful conversation at an early stage of this project. This work was done during the authors' stay at the Institute for Advanced Study (Fall 2016), Kyoto University (Winter 2017) and the Mathematical Sciences Research Institute (Spring 2018, supported by the National Science Foundation Grant Number DMS-1440140). M.A. was supported  by the National Science Foundation through agreement number DMS-1609148 and DMS-1564172, and by the Simons Foundation
through its ``Homological Mirror Symmetry'' Collaboration grant. S.G. was supported by the National Science Foundation through agreement number DMS-1128155. H.I.~was supported by JSPS KAKENHI Grant Number 16K05127, 16H06335, 16H06337, 17H06127 and 20K03582. N.S.~was partially supported by a Royal Society University Research Fellowship, a Sloan Research Fellowship, and by the National Science Foundation through Grant number DMS-1310604 and under agreement number DMS-1128155.
Any opinions, findings and conclusions or recommendations expressed in this material are those of the authors and do not necessarily reflect the views of the National Science Foundation.
\section{Discussion and examples}
\label{sec:disc}

In this section we sketch the proof of the Gamma Conjecture for the structure sheaf on Batyrev mirrors of dimension at most $3$, emphasizing the relationship with the SYZ conjecture and the Gross--Siebert program. The discussion is not intended to be completely rigorous. 

Observe that the conjecture can be rewritten as
\begin{equation}
\label{eqn:gammai} 
\int_{C_t^+} \Omega_t + \bigO = \sum_{i=0}^n \frac{(-\log t)^{n-i}}{(n-i)!} \cdot \int_X \omega_\lambda^{n-i} \cdot \hGamma_i
\end{equation}
where $\hGamma_i$ is the degree-$2i$ component of $\hGamma_X$. 
 
Roughly speaking we will stratify $C_t^+$ in accordance with the singularities of the SYZ fibration, and we will see that the codimension-$i$ strata give rise to terms in the asymptotic expansion of the period integral which precisely add up to the $i$th term on the right-hand side. 

This is a compelling picture, but unfortunately it becomes more complicated in higher dimensions (compare Remark \ref{rmk:kinks}) and we have not been able to cleanly generalize it.  
The remainder of the paper explains a more pedestrian version of our period computation which works in all dimensions, but which uses the embedding of Batyrev mirror pairs in toric varieties corresponding to dual reflexive polytopes. 

\subsection{Leading term}
\label{sec:leading}

We consider the map $\Log_t \colon P_{\C^*} \to P_\R$ 
\begin{align*}
\Log_t(z_0,\ldots,z_n) = (\log_t|z_0|,\ldots,\log_t|z_n|).
\end{align*}
In the limit $t \to 0$, the \emph{amoeba} $\Log_t(\Zring_t)$ converges to the \emph{tropical amoeba}, which is a codimension-$1$ weighted balanced polyhedral complex (see Mikhalkin \cite{Mikhalkin2004}).
The unique compact component of the complement of the tropical amoeba is precisely the polytope $\Delta_\lambda$ that appears on the $A$-side of our mirror statement.

The image of the cycle $C_t^+$ under $\Log_t$ converges to $\partial \Delta_\lambda$ as $t \to 0$, and the pullback of the volume form $\Omega_t$ to $C_t^+$ converges to the rescaling of the affine volume form on each face by $-\log t$.
Using this we obtain that the leading term of the period integral is
\[ \int_{C_t^+} \Omega_t = (-\log t)^n \cdot \vol\left( \partial \Delta_\lambda \right) + O\left((\log t)^{n-1}\right). \]
The volume on the right-hand side coincides with the sum of symplectic volumes of boundary divisors $D_j \subset Y_{\Delta_\lambda}$ by Guillemin \cite[Theorem 2.10]{Guillemin1994}. 
This coincides with the symplectic volume of $X$ (since $X$ is cohomologous to $\sum_j D_j$), which gives us the leading term in the Gamma Conjecture:
\[ \int_{C_t^+} \Omega_t = \int_X t^{-\omega_\lambda} + O\left((\log t)^{n-1}\right). \]
The sub-leading terms are related to the `bends' in $C_t^+$ where we interpolate between adjacent faces of $\partial \Delta_\lambda$, as we will see in the next sections.

This is closely related to the SYZ conjecture, according to which there should exist a special Lagrangian torus fibration with singularities $Z_t \to B$, where $B\cong \partial \Delta_\lambda$ is endowed with an affine structure.\footnote{Proving the existence of such special Lagrangian torus fibrations remains a difficult question, presenting numerous challenges, see Joyce \cite{Joyce:SYZ}. In practice, our approach only requires the existence of a weak version of an SYZ fibration, similar to that appearing in the Gross--Siebert program. Nevertheless, for the purposes of this informal discussion, we will refer freely to special Lagrangian torus fibrations.}
The cycle $C_t^+$ should correspond to the zero-section $B \subset Z_t$ of this fibration. 
The restriction of the holomorphic volume form $\Omega_t$ to the cycle $C_t^+$  is real, 
and should be approximately equal to the pullback of the affine volume form on $B$ 
(see e.g.~Gross \cite[Sections 1-2]{Gross2013}). 
Thus the leading-order term of the period integral should be
\[ \int_{C_t^+} \Omega_t = (-\log t)^n \cdot \vol(B) + O\left((\log t)^{n-1}\right).\]
The mirror $X$ should admit a dual special Lagrangian torus fibration $X \to B$, and its symplectic volume should coincide with the affine volume of $B$. 
Thus we obtain an explanation of the leading term in the Gamma Conjecture that is similar to the previous one. 
As promised, the codimension-$0$ locus of the base of the SYZ fibration gave rise to the $i=0$ term on the right-hand side of \eqref{eqn:gammai}. 

\begin{rmk} 
The relationship between the leading order asymptotics of periods and tropical geometry has been studied by several people. Mikhalkin and Zharkov \cite{Mikhalkin-Zharkov:trop_Jac} introduced periods for tropical curves in terms of affine length; Iwao \cite{Iwao:int_trop} compared tropical periods for curves with the leading asymptotics of classical periods. Yamamoto \cite{Yamamoto:CY_periods} studied periods (or, radiance obstruction) of tropical K3 hypersurfaces and compared them with classical ones.  
\end{rmk} 

\subsection{K3 surfaces}

Let us consider the two-dimensional case, so $Z_t$ and $X$ are K3 surfaces. 
There should be an SYZ fibration $p:Z_t \to B$ where $B \cong \partial \Delta_\lambda$, compare Gross  \cite{Gross2001,Gross2013} and Ruan \cite{WDRuan:Lag_fibration_survey}. 
We have one affine coordinate chart of $B$ for each face of $\Delta_\lambda$, which has the subspace affine structure; and we also have an affine coordinate chart for each vertex, which is given by projection along the remaining `ray' emanating from the vertex (see Figure \ref{fig:tropK3}). 

\begin{figure}[ht]
\centering 
\begin{tikzpicture}[x=1.2pt, y=1.2pt] 
\draw[very thick] (100,100)--(100,20)--(200,10)--(180,60)--(100,100); 
\draw[very thick] (100,100)--(200,10);

\draw[->,very thick] (100,20)--(80,0); 
\draw[very thick] (200,10)--(220,-5);
\draw[very thick] (180,60)--(200,70);
\draw[->, very thick] (100,100)-- (90,130); 

\draw[very thin] (100,20)--(180,60);  
\draw[very thin] (120,40)--(100,100); 
\draw[very thin,->] (120,40)--(90,100);
\draw[very thin,->] (120,40)--(88,72);
\draw[very thin,->] (120,40)--(85,40); 
\draw[very thin] (120,40)--(100,20);

\filldraw (100,40) circle [radius=1.5]; 
\filldraw (100,60) circle [radius=1.5]; 
\filldraw (100,80) circle [radius=1.5]; 

\filldraw (125,17.5)  circle [radius=1.5]; 
\filldraw (150,15) circle [radius=1.5]; 
\filldraw (175,12.5) circle [radius=1.5]; 

\filldraw (125,77.5)  circle [radius=1.5]; 
\filldraw (150,55) circle [radius=1.5]; 
\filldraw (175,32.5) circle [radius=1.5]; 

\filldraw (195,22.5) circle [radius=1.5]; 
\filldraw(190,35) circle [radius=1.5]; 
\filldraw(185,47.5) circle [radius=1.5]; 

\filldraw(120,90) circle [radius=1.5]; 
\filldraw(140,80) circle [radius=1.5]; 
\filldraw(160,70) circle [radius=1.5]; 

\filldraw (100,100) circle [radius=1.5]; 
\filldraw (100,20) circle [radius=1.5];
\filldraw (200,10) circle [radius=1.5];
\filldraw (180,60) circle [radius=1.5];

\fill[yellow, opacity=0.7] (109,29) -- (171,23) -- (109,79) -- (110,30); 

\shade[top color=blue, bottom color=blue!30,opacity=0.6] (100,100) -- (100,93) arc[x radius=7,y radius =7, start angle =-90, end angle =-24]--cycle; 
\shade[left color=blue,right color=blue!30,opacity=0.6] (100,75) arc[x radius =5, y radius =5, start angle=-90, end angle =90] -- cycle; 
\shade[left color=blue,right color=blue!30,opacity=0.6] (100,55) arc[x radius =5, y radius =5, start angle=-90, end angle =90] -- cycle; 
\shade[left color=blue,right color=blue!30,opacity=0.6] (100,35) arc[x radius =5, y radius =5, start angle=-90, end angle =90] -- cycle; 
\shade[bottom color=blue, top color=blue!30,opacity=0.6] (100,27) arc[x radius =7, y radius =7, start angle=90, end angle =-10] -- (100,20) -- cycle; 

\fill[red,opacity=0.1] (100,44) -- (104,46) -- (104,55) -- (100,56) -- cycle; 
\draw[red,very thick,opacity=0.4] (100,44) -- (104,46) -- (104,55) -- (100,56);
\fill[red,opacity=0.2] (100,44) -- (106,44) -- (106,51) -- (100,56) --cycle; 
\draw[red, very thick] (100,44) -- (106,44) -- (106,51) -- (100,56); 

\end{tikzpicture} 
\caption{The tropical amoeba of a mirror quartic $\Zring_t = \{ tW_1+tW_2+tW_3+t/(W_1W_2W_3)=1\}$}
\label{fig:tropK3}
\end{figure}
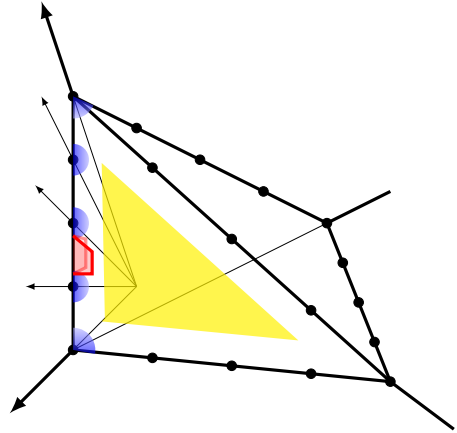 

The resulting affine structure on $B$ is defined everywhere except near certain points along the edges of $\Delta_\lambda$, which correspond to the intersections of $Z_t$ with codimension-$2$ toric strata of $Y_\nabla$. 
Generically, there are 24 of these, so we end up with an affine structure on the 2-sphere with 24 singularities. 

Away from a neighbourhood of the singularities, the holomorphic volume form $\Omega_t$ is approximately equal to the flat volume form to order $\bigO$, so 
\[ \int_{p^{-1}(U)\cap C_t^+} \Omega_t = (-\log t)^2 \cdot \vol(U) + \bigO.\]
In a neighbourhood of a singularity, if we throw out terms of order $\bigO$ then the local model for $(Z_t,C_t^+,\Omega_t)$ is
\begin{align*}
Z_t &= \{(y_1,y_2,x) \in \C^2 \times \C^* : y_1 y_2=1+x\},\\
C_t^+ &= Z_t \cap (\R_+)^3,\\
\Omega_t &= \frac{dy_1 \wedge dy_2}{x} =  d\log y_1 \wedge  d\log x
\end{align*}
where $\{y_1y_2 = 0\}$ corresponds to the boundary divisor of $Y_\nabla$ (compare Kontsevich and Soibelman \cite{Kontsevich-Soibelman:affine}). 

\begin{example} 
Let $X \subset \C\bP^3$ be a quartic K3 surface equipped with a  symplectic form $\omega$ in the class $c_1(\C\bP^3)$. The mirror is given by $\Zring_t=\{tW_1+tW_2+tW_3+t/(W_1W_2W_3)=1\}$. The tropical amoeba of $\Zring_t$ is shown in Figure \ref{fig:tropK3}; the $\Log_t$-image of the positive real cycle $C_t^+$ converges to the boundary of the simplex $\Delta_\lambda= \{w_1\ge -1, w_2\ge -1, w_3\ge -1, w_1+w_2+w_3\le 1\}$ as $t\to +0$, where $w_i= \log_t|W_i|$. We cover $\partial \Delta_\lambda$ by affine charts: on the interior of a facet (yellow region), we consider the subspace affine structure, and around a lattice point $v$ on an edge (blue region), we consider the affine structure given by the projection along the ray $\R_+ v$. The singularities of the affine structure occur somewhere between adjacent lattice points on edges. For example, consider the red region in Figure \ref{fig:tropK3}, which lies between the two affine charts $(w_2-w_1, w_2+w_3+1)$, $(w_2-w_1,w_3)$ associated with the rays $\R_+(-1,-1,1)$ and $\R_+(-1,-1,0)$. Since $tW_3=t^{1+w_3}$ and $t/(W_1W_2W_3) = t^{1-w_1-w_2-w_3}$ are exponentially small near the red region, the cycle $C_t^+$ in this region is given by the equation 
\[
tW_1 + t W_2 \approx 1  \quad 
\Longleftrightarrow \quad 
t^{-(w_2-w_1)} + 1 \approx t^{-w_2-1} = t^{-w_2-w_3-1}\cdot  t^{w_3}. 
\]
Setting $x=t^{-(w_2-w_1)}$, $y_1 = t^{-(w_2+w_3+1)}$, 
$y_2=t^{w_3}$, we find that the red region of the cycle is approximated by the positive real locus of the local model $1+x = y_1y_2$ above. Note that $(\log_t x, \log_t y_1)$ and $(\log_t x, \log_t y_2)$ give affine charts of the adjacent blue regions. 
\end{example} 

The base of the SYZ fibration at such a point is a `focus-focus singularity'. 
An approximation to the SYZ fibration $p:Z_t \to B$ can be written down away from the region where both $y_i$ are small. 
The approximation is defined using coordinates $b=-\log_t |x|$, $c_1=-\log_t |y_1|$, $c_2=-\log_t|y_2|$; we have $p \approx (b,c_1)$ away from $y_1=0$ and $p\approx (b,c_2)$ away from $y_2=0$.    
We observe that 
\[
c_1 + c_2 =- \log_t|y_1y_2| = -\log_t|1+x| \approx \max(0,b)
\] 
when $b \gg 0$ or $b \ll 0$, so the transition maps for the approximate SYZ fibration are approximately affine-linear in these regions. 
The fact that these transition maps are different for large and small $b$ accounts for the non-trivial monodromy of the affine structure around the focus-focus singularity. 

\begin{figure}[ht]
\centering
\begin{tikzpicture}
%\node at (-0.4,0.5) {\includegraphics[scale=0.2]{greenfig_vv.pdf}};
\node at (0,0) {\includegraphics[scale=0.45]{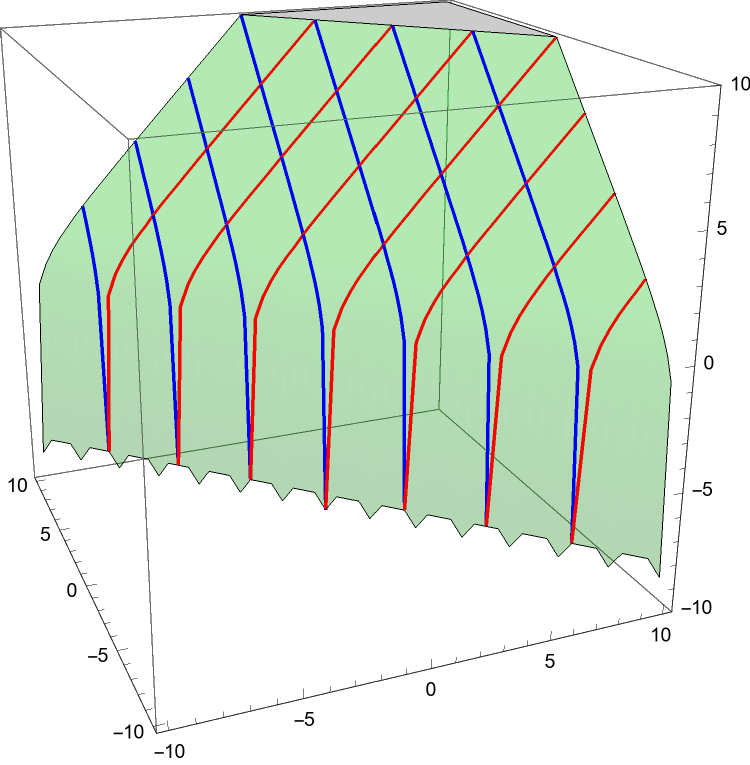}};
\draw[blue,->] (3,1.5) -- (1.7,1.5) -- (1.29,1.22);
\filldraw[blue] (3,1.5) node [fill=white,draw] {\scriptsize $c_1=\text{const}$}; 
\draw[red,->] (-2,2.4)--(-1.52,1.55); 
\filldraw[red] (-2.5,2.6) node [fill=white, draw] {\scriptsize $c_2=\text{const}$}; 
\draw[->] (-2.1,-3.25)--(-0.8,-2.93); 
\draw (-0.7,-3.1) node {\footnotesize $c_1$}; 
\draw[->] (-2.1,-3.25)--(-2.2,-1.25); 
\draw (-2,-1.4) node {\footnotesize $b$}; 
\draw[->] (-2.1,-3.25)--(-2.83,-1.75);
\draw (-3,-1.9) node {\footnotesize $c_2$}; 

\draw[->] (5,0)--(10,0);
\draw[->] (7.5,-3)--(7.5,3); 
\filldraw[opacity=0.3] (6,2)--(6,-2)--(7.5,-2)--(9,-0.5)--(9,2)--cycle;
\draw (10.2,0) node {$b$}; 
\draw (7.5,3.2) node {$c_1$};
\draw (5.8,-0.2) node {$-B$}; 
\draw (9,-0.2) node {$B$}; 
\draw (7.3,2) node {$C_1$}; 
\draw (7.15,-2.07) node {$-C_2$}; 

\filldraw (7.5,0) node {$\times$} ;  

\draw[blue] (5,1) node {$c_1=\text{const}$}; 
\draw[blue] (6,1.6)--(9,1.6);
\draw[blue] (6,1.2)--(9,1.2); 
\draw[blue] (6,0.8)--(9,0.8); 
\draw[blue] (6,0.4)--(9,0.4);  

\draw[red] (5,-1) node {$c_2=\text{const}$}; 
\draw[red] (6,-0.4)--(7.5,-0.4)--(9,1.1); 
\draw[red] (6,-0.8)--(7.5,-0.8)--(9,0.7); 
\draw[red] (6,-1.2)--(7.5,-1.2)--(9,0.3); 
\draw[red] (6,-1.6)--(7.5,-1.6)--(9,-.1); 

\end{tikzpicture} 
\caption{The left picture: the surface  
  $c_1+c_2=-\log_t(1+t^{-b})$ with $t=1/e$. The red and blue coordinate lines show how the approximately-affine charts $(b,c_1)$, $(b,c_2)$ are glued. (This image was produced using the Wolfram Cloud \cite{Wolfram}.) The right picture: the affine manifold with singularity drawn on the $(b,c_1)$-plane (which corresponds to the red region in Figure \ref{fig:tropK3}); this arises from the left picture in the limit $t\to 0$.}

\label{fig:2dim_sing}
\end{figure}

Figure \ref{fig:2dim_sing} shows the hypersurface $C_t^+ = \left\{c_1 + c_2 = -\log_t\left(1+t^{-b}\right)\right\} \subset \R^3$, with the horizontal coordinates corresponding to $c_1$ and $c_2$, and the vertical coordinate to $b$. 
On it we draw the level sets of the coordinates of the SYZ fibration, where each is defined. 
We cut out a region $p^{-1}(U)\cap C_t^+$ where $U$ is a neighbourhood of the singularity in the base of the SYZ fibration. 
It will have boundaries $-B \le b \le B$ for some large $B$; $c_1 \le C_1$ for some large $C_1$, so that $c_1$ is a coordinate of the approximate SYZ fibration along that boundary; and $c_2 \le C_2$ for some large $C_2$, for the same reason. We assume $B\ll C_1+C_2$ so that the boundaries $c_1=C_1$ and $c_2=C_2$ do not intersect. 
We integrate $\Omega_t = (-\log t)^2 db \wedge dc_1$ over $p^{-1}(U) \cap C_t^+$, which means we calculate the area of its projection to the $b$-$c_1$ plane.
This is the area of the region $\{(b,c_1):-B \le b \le B, -\log_t(1+t^{-b}) - C_2 \le c_1 \le C_1\}$, which is clearly
\[
(-\log t)^2 \int_{-B}^B \left( C_1+C_2+\log_t(1+t^{-b})\right) db.
\]
In contrast, the affine volume of $U$ will be
\[ 
(-\log t)^2 \int_{-B}^B \left( C_1+C_2 -\max(0,b) \right) db.
\]
The difference between these two is the contribution of this region to the sub-leading terms of our period integral. 
It is equal to
\begin{align*}
(-\log t)^2 \int_{-B}^B \left( \max(0,b) + \log_t(1+t^{-b}) \right) db 
= - \zeta(2) + O(t^B) 
\end{align*}
as we observed in the Introduction (see Section \ref{subsec:error}).  
Thus we have established that each of the $24$ singular points in the SYZ base (i.e., the codimension-$2$ strata) gives rise to a contribution of $-\zeta(2)$ to the sub-leading term in the period integral. 
These terms sum to
\[-24 \zeta(2) = \int_X \hGamma_2,\]
using the fact that $\hGamma_2 = -c_2(TX) =-24[{\rm pt}]$ for a $K3$ surface, which is the $i=2$ term in the right-hand side of \eqref{eqn:gammai} as promised. 
This completes the sketch proof of Theorem \ref{thm:gamma} in dimension $2$.

The complete proof of Theorem \ref{thm:gamma} that we give in Section \ref{sec:gamma} applies even in situations where $X$ is not smooth but only quasi-smooth, which means (in this two-dimensional case) that some of the singular points in the SYZ base have collided. 
There is a new phenomenon here, which we briefly indicate without going into full details. 

We consider the tropical polynomial 
\[ f_a(b) := \max(-b,a,b),\]
and the leading behaviour of the corresponding `error in tropicalization' integral
\[ I(a,B,t) := (-\log t)^2 \int_{-B}^B \left(f_a(b)+ \log_t \left(t^{b}+t^{-a}+t^{-b} \right) \right) \, db\]
as $t \to +0$, with $a,B$ held fixed and satisfying $|a|\ll B$. 
When $a > 0$, $f_a(b)$ has two bends and is `tropically smooth' at both (i.e., the slope changes by $1$). 
However when $a \le 0$, we have $f_a(b) = \max(-b,b)$ and the two bends have collided into a single bend which is not tropically smooth (the slope changes by $2$). 

This is reflected in the behaviour of the integral: when $a>0$, the two bends in $f_a$ each contribute $-\zeta(2)$ to the leading term of the integral, by the computation of Section \ref{subsec:error}, so $I(a,B,t) = -2\zeta(2)+O(t^\epsilon)$ for some $\epsilon>0$. 
When $a<0$ the terms involving $a$ contribute negligibly to the integral; after dropping these terms, a straightforward manipulation reduces the computation to that of Section \ref{subsec:error}, giving the answer $-\zeta(2)/2 + O(t^\epsilon)$. 
This reflects the fact that, although two separate focus-focus singularities each contribute $-\zeta(2)$ to the period integral, after they collide the contribution is only $-\zeta(2)/2$.

The corresponding local model for $Z_t$ is given by $y_1y_2 = x+t^{-a}+x^{-1}$. The discontinuity of the constant term in the asymptotics of periods can be understood from the fact that the large-complex structure limit of $Z_t$ is different between $a>0$ and $a<0$. 

This collision of two focus-focus singularities in the SYZ base of $Z_t$ is mirror to a degeneration of $X$ so that it acquires an $A_1$ singularity. 
Indeed, a local picture for the development of this $A_1$ singularity is given by the family of toric varieties with moment polytopes $\{(b,c): c \ge f_a(b)\}$ as $a$ passes from positive to negative. 
We consider the effect of this degeneration on the $i=2$ term in the right-hand side of \eqref{eqn:gammai}, which is
\[ \int_X \hGamma_2 = -\zeta(2) \cdot \chi(X).\]
For $a>0$, the local contribution to the Euler characteristic is $2$, from the two toric fixed points; for $a \le 0$ the local contribution is $1/2$, from the single toric fixed point which is an orbifold point of order $2$. 

Thus the effect of the collision of two focus-focus singularities on the period integral, and on the mirror integral \eqref{eqn:gammai}, is the same: $-2\zeta(2)$ gets replaced by $-\zeta(2)/2$. 
A similar phenomenon can be observed with the collision of $k$ focus-focus singularities, replacing $-k\zeta(2)$ with $-\zeta(2)/k$. 

\begin{rmk} 
The asymptotics becomes subtle when $a=0$. 
More generally, we can consider the local model $Z_t$ defined by 
$y_1y_2 = x + c t^{-a} + x^{-1}$. 
For non-zero $a$, the corresponding `error in tropicalization' integral 
does not depend on the coefficient $c$. 
For $a=0$, however, it depends analytically on $c$ as follows: 
\begin{multline*} 
(-\log t)^2 \int_{-\infty}^\infty \left( 
\max(-b,b) + \log_t(t^b+c+t^{-b})\right) db 
 = -2 \int_0^1 \log(x^2+ c x + 1)  \frac{dx}{x} \\
 = \arcsin^2\left(\frac{c}{2}\right) - \pi 
\arcsin\left(\frac{c}{2}\right) - \frac{\pi^2}{12}.  
\end{multline*}  
It is interesting to note that this gives $-2 \zeta(2)$ for $c=2$. 
The point $(a,c)=(0,2)$ is the so-called conifold point 
in the complex moduli space, and should be mirror to a smoothing $T^*S^2$ 
of the $A_1$-singularity. 
The value $-2\zeta(2)$ can be interpreted as $-\chi(T^*S^2)\zeta(2)$. 
\end{rmk}

\subsection{Threefolds}

Now we consider the case where $Z_t$ is 3-dimensional. 
In this case there is again an SYZ fibration $p: Z_t \to B$ with $B \cong \partial \Delta_\lambda$, but the singular locus is more complicated: it generically consists of a trivalent graph lying inside the codimension-$1$ locus of $\partial \Delta_\lambda$ \cite{Gross2001,Gross2013,WDRuan:Lag_fibration_survey}, and there are two types of vertices:  
those lying in the interior of a codimension-$1$ face, with the three incident edges all lying in the same face (which we will call `type I'); and those lying at the intersection of three codimension-$1$ faces, with the three incident edges all lying in different faces (which we will call `type II'). 

\subsubsection{The edges} 
\label{sec:edges}

Along an edge of the singular locus, the SYZ fibration is a product of the two-dimensional case previously considered with an $S^1$-fibration over an interval. 
Thus the integral along the edges should contribute $-\zeta(2)\cdot (-\log t) \cdot (\text{total length of edges})$, which comes out equal to 
\[ -\zeta(2) \cdot (-\log t) \cdot \int_X \omega_\lambda \cdot c_2(TX) = \int_X t^{-\omega_\lambda} \cdot \hGamma_2\]
as required ($c_2(TX)$ is represented by the singular locus of the fibration; see Gross  \cite[Theorem 2.17]{Gross2001}). 

\subsubsection{Type I vertex: $y_1y_2y_3=1+x$}
\label{sec:typei}

The local model near a type I vertex is 
\begin{align*}
Z_t &= \{(y_1,y_2,y_3,x)\in \C^3\times \C^*: y_1 y_2 y_3 = 1+x\}\\
C_t^+ &= Z_t \cap (\R_+)^4\\
\Omega_t &= \frac{dy_1 \wedge dy_2 \wedge dy_3}{x} =  d\log x \wedge d\log y_1 \wedge d\log y_2,
\end{align*}
where the boundary divisor of $Y_\nabla$ corresponds to $\{y_1y_2y_3=0\}$.

The SYZ fibration is approximated using coordinates $b=-\log_t|x|$ and $c_i = -\log_t |y_i|$ as before.  
We set $p \approx (b,c_2,c_3)$ away from $y_1 = 0$, $p \approx (b,c_1,c_3)$ away from $y_2=0$, and $p \approx (b,c_1,c_2)$ away from $y_3=0$. 
Observe that $c_1+c_2+c_3 \approx \max(0,b)$ away from $b \approx 0$ as before, so once again the transition maps are affine-linear away from this area. 
The region $p^{-1}(U)\cap C_t^+$ will be cut out by inequalities $-B \le b \le B$, $c_i \le C_i$ as before, and we must calculate its projection to the $b$-$c_1$-$c_2$-plane. 
The projection to the $b$-$c_1$-$c_2$ is cut out by inequalities
\[
-B \le b\le B,\quad c_1 \le C_1, \quad c_2 \le C_2, \quad c_1+c_2 \ge -\log_t(1+t^{-b}) - C_3.
\]
We assume $B\ll C_1+C_2+C_3$ to ensure that the fibre of this region over $b\in [-B,B]$ is nonempty. 
The fibre of this region over $b \in [-B,B]$ is a right-angle isosceles triangle whose sidelengths are easily calculated, which gives the total volume of the region as
\[ 
(-\log t)^3\int_{-B}^B \frac{\left(C_1+C_2+C_3 +  \log_t(1+t^{-b})\right)^2}{2} db.
\]

As before, we need to subtract off the affine volume of the region, which is 
\[ 
(-\log t)^3 \int_{-B}^B \frac{\left(C_1+C_2+C_3 - \max(0,b)\right)^2}{2} db.
\]
However even after subtracting off this volume, we will still get a divergent integral as $t$ goes to $+0$. 
That is because of the contributions from the edges of the discriminant locus: the three edges each contribute a term 
\[ -(-\log t)\cdot C_i \cdot  \zeta(2) \approx (-\log t)^3 \cdot C_i \cdot \int_{-B}^B \left(\max(0,b) +\log_t(1+t^{-b}) \right) db
\]
to the integral. 
When we subtract off these contributions from the legs, we end up with the contribution which arises solely from the vertex of the discriminant locus, which is given by the integral
\[ 
(-\log t)^3 \int_{-B}^B \frac{\left(-\log_t(1+t^{-b})\right)^2 - \max(0,b)^2}{2} db = \zeta(3) + O(t^\epsilon). 
\]
We shall prove this later, see \eqref{eq:zeta_n}.  

\subsubsection{Type II vertex: $y_1y_2 = 1+x_1+x_2$}
\label{sec:typeii}

The local model near a type II vertex is
\begin{align*}
Z_t &= \{(y_1,y_2,x_1,x_2)\in \C^2\times (\C^*)^2:y_1y_2 = 1+ x_1+x_2\} \\
C_t^+ &= Z_t \cap (\R_+)^4\\
\Omega_t &= \frac{dy_1 \wedge dy_2 \wedge dx_1}{x_1x_2} =  d\log x_1 \wedge d\log x_2 \wedge d\log y_1,
\end{align*}
where the boundary divisor of $Y_\nabla$ corresponds to $\{y_1y_2=0\}$.

The SYZ fibration is approximated using coordinates $b_i =-\log_t |x_i|$ and $c_i = -\log_t |y_i|$. 
The region $p^{-1}(U)\cap C_t^+$ will be cut out by $(b_1,b_2) \in V$ for some region $V \subset \R^2$ enclosing the origin, together with $c_i \le C_i$, and we must calculate its projection to the $b_1$-$b_2$-$c_1$-plane. We assume that $\max(0,b_1,b_2)\ll C_1+C_2$ for $(b_1,b_2)\in V$. 
We find that this area is equal to 
\[ 
(-\log t)^3 \int_V \left(C_1+C_2 + \log_t(1+t^{-b_1}+t^{-b_2})\right) db_1 db_2.
\]
Once again we subtract off the affine volume of the region, leaving
\[ 
(-\log t)^3\int_V \left( \max(0,b_1,b_2) + \log_t(1+t^{-b_1} + t^{-b_2}) \right) db_1 db_2.
\]
Next we need to subtract off the sum of the contributions from the edges of the discriminant locus, which is equal to $\zeta(2)$ multiplied by the total length $L$ of the standard tropical line $\Sing(\max(0,b_1,b_2))$ contained in the region $V$. 
We shall show in Proposition \ref{prop:asymptotics_of_integral} that  
\begin{align}
\label{eqn:typeII}
\begin{split}  
\lim_{t\to +0} \left[  (-\log t)^3 \int_{V} \right.  &\left(\max(0,b_1,b_2) + \log_t(1+t^{-b_1} + t^{-b_2}) \right)db_1 db_2  \\
 & \quad + \zeta(2)\cdot (-\log t) \cdot L \bigg] = -\zeta(3)
\end{split}
\end{align}
so the contribution of a Type II vertex in the discriminant locus to the overall integral is $-\zeta(3)$.

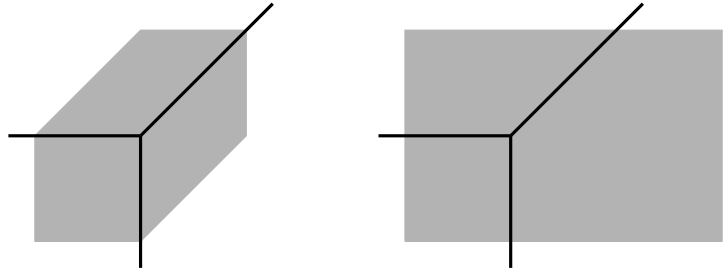
\begin{figure}[ht]
\centering 
 
\begin{tikzpicture}[x=1pt, y=1pt] 
\filldraw[opacity=0.3] (110,10)--(110,50)--(150,90)--(190,90)--(190,50)--(150,10)--(110,10);
\draw[very thick] (150,50)--(200,100); 
\draw[very thick] (150,50)--(100,50);
\draw[very thick] (150,50)--(150,0);

\begin{scope}[xshift=140]
\filldraw[opacity=0.3] (110,10)--(110,50)--(110,90)--(230,90)--(230,10)--(110,10);

\draw[very thick] (150,50)--(200,100); 
\draw[very thick] (150,50)--(100,50);  
\draw[very thick] (150,50)--(150,0);  
\end{scope} 

\end{tikzpicture} 

\caption{Regions $V$ with kinks (left) and without kinks (right) along the tropical line.} 
\label{fig:kink} 
\end{figure} 

\begin{rmk}
\label{rmk:kinks}  
There is an important issue which we have glossed over in this computation: in order for \eqref{eqn:typeII} to hold, the boundary of the region $V$ should be smooth and transverse to the edges of the discriminant locus (i.e., the legs of the tropical line) where it crosses them. 
For example, if one takes the region $V$ shown in the left side of Figure \ref{fig:kink}, the value of the integral \eqref{eqn:typeII} will be equal to $5\zeta(3)/4$ (see the proof of Proposition \ref{prop:asymptotics_of_integral}). 
Some of the contribution of the vertex is `hiding in the kinks in the boundary of $V$' in this case. 
It turns out that in higher dimensions, the local contribution is even more strongly dependent on the shape of the region $V$. 
For example it is not enough that $V$ have no `kinks' where it crosses the discriminant locus: in dimensions $\ge 4$ the integral may in general depend on the angle at which $V$ intersects the singular locus. We have not found a way to organize these choices efficiently.
In Section \ref{sec:periodR} we take the more pedestrian approach of decomposing the cycle into pieces in a completely canonical way, at the cost of leaving certain `kinks' in the pieces which result in a formula which is less visibly `local' in the base of the SYZ fibration. 
\end{rmk}

\subsubsection{Proof of Theorem \ref{thm:gamma} in dimension 3}

We can piece together a sketch proof of Theorem \ref{thm:gamma} in dimension $3$ from the pieces we have assembled. 
In Section \ref{sec:leading} we have seen that the codimension-$0$ strata of the base of the SYZ fibration contribute the $i=0$ term on the right-hand side of \eqref{eqn:gammai}; in Section \ref{sec:edges} we have seen that the edges (codimension-$2$ strata) contribute the $i=2$ term; it remains to see how the type I and type II vertices contribute the $i=3$ term. 
It is clear that their contribution is
\[ \left( \#(\text{type I vertices}) - \#(\text{type II vertices})\right) \cdot \zeta(3),\]
which we must show is equal to
\[ \int_X \hGamma_3 = -2\zeta(3) \cdot \int_X \ch_3(TX) = -\zeta(3) \cdot \int_X c_3(TX) = -\zeta(3) \cdot \chi(X). \]
The answer now follows from the observation that in the stratification of $X$ according to singularities of the SYZ fibration, each stratum has an $S^1$ factor and therefore vanishing Euler characteristic except for those lying over the vertices of the discriminant locus. 
The mirror to a type I SYZ fibre is a type II SYZ fibre, which has Euler characteristic $-1$; whereas the mirror to a type II SYZ fibre is a type I SYZ fibre, which has Euler characteristic $+1$. 
Therefore we have
\[ \chi(X) =  \#(\text{type II vertices}) - \#(\text{type I vertices}),\]
 which completes the sketch of a proof.

\begin{example}
The SYZ fibration on the quintic threefold has $\binom{5}{3} \cdot 5 = 50$ vertices of type I and $\binom{5}{2} \cdot 5^2 = 250$ vertices of type II. 
The Euler characteristic of the mirror quintic is $250-50 = 200$, as one clearly sees from its Hodge diamond \cite{CdlOGP:pair}.
\end{example}

\section{Proof of Theorem \ref{thm:periodR}}
\label{sec:periodR}
In this section we prove Theorem \ref{thm:periodR}. We break up the mirror period integral into pieces corresponding to a 
polyhedral decomposition of $\partial \Delta_\lambda$, which is the limit shape of $C_t^+$. Then we express each piece in terms of integrals over the toric variety $Y_{\Delta_\lambda}$ by applying the Duistermaat--Heckman theorem.   

\subsection{Tropical setup}
\label{subsec:tropical_setup} 

Consider the affine functions
\begin{align*}
\beta_q\colon P_\R  \to \R,  \qquad 
\beta_q(p) = \langle q,p \rangle + \lambda_q,
\end{align*}
as well as the map
\begin{align*}
i_t\colon P_\R \to P_{\C^*}, \qquad 
i_t(p_0,\dots,p_n) = \left(t^{p_0},\dots,t^{p_n}\right)
\end{align*}
for $t \in \R_+$, which is right-inverse to $\Log_t$.
If we define 
\[B_t := \left\{ 1 = \sum_{q \in V} t^{\beta_q}\right\} \subset P_\R,\]
then it is clear that $i_t(B_t) = C_t^+$. 
Therefore
\[ \int_{C_t^+} \Omega_t = \int_{B_t} i_t^* \Omega_t.\]

We observe that
\[ 
\log_t\left|t^{\lambda_q} \cdot z^q\right| = \beta_q(\Log_t(z)) 
\]
so $\beta_q$ is the `tropical monomial' corresponding to the honest monomial $t^{\lambda_q} \cdot z^q$. 
As a result, in the limit $t \to 0$, the \emph{amoeba} $\Log_t(\Zring_t)$  converges to the \emph{tropical amoeba} $\Sing(\min(0, \{\beta_q\}_{q\in V}))$, the non-smooth locus of the piecewise affine-linear function $\min(0,\{\beta_q\}_{q\in V})$ on $P_\R$  \cite{Mikhalkin2004}.
We observe that the unique compact component of the complement of the tropical amoeba is precisely the polytope $\Delta_\lambda$ that appears on the $A$-side of our mirror statement. In the limit $t\to 0$, $B_t$ converges to the boundary $\partial\Delta_\lambda$ of the polytope.  

\subsection{Decomposing the domain}
\label{sec:decomposing-domain}

We now decompose the domain $B_t$ of our period integral into regions where the different monomials dominate. 

\begin{figure}[ht]
\centering
\begin{tikzpicture}[x=1pt, y=1pt]

\filldraw[gray!30!white] (148,30)--(144,36)--(187,36)--(181,30)--(148,30); 
\filldraw[gray] (148,30)--(144,36) -- (136,36) -- (140,30) -- (148,30); 

\draw[very thick] (100,90)--(200,90)--(220,60)--(190,30)--(140,30)--(100,90);
\draw[very thick] (233,25)--(196,-12)--(124,-12)--(80,55);
\draw[very thick] (100,90)--(80,55); 
\draw[very thick] (220,60)--(233,25);
\draw[very thick] (190,30)--(196,-12);
\draw[very thick] (140,30)--(124,-12); 

\draw[dotted] (200,90)--(213,55); 
\draw[dotted] (80,55)--(213,55)--(233,25); 

\draw[very thin] (148,30)--(108,90);
\draw[very thin] (136,36)--(196,36);
\draw[very thin] (104,84)--(204,84);
\draw[very thin] (193,90)--(215,55);
\draw[very thin] (181,30)--(216,65);

\draw (160,62) node {face $q$}; 
\draw (165,10) node {face $k$};  
\draw (115,39) node {\rotatebox{-56}{face $j$}};

\end{tikzpicture}

\caption{Decomposition of the cycle $B_t$. The cycle $B_t$ approaches to the boundary of the polytope $\Delta_\lambda$ as $t\to +0$. The light grey region is the limit of $B_t^{q,\{k\}}$ and the dark grey region is the limit of $B_t^{q,\{k,j\}}$; these pieces $B_t^{q,K}$ can be presented as graphs over the shaded regions.}
\label{fig:piece}
\end{figure}
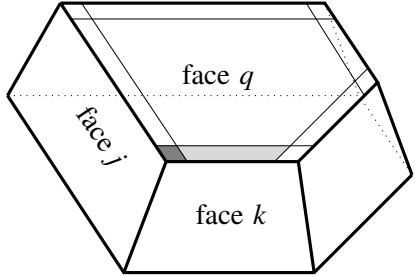 

We cover $P_\R$ with the sets 
\[ U^q := \left\{p \in P_\R: \beta_i(p) \ge \beta_q(p) \text{ for all $i \in V$}\right\}\]
for $q \in V$. Thus we can cover $B_t$ with the sets $B^q_t := U^q \cap B_t$. 
In the limit $t \to 0$, $B_t^q$ converges to the $q$th face of $\Delta_\lambda$.

The above cover is well-adapted to consider tropical limits, but our analysis of sub-leading terms requires a further decomposition.  
Let us fix $\epsilon >0$, and for each $\{q\} \sqcup K \subset V$ set
\[ 
U^{q,K} := \left\{ p \in P_\R : 
\begin{aligned}
\beta_k(p) - \beta_q(p) & \in [0,\epsilon] & &\text{for $k\in \{q\}\cup K$} \\ 
\beta_m(p )- \beta_q(p) &\in [\epsilon, \infty) & & \text{for $m\in V\setminus (\{q\}\cup K)$}
\end{aligned}
\right\} 
\]
In words, $U^{q,K}$ is the region where the tropical monomial $\beta_q$ is smallest (hence `dominates') and the tropical monomials $\{\beta_k\}_{k \in K}$ are not far behind.

We observe that $U^q$ is covered by the sets $U^{q,K}$. 
Then we obtain a cover $B_t^{q,K} := U^{q,K} \cap B_t$ (see Figure \ref{fig:piece}). 
So our period integral is equal to
\[ \sum_{q,K} \int_{B_t^{q,K}} i_t^* \Omega_t.\]
If we choose $\epsilon>0$ small enough, then $B_t^{q,K}$ is nonempty for sufficiently small $t>0$ if and only if the facets $\{\beta_i=0\}\cap \Delta_\lambda$ with $i \in\{q\}\sqcup K$ have nonempty intersection,  or equivalently, $\{q\}\sqcup K$ spans a cone of the fan $\Sigma_\lambda$. Starting in the next section, and going through the end of Section \ref{sec:duistermaat-heckman}, we will restrict to pairs $(q,K)$ such that the facets corresponding to $\{q\}\sqcup K$ intersect.

\subsection{Approximation in each region}
\label{sec:approx_each_region} 
Let us consider the integral over $B_t^{q,K}$. 
Observe that 
\begin{align}
\label{eq:f_t_on_region}
f_t(z) &= t^{\lambda_q}z^q \left (1 + \sum_{k\in K} t^{\lambda_k-\lambda_q} z^{k-q} + h_t(z)\right),
\end{align}
where
\[ 
h_t(z) = \sum_{j\in V \setminus (\{q\}\sqcup K)} t^{\lambda_j-\lambda_q} \cdot z^{j-q}.
\]
We observe that over $i_t(B_t^{q,K})$, we have
\begin{equation}
\label{eqn:negl} h_t, \ z_i\frac{\partial h_t}{\partial z_i} \in \bigO
\end{equation}
because each contributing monomial is so. 
The idea for approximating the integral over $B_t^{q,K}$ is to `throw away' these negligible terms.

In order to evaluate the integral over the region $B_t^{q,K}$, we introduce an affine coordinate system $(a,b_k,c_j)$ on $P_\R$: 
\begin{align*}
a &= \beta_q \\
b_k &= \beta_k - \beta_q \quad\text{for $k \in K$} \\
\{c_j\} &= \text{a collection of integral linear functions completing a coordinate system}.
\end{align*}
The fact that it is possible to complete $\{a,b_k\}$ to a coordinate system follows from our assumption that the fan $\Sigma_\lambda$ is simplicial and that the facets corresponding to $\{q\}\sqcup K$ intersect. We also write 
\[
r_{q,K} \cdot da \prod_{k\in K} db_k \prod_j dc_j = \text{the standard affine volume form on $P_\R$}
\]
for some factor $r_{q,K}>0$ and set 
\begin{equation} 
\label{eq:residual_vol} 
d\vol_{q,K}= r_{q,K} \prod_{j} dc_j 
\end{equation} 
for the residual volume form on the $c$-plane.  When $a$, $b_k$ already form a coordinte system and there are no $c$ variables, we regard $d\vol_{q,K}$ as a measure on the point $\{0\}=\R^0$. Note that this is different from the affine volume form induced on a subspace of the form $\{a=\text{const}, b_k= \text{const}\}$ unless the covectors $da=q$, $db_k=k-q$ are part of a $\Z$-basis of $P^\vee$. 

We introduce the corresponding monomials on $P_{\C^*}$:
\begin{align*}
w &= t^{\lambda_q} \cdot z^q \\
x_k &= t^{\lambda_k - \lambda_q} \cdot z^{k-q} \quad \text{ for $k \in K$} \\
y_j &= z^{c_j}.
\end{align*}
In these coordinates we have
\[ 
f_t(w,x,y) = w \cdot \left( 1+\sum_{k \in K} x_k + h_t(x,y,w)\right).
\]
By the definition of $r_{q,K}$, the standard holomorphic volume form 
on $P_{\C^*}$ is given by 
$r_{q,K}\cdot d\log w\wedge \bigwedge_{k\in K}
d\log x_k \wedge \bigwedge_j d\log y_j$. 
Thus the volume form of $\Zring_t$ is
\begin{align}
\label{eq:Omega_in_xy} 
\begin{split}
\Omega_t &= r_{q,K} \cdot \left. \frac{d\log w \wedge \bigwedge_{k\in K}d\log x_k \wedge \bigwedge_j d\log y_j}{df_t(w,x,y)} \right|_{\Zring_t} \\
&= r_{q,K}\cdot \left. \frac{\bigwedge_{k\in K}d\log x_k \wedge \bigwedge_j d\log y_j}{w\cdot  (\partial f_t(w,x,y)/\partial w)} \right|_{\Zring_t}  
\end{split} 
\end{align}
in the region where the denominator does not vanish. 

On $C_t^+=i_t(B_t)=\{z\in P_{\R_+} : 1=f_t(z)\}$, we have
\begin{align*}
\label{eq:estimate_wf_w}
w \cdot \frac{\partial f_t}{\partial w} (w,x,y)
&= f_t(w,x,y) + w^2 \frac{\partial h_t}{\partial w}(w,x,y) && \\ 
& = 1 + \bigO && \text{over $i_t(B_t^{q,K})$}
\end{align*}
where we used \eqref{eqn:negl} and the fact that $0<w<f_t(x,y,w)=1$ on $C_t^+$.  
Therefore we have 
\begin{equation*} 
\Omega_t = \left( 1 + \bigO \right) r_{q,K} \cdot \bigwedge_{k\in K}d\log x_k \wedge \bigwedge_j d\log y_j, 
\end{equation*} 
so 
\begin{align}
\label{eq:Omega_in_bc}
\begin{split} 
\int_{B_t^{q,K}} i_t^* \Omega_t & = \left( 1 + \bigO \right) \cdot r_{q,K}\cdot \int_{B_t^{q,K}} \bigwedge_{k\in K} d\log\left(t^{b_k}\right) \wedge \bigwedge_j d\log\left(t^{c_j}\right)\\
&= \left( 1 + \bigO \right) \cdot (-\log t)^n \cdot  \vol\left(\pi_{b,c}\left(B_t^{q,K}\right)\right),
\end{split} 
\end{align}
where $\pi_{b,c}$ denotes the projection to the $(b,c)$-plane and $\vol$ denotes the volume with respect to the product of $\prod_{k\in K} db_k$ and the residual volume form $d\vol_{q,K}$ in \eqref{eq:residual_vol}. 
See the remark below for the reason why $(-\log t)^n$ instead of $(\log t)^n$  
	appears in the last expression.

\begin{rmk} 
\label{rmk:sign_orientation} 
We have been vague about how we choose an order of the coordinates $a$, $b_k$, $c_j$ (or $w$, $x_k$, $y_j$) and an orientation of the cycle $B_t$; strictly speaking we need them to define $\Omega_t$ and the integral. For convenience, we shall always arrange these choices so that $i_t^*\Omega_t$ defines a positive measure (density) on $B_t$. Note that the factor $(-\log t)^n$ appearing in the above formula is positive since $(-\log t)>0$ for a sufficiently small $t$. 
\end{rmk} 

\subsection{Approximation in terms of volumes of polytopes} 
 \label{sec:approx_vol_polytope}
We now approximate the affine volume of $\pi_{b,c}(B_t^{q,K})$ in terms of the volumes of certain polytopes. 

On $B_t^{q,K}$, the defining equation can be rewritten as 
\begin{align}
\label{eq:defining_eqn_Bt}
1 = \sum_p t^{\beta_p} \iff 1 = t^a \cdot \bigg(1+\sum_{k \in K} t^{b_k} + \underbrace{\sum_{m \in V \setminus (\{q\} \sqcup K)} t^{\beta_m-a}}_{O(t^\epsilon)}\bigg) ,
\end{align}
which can be used to write $a$ as a function $a_{q,K}(b,c)$ of the variables $b_k,c_j$. 
We observe that we have the approximation
\begin{equation}
\label{eq:approx_aqK}
a_{q,K}(b,c) = a'_{q,K}(b) + \bigO \quad \text{ where} \quad 
a'_{q,K}(b) := -\log_t\left(1+\sum_{k \in K} t^{b_k} \right).
\end{equation}

Now $B_t^{q,K}$ is defined by the inequalities
\begin{align*}
b_k & \in [0,\epsilon] && \text{ for $k \in K$} \\
\beta_m(a,b,c) - a & \in [\epsilon,\infty)&& \text{ for $m \notin \{q\} \sqcup K$}, \\
\intertext{which means the region $\pi_{b,c}\left(B_t^{q,K}\right)$ is defined by the inequalities}
b_k & \in [0,\epsilon] && \text{ for $k \in K$} \\
\beta_m(a_{q,K}(b,c),b,c) - a_{q,K}(b,c) & \in [\epsilon,\infty) && \text{ for $m \notin \{q\}\sqcup K$}.
\end{align*}
 
We will consider the fibres $F_{q,K}(b)$ of the projection
\[ \pi_b: \pi_{b,c}\left(B_t^{q,K}\right) \to [0,\epsilon]^K.\]
It is clear that 
\[ \vol\left(\pi_{b,c}\left(B_t^{q,K}\right)\right) = \int_{[0,\epsilon]^K} \vol\left(F_{q,K}(b)\right) db,\]
where we use the volume form $d\vol_{q,K}$ \eqref{eq:residual_vol} on the $c$-plane to define $\vol(F_{q,K}(b))$, 
so our next project is to approximate the volume of the fibres $F_{q,K}(b)$. 
We claim that 
\[ 
\vol(F_{q,K}(b)) = \vol(F'_{q,K}(a'_{q,K}(b),b)) + \bigO
\]
where $F'_{q,K}(a,b)$ is the compact polytope in the $c$-plane defined by 
\[
\beta_m(a,b,c) - a  \in [\epsilon,\infty) \quad \text{ for $m \notin \{q\}\sqcup K$}
\]
with fixed $(a,b)$. 
Indeed, this follows because $F_{q,K}(b)$ can be sandwiched between two perturbations of the compact polytope $F'_{q,K}(a'_{q,K}(b),b)$ where the facets have been shifted by quantities of order $\bigO$.

We have succeeded in approximating the volume of $\pi_{b,c}(B_t^{q,K})$ in terms of 
the volumes of the polytopes $F'_{q,K}(a,b)$, but we would prefer to work with the volumes of the polytopes $E_{q,K}(a,b)$ defined by
\[
\beta_m(a,b,c) - a  \ge 0 \quad \text{ for $m \notin \{q\} \sqcup K$}
\]
with fixed $(a,b)$. 
We shall regard $F'_{q,K}(a,b)$ and $E_{q,K}(a,b)$ either as polytopes in the $c$-plane or as subsets of $P_\R$ with the values of $(a,b)$ fixed. 
More generally, for any subset $K' \subset V$ not containing $q$, we write 
$E_{q,K'}(a,\{b_k\}_{k\in K'})$ for the polytope in $P_\R$ defined by 
\[
\beta_m - a \ge 0 \quad \text{ for $m\notin \{q\} \sqcup K'$}
\]
with the values of $a=\beta_q$ and $b_k =\beta_k - \beta_q$ (with $k\in K'$) fixed.
We have 
\[
F'_{q,K}(a,b)=E_{q,K}(a,b) \setminus \bigcup_{j\notin \{q\}\sqcup K} \bigcup_{b_j \in [0,\epsilon]} E_{q,K\sqcup\{j\}}(a,b,b_j).
\]
This volume can be computed by the inclusion-exclusion principle: noting that 
\[
\bigcap_{j\in I} \bigg(\bigcup_{b_j \in [0,\epsilon]} E_{q, K\sqcup\{j\}}(a,b,b_j)\bigg)
= \bigcup_{b'\in [0,\epsilon]^I} E_{q,K\sqcup I}(a,b,b') 
\] 
for $I$ disjoint from $\{q\}\sqcup K$, we obtain 
\[ 
\vol(F'_{q,K}(a,b)) =  \sum_{J : J \supset K, q\notin J} (-1)^{|J \setminus K|} \cdot \int_{[0,\epsilon]^{J \setminus K} } \vol(E_{q,J}(a,b,b')) db' 
\]
where we write $b=(b_k)_{k\in K}$, $b'=(b_j)_{j\in J\setminus K}$, and use the volume form $d \vol_{q,J}$ to define $\vol(E_{q,J}(a,b,b'))$. 
This means our period integral becomes 
\begin{align}
\label{eqn:perH} 
\begin{split}
\int_{C_t^+} \Omega_t  & = \left(1+\bigO\right) (-\log t)^n \left(\sum_{q,K,q\notin K}  \int_{[0,\epsilon]^K} \vol\left(F'_{q,K}\left(a'_{q,K}(b),b\right)\right) db + \bigO\right) \\
&= \left(1+\bigO\right)  (-\log t)^n  \\
& \qquad \times \left(\sum_{q,K \subset J, q\notin J} (-1)^{|J \setminus K|} \int_{[0,\epsilon]^J} \vol\left(E_{q,J}\left(a'_{q,K}(b),b,b'\right)\right) db db' + \bigO \right).  
\end{split}
\end{align}

\subsection{Duistermaat--Heckman} 
\label{sec:duistermaat-heckman}

We apply the Duistermaat--Heckman theorem to express the volumes of polytopes in \eqref{eqn:perH} as symplectic volumes.

\begin{lem} 
\label{lem:DH}
For positive, sufficiently small $a$ and $b_j$ with $j\in J$, we have 
\[ 
\vol(E_{q,J}(a,b)) = \int_{Y_{\Delta_\lambda}} \exp\left(\omega_\lambda - \sum_{j \in J} b_j \cdot D_j -a  \cdot \sigma \right) \cdot D_q \cdot \prod_{j \in J} D_j ,
\]
where $D_j \subset Y_{\Delta_\lambda}$ denotes the toric divisor corresponding to the $j$th facet $\{\beta_j=0\}\cap \Delta_\lambda$ of $\Delta_\lambda$, and $\sigma:=\sum_{j \in V} D_j$. 
The right-hand side vanishes when the facets corresponding to the elements  of $\{q\} \sqcup J$ do not intersect. 
\end{lem}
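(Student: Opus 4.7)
\medskip

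\noindent\textbf{Proof plan.} The strategy is to differentiate the Duistermaat--Heckman identity with respect to parameters that shift the facets of $\Delta_\lambda$. Since $\Sigma_\lambda$ is simplicial, $Y_{\Delta_\lambda}$ is a projective toric orbifold, and the orbifold Duistermaat--Heckman theorem gives, for $s=(s_i)_{i\in V}\in \R^V$ sufficiently small,
\[
\int_{Y_{\Delta_\lambda}} \exp\Bigl(\omega_\lambda + \sum_{i\in V} s_i D_i\Bigr) = \vol(\Delta_{\lambda+s}),
\qquad \Delta_{\lambda+s} := \{p\in P_\R : \beta_i(p)+s_i\ge 0,\ \forall i\in V\},
\]
where the right-hand side uses the standard affine volume form on $P_\R$.

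Differentiating this identity once with respect to $s_q$ and once with respect to each $s_j$, $j\in J$, the left-hand side becomes
\[
\int_{Y_{\Delta_\lambda}} \exp\Bigl(\omega_\lambda + \sum_{i\in V} s_i D_i\Bigr)\cdot D_q\cdot \prod_{j\in J} D_j.
\]
To interpret the derivative of $\vol(\Delta_{\lambda+s})$, switch to the coordinate system $(a,\{b_k\}_{k\in J},\{c_j\})$ of Section \ref{sec:approx_each_region}. The defining inequalities of $\Delta_{\lambda+s}$ read $a+s_q\ge 0$, $a+b_k+s_k\ge 0$ for $k\in J$, and $\beta_m(a,b,c)+s_m\ge 0$ for $m\notin\{q\}\sqcup J$, while the standard volume form on $P_\R$ equals $r_{q,J}\,da\wedge\bigwedge_{k\in J}db_k\wedge\bigwedge_j dc_j$. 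Thus $\partial/\partial s_q$ extracts the face $a=-s_q$, and successive $\partial/\partial s_j$'s pin down $b_j=-s_j-s_q$, leaving
\[
\frac{\partial^{|J|+1}}{\partial s_q\,\prod_{j\in J}\partial s_j}\vol(\Delta_{\lambda+s}) = r_{q,J}\int \prod_{j} dc_j = \vol\bigl(E_{q,J}(-s_q,\{-s_j-s_q\}_{j\in J})\bigr),
\]
with respect to the residual volume form $d\vol_{q,J}$ from \eqref{eq:residual_vol}.

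It remains to specialize: take $s_i=-a$ for all $i\in V\setminus J$ (in particular $s_q=-a$), and $s_j=-a-b_j$ for $j\in J$. Then $\omega_\lambda+\sum_i s_i D_i = \omega_\lambda-\sum_{j\in J} b_j D_j - a\sigma$, and the right-hand side becomes $\vol(E_{q,J}(a,b))$, yielding the claimed identity. The vanishing statement when the facets of $\{q\}\sqcup J$ do not intersect is immediate on the cohomological side: the primitive ray generators $\{q\}\sqcup J$ then fail to span a cone of the simplicial fan $\Sigma_\lambda$, so $D_q\cdot\prod_{j\in J} D_j = 0$ in $H^*(Y_{\Delta_\lambda})$; it also matches the polytope side, since $E_{q,J}(a,b)$ is empty in that case for small $(a,b)$.

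The only delicate point is tracking the normalization factor $r_{q,J}$: because $(a,\{b_k\},\{c_j\})$ is generally not a $\Z$-basis of $P^\vee$, the standard affine volume form on $P_\R$ differs from $da\wedge\bigwedge db_k\wedge\bigwedge dc_j$ by this factor, and it is precisely $r_{q,J}$ that reappears from the differentiation to reconstitute the residual volume form $d\vol_{q,J}$ used to define $\vol(E_{q,J}(a,b))$. The rest of the argument is a bookkeeping exercise once the Duistermaat--Heckman identity is in hand.
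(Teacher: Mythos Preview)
Your argument is correct, and arrives at the same identity by the same underlying mechanism (Duistermaat--Heckman), but organizes the computation differently from the paper. The paper recognizes $E_{q,J}(a,b)$ directly as the face of a perturbed polytope $\Delta_{\lambda'}$ (with $\lambda'_j=\lambda_j-b_j-a$ for $j\in J$ and $\lambda'_m=\lambda_m-a$ otherwise) and then cites Guillemin's theorem that the affine volume of a face equals the symplectic volume of the corresponding toric stratum $D_q\cap\bigcap_{j\in J}D_j$ with respect to $\omega_{\lambda'}$. You instead start from the top-dimensional identity $\vol(\Delta_{\lambda+s})=\int_{Y_{\Delta_\lambda}}\exp(\omega_\lambda+\sum_i s_iD_i)$ and extract the face volume by repeated differentiation in the shift parameters. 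The two routes are equivalent; yours has the virtue of needing only the full-polytope Duistermaat--Heckman formula and making the appearance of the residual volume form $d\vol_{q,J}=r_{q,J}\prod_j dc_j$ transparent, while the paper's is shorter once one is willing to quote the face version. Your treatment of the normalization factor $r_{q,J}$ also matches the paper's remark on the generic stabilizer.

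One small slip: after differentiating, the slice values are $a=-s_q$ and $b_j=s_q-s_j$ (not $-s_j-s_q$ as written). Your subsequent specialization $s_q=-a$, $s_j=-a-b_j$ is consistent with the correct expression $b_j=s_q-s_j$ and gives the right answer, so this is a typographical error rather than a gap.
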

\begin{proof}
We use the Duistermaat--Heckman theorem to identify the volume of $E_{q,J}(q,b)$ with the symplectic volume of a toric subvariety of $Y_{\Delta_\lambda}$. 
The polytope $E_{q,J}(a,b)$ is defined by
\begin{align*}
\beta_q &= a \\
\beta_j - a &= b_j && \text{for $j \in J$} \\
\beta_m - a & \ge 0 && \text{for $m \in V\setminus (\{q\}\sqcup J)$},\\
\intertext{which is equivalent to}
\beta_q - a & =0 \\
\beta_j - b_j - a &= 0 && \text{for $j \in J$} \\
\beta_m - a  & \ge 0 && \text{for $m \in V\setminus (\{q\} \sqcup J)$}. 
\intertext{
This is precisely the face of the polytope $\Delta_{\lambda'}$ corresponding to the set $\{q\} \sqcup J$, where} 
\lambda'_j &= \lambda_j - b_j - a && \text{for $j \in J$} \\
\lambda'_m &= \lambda_m - a && \text{for $m \in V\setminus J$}.
\end{align*}
When $a$ and $\{b_j\}_{j\in J}$ are sufficiently small, the combinatorial type of $\Delta_{\lambda'}$ is the same as that of $\Delta_\lambda$, and the volume of the face corresponding to $\{q\} \sqcup J$ is equal to the symplectic volume of the stratum 
\[ D_q \cap \bigcap_{j \in J} D_j\]
with respect to a symplectic form in cohomology class
\[ \left[\omega_{\lambda'}\right] = \left[\omega_\lambda \right] - \sum_{j \in J} b_j \cdot D_j - a \cdot \sum_{j \in V} D_j,\]
by \cite[Theorem 2.10]{Guillemin1994}.
This yields the result. The right-hand side vanishes if $D_q \cap \bigcap_{j\in J} D_j= \emptyset$, and therefore if the facets of $\Delta_\lambda$ from $\{q\}\sqcup J$ do not intersect. 
\end{proof}
\begin{rmk} We hid some technical details when applying the Duistermaat--Heckman theorem. When the covectors $da$, $db_j$ are not part of a $\Z$-basis of $P^\vee$, the corresponding toric substack $D_q\cap \bigcap_{j\in J}D_j$ has a generic stabilizer. The order of the generic stabilizer equals the ratio between the affine volume form of the face corresponding to $\{q\}\sqcup J$ and the residual volume form $d\vol_{q,J}$ on the $c$-plane. Since, by definition, the integral over $D_q \cap \bigcap_{j\in J} D_j$ is the integral over the coarse moduli space divided by the order of the generic stabilizer, the volume of $E_{q,J}(a,b)$ with respect to $d\vol_{q,J}$ gives the correct answer.
\end{rmk} 

We now substitute this into \eqref{eqn:perH}: we can ensure that $b_j$ in \eqref{eqn:perH} is sufficiently small by making $\epsilon>0$ small, and also ensure that $a'_{q,K}(b)$ in \eqref{eqn:perH} is sufficiently small by making  $t>0$ small because of the estimate: 
\[
0\le a'_{q,K}(b)= \frac{\log(1+\sum_{k\in K} t^{b_k})}{-\log t} \le \frac{\log |V|}{-\log t}. 
\] 
We now obtain: 
\begin{align*}
\int_{C_t^+} \Omega_t &= \left(1 + \bigO\right) \cdot \left(  \int_{Y_{\Delta_\lambda}} P_t(D)_{n+1} + O\left((-\log t)^n  t^\epsilon\right) \right),
\end{align*}
where 
\begin{align*}
P_t(D) &= (-\log t)^n \cdot \sum_{\substack{q,K \subset J \\ q\notin J}} (-1)^{|J \setminus K|} \int_{[0,\epsilon]^J}  e^{\omega_\lambda - \sum_{j \in J} b_j \cdot D_j + \sigma \cdot \log_t\left(1+\sum_{k \in K} t^{b_k}\right)} db \cdot D_q \prod_{j \in J} D_j
\end{align*}
The subscript `$n+1$' denotes the part of $P_t(D)$ in degree $2(n+1)$: that is the only part that gets hit by the integral $\int_{Y_{\Delta_\lambda}}$. The summand for $(J,q)$ automatically vanishes unless the facets corresponding to $\{q\} \sqcup J$ intersect, in particular, unless the facets corresponding to $\{q\} \sqcup K$ intersect. Therefore we can now withdraw the assumption imposed at the end of Section \ref{sec:decomposing-domain} that the facets corresponding to $\{q\}\sqcup K$ intersect and consider the sum over arbitrary $K, J, q$ with $K\subset J$ and $q\notin J$.

\subsection{End of the proof} 

By replacing $\omega_\lambda$, $D_j$, $\sigma$ 
in the definition of $P_t(D)$ with $(-\log t) \omega_\lambda$, $(-\log t) D_j$, $(-\log t) \sigma$ 
respectively, we obtain 
\begin{align*} 
(-\log t)^n \sum_{q,K\subset J, q\notin J} 
(-1)^{|J\setminus K|}&  \int_{[0,\epsilon]^J} 
e^{(-\log t)(\omega_\lambda - \sum_{j\in J} b_j \cdot D_j)  
- \sigma \log (1+\sum_{k\in K} t^{b_k})}db \\ 
& \times (-\log t)^{|J|+1} D_q \prod_{j\in J} D_j 
\end{align*} 
The degree $2(n+1)$ part of this quantity equals $(-\log t)^{n+1} P_t(D)_{n+1}$. 
Making the substitution $s_j = -\log t \cdot b_j$, 
we find that $P_t(D)_{n+1} = Q_t(D)_{n+1}$ with 
\[
Q_t(D) = \sum_{q,K \subset J,q\notin J} (-1)^{|J \setminus K|} \cdot \int_{[0,-\epsilon \log t]^J} t^{-\omega_\lambda} \cdot e^{-\sum_{j \in J} s_j \cdot D_j - \sigma\cdot \log\left(1+\sum_{k \in K} e^{-s_k} \right)} ds \cdot D_q\prod_{j \in J} D_j 
\]
where the factor $(-\log t)^{|J|}$ is absorbed by $d b$ to become $d s$. 
By expanding the exponential, we find that
\begin{align*}
Q_t(D) 
&= t^{-\omega_\lambda} \left(\sigma+ \sum_{q,J,\ell,\vec{m}: q\notin J, J\neq \emptyset} \frac{I^\epsilon_{\ell;\vec{m}}(t)}{\ell! \prod_{j\in J}m_j!}  \cdot D_q \cdot  (-\sigma)^\ell  \prod_{j \in J} (-D_j)^{m_j + 1}\right),
\end{align*}
where the sum is over $\ell \in \Z_{\ge 0}$, $\vec{m} \in (\Z_{\ge 0})^J$, $q$, $J$ with $q\notin J$ and 
\[ 
I_{\ell;\vec{m}}^\epsilon(t) :=  \int_{[0,-\epsilon \log t]^J} \prod_{j\in J} s_j^{m_j} \cdot g_\ell\left(\{e^{-s_i}\}_{i \in J}\right) ds  
\]
is an $\epsilon$-truncated version of the `local integral' $I_{\ell,\vec{m}}$ in \eqref{eqn:Ilm}. 
We note that the first term $\sigma=\sum_{q\in V} D_q$ arises from the case where $J=K=\emptyset$. 
\begin{lem} 
	\label{lem:convergence_Ilm}We have $I^\epsilon_{\ell;\vec{m}}(t) = I_{\ell;\vec{m}} + O\left((-\log t)^{|\vec{m}|} t^\epsilon\right)$ as $t\to +0$, where $|\vec{m}|=\sum_{j\in J} m_j$. 
\end{lem}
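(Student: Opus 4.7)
The plan is to estimate directly the tail of the integral $I_{\ell;\vec{m}}$ that is cut off in $I^\epsilon_{\ell;\vec{m}}(t)$, using the exponential bound \eqref{eqn:boundint} on the integrand $g_\ell$. Writing $T := -\epsilon \log t$, so that $t^\epsilon = e^{-T}$, the difference is
\[
I_{\ell;\vec{m}} - I^\epsilon_{\ell;\vec{m}}(t) = \int_{R(t)} \prod_{j \in J} s_j^{m_j} \cdot g_\ell\bigl(\{e^{-s_i}\}_{i\in J}\bigr) \, ds,
\]
where $R(t) = [0,\infty)^J \setminus [0,T]^J = \bigcup_{j\in J}\{s \in [0,\infty)^J : s_j \ge T\}$.

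Since $e^{-s_i} \in (0,1]$ for $s_i \ge 0$, the bound \eqref{eqn:boundint} applies and the integrand is dominated in absolute value by $C_\ell \prod_{j\in J} s_j^{m_j} e^{-s_j}$. Applying a union bound over the sets $\{s_j \ge T\}$ reduces the estimate to a sum, over $j \in J$, of products of one-variable integrals of the form
\[
\int_T^\infty s^{m_j} e^{-s}\, ds \cdot \prod_{i \ne j} \int_0^\infty s^{m_i} e^{-s}\, ds.
\]
The factors with $i \ne j$ are just finite Gamma values, so the size of each term is controlled by the incomplete Gamma integral $\int_T^\infty s^{m_j} e^{-s}\, ds$. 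A standard integration-by-parts argument (or the asymptotic expansion of $\Gamma(m_j+1,T)$) gives $\int_T^\infty s^{m_j} e^{-s}\, ds = O(T^{m_j} e^{-T})$ as $T\to \infty$.

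Putting the pieces together yields $|I_{\ell;\vec{m}} - I^\epsilon_{\ell;\vec{m}}(t)| = O(T^{\max_j m_j} e^{-T}) = O\bigl((-\log t)^{|\vec{m}|}\, t^\epsilon\bigr)$, using $\max_j m_j \le |\vec{m}|$ in the last step. There is no serious obstacle here — the argument is a routine tail estimate — the only point that requires any care is checking that the hypothesis $e^{-s_i}\in [0,1]$ of the key bound \eqref{eqn:boundint} is satisfied on the entire domain of integration, which it is since $s_i \ge 0$ throughout.
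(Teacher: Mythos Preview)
Your proof is correct and follows essentially the same approach as the paper: bound the integrand by $C_\ell \prod_j s_j^{m_j} e^{-s_j}$ via \eqref{eqn:boundint}, then control the tail using the one-variable incomplete Gamma estimate $\int_T^\infty s^m e^{-s}\,ds = O(T^m e^{-T})$. The paper's version is terser (it simply states the pointwise bound and the one-variable tail estimate and leaves the factorization/union-bound step implicit), whereas you have written out that step explicitly.
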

\begin{proof} 
We recall the bound \eqref{eqn:boundint}, which was used to prove exponential decay of the integrand at infinity. It gives
\[\prod_{j \in J}s_j^{m_j} \cdot g_\ell\left(\{e^{-s_i}\}_{i \in J}\right) \le C_\ell \cdot \prod_{j \in J} e^{-s_j} s_j^{m_j} \qquad\text{for $s_j \ge 0$.}\] 
The order estimate then follows by 
\[
\int_{-\epsilon \log t}^\infty s^{m} e^{-s} ds = O\left((-\log t)^m t^\epsilon\right).
\]
\end{proof} 

Now observe that $I_{0,\vec{m}} = 0$, and the anticanonical hypersurface $X$ is homologous to the toric boundary divisor $\sigma=\sum_{q\in V}D_q$, so we have proved
\[
 \int_{C_t^+} \Omega_t = \left(1 + \bigO\right) \cdot \left(\int_X t^{-\omega_\lambda} \cdot \hG_X + O\left((-\log t)^n t^\epsilon\right) \right) 
\]
where $\hG_X$ is as given in \eqref{eq:hG_X}. 
Because $-\log t = O(t^{-\delta})$ for any $\delta >0$, we can absorb the error terms depending on $\log t$ by reducing $\epsilon$, and thereby obtain
\[ 
\int_{C_t^+} \Omega_t = \int_X t^{-\omega_\lambda} \cdot \hG_X + \bigO
\]
for some (new, smaller) $\epsilon > 0$. This completes the proof of Theorem \ref{thm:periodR}.

\section{Proof of Theorem \ref{thm:gamma}}
\label{sec:gamma}

\subsection{Formula for the Gamma class}

Since the Gamma class is multiplicative, the short exact sequence $0 \to TX \to TY \to NX \to 0$ gives
\begin{align*}
\hGamma(TX) &= \frac{\hGamma(TY)}{\hGamma(NX)}.
\end{align*}
The Euler sequence on the toric variety $Y$ gives the following expression for its Gamma class:
\[ \hGamma(TY) = \prod_{j \in V} \Gamma(1+D_j)\]
(compare Cox, Little and Schenck \cite[Proposition 13.1.2]{CLS}). 
Setting $\sigma := \sum_{j\in V} D_j$ as before, we have
\[ \hGamma(NX) = \Gamma(1+\sigma)\]
because $X$ is anticanonical and $K_Y = -\sigma$. 
Thus we have the formula
\begin{equation}
\label{eq:1Gamma_X}
\hGamma_X = \frac{\prod_{j \in V} \Gamma(1+D_j)}{\Gamma(1+\sigma)}.
\end{equation} 
Substituting in the power series expansion of $\Gamma(1+z)$, we obtain the more explicit
\begin{equation}
\label{eq:2Gamma_X}
\hGamma_X = \exp\left(\sum_{k \ge 2} (-1)^k \cdot \frac{\zeta(k)}{k} \cdot \left(\sum_{j \in V} D_j^k - \sigma^k\right)\right).
\end{equation}

\subsection{The identity $\hG_X = \hGamma_X$ as formal power series} 
\label{subsec:G_Gamma} 

The expressions \eqref{eq:hG_X}, \eqref{eq:2Gamma_X} for $\hG_X$ and $\hGamma_X$ respectively define symmetric formal power series in the variables $D_j$, $j\in V$. Theorem \ref{thm:gamma} follows from the following stronger statement: 
\begin{prop} \label{pro:G_Gamma}We have $\hG_X = \hGamma_X$ as formal power series in $\{D_j:j\in V\}$. 
\end{prop}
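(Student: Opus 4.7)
The strategy is to reorganize the combinatorial sum \eqref{eq:hG_X} into a single $t$-integral that can be evaluated by the fundamental theorem of calculus. First I would perform the $\vec m$-summation by exchanging sum and integral in \eqref{eqn:Ilm}:
\[
\sum_{\vec m}\frac{I_{\ell;\vec m}}{\prod_{j\in J}m_j!}\prod_{j\in J}(-D_j)^{m_j} \,=\, \int_{[0,\infty)^J} e^{-\sum_{j\in J} D_j s_j}\,g_\ell(e^{-s_1},\dots,e^{-s_k})\,ds.
\]
The $\ell$-sum is then evaluated using $\sum_{\ell\ge 1}\tfrac{(-\sigma)^{\ell-1}}{\ell!}(\log Y)^\ell = (1-Y^{-\sigma})/\sigma$; combined with $\sum_{K\subseteq J}(-1)^{|K|}=0$, this collapses the combinatorial sum inside $g_\ell$ into the single term $-\tfrac{1}{\sigma}G_{-\sigma}(e^{-s})$, where $G_\alpha(X):=\sum_{K\subseteq J}(-1)^{|K|}(1+\sum_{j\in K}X_j)^\alpha$.

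Next I would apply the Laplace representation $(1+Y)^{-\sigma}=\tfrac{1}{\Gamma(\sigma)}\int_0^\infty t^{\sigma-1}e^{-(1+Y)t}\,dt$ to convert the alternating sum in $G_{-\sigma}$ into a product:
\[
G_{-\sigma}(e^{-s_1},\dots,e^{-s_k}) \,=\, \frac{1}{\Gamma(\sigma)}\int_0^\infty t^{\sigma-1}e^{-t}\prod_{j\in J}\bigl(1-e^{-te^{-s_j}}\bigr)\,dt.
\]
After swapping the order of integration, the $s_j$-integrals separate and evaluate (via the substitutions $u_j=e^{-s_j}$ and $v_j=tu_j$) to give the identity $(-D_j)\int_0^\infty e^{-D_j s_j}(1-e^{-te^{-s_j}})\,ds_j = F(D_j,t)-1$, where $F(D,t):=Dt^{-D}\gamma(D,t)$ and $\gamma(D,t)=\int_0^t v^{D-1}e^{-v}\,dv$ is the lower incomplete gamma function. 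Summing over nonempty $J\subseteq V\setminus\{q\}$ via $\sum_{J\ne\emptyset}\prod_{j\in J}(F_j-1)=\prod_{j\in V\setminus\{q\}}F_j - 1$, the constant $-1$ pieces precisely cancel the initial $+1$ in $\hG_X$ after integration against $t^{\sigma-1}e^{-t}$, leaving
\[
\hG_X \,=\, -\frac{1}{\sigma\Gamma(\sigma)}\sum_{q\in V}(-D_q)\int_0^\infty t^{\sigma-1}e^{-t}\prod_{j\in V\setminus\{q\}}F(D_j,t)\,dt.
\]

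The final step is the key telescoping: since $\prod_{j\neq q}F(D_j,t)=t^{-(\sigma-D_q)}\prod_{j\neq q}D_j\gamma(D_j,t)$ and $\tfrac{d}{dt}\gamma(D,t)=t^{D-1}e^{-t}$, the $t$-integrand becomes a total derivative,
\[
\sum_{q\in V} D_q\,t^{D_q-1}e^{-t}\prod_{j\in V\setminus\{q\}}D_j\gamma(D_j,t) \,=\, \Bigl(\prod_{j\in V}D_j\Bigr)\cdot\frac{d}{dt}\prod_{j\in V}\gamma(D_j,t),
\]
and the fundamental theorem of calculus evaluates the $t$-integral to $\prod_{j\in V}\Gamma(D_j)$ (the boundary term at $t=0$ vanishes because $\gamma(D_j,0)=0$). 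This yields $\hG_X = \prod_{j\in V}D_j\Gamma(D_j)/[\sigma\Gamma(\sigma)] = \prod_{j\in V}\Gamma(1+D_j)/\Gamma(1+\sigma) = \hGamma_X$ by \eqref{eq:1Gamma_X}. The main technical point to address is that \eqref{eq:hG_X} is a formal power series identity in $\{D_j\}$, whereas the intermediate integrals converge only in the region $\Re(D_j)>0$ with $\Re(\sigma)$ sufficiently large; one verifies the identity between analytic functions on this open domain and then matches formal series coefficients via analytic continuation.
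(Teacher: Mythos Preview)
Your argument is correct, and it diverges from the paper's proof at the key step.  Both begin the same way: summing over $\vec m$ and $\ell$ in \eqref{eq:hG_X} yields, for each pair $(q,J)$, the integral
\[
(-D_q)\prod_{j\in J}(-D_j)\int_{[0,\infty)^J} e^{-\sum_{j\in J}D_j s_j}\,\frac{\sum_{K\subset J}(-1)^{|K|}(1+\sum_{k\in K}e^{-s_k})^{-\sigma}}{-\sigma}\,ds,
\]
which is exactly the summand in the paper's auxiliary function $G_X(D)$.  From here the paper swaps the sum over $K$ with the integral, integrates out the variables $s_j$ with $j\in J\setminus K$, and performs the inclusion-exclusion over $J\supset K$; this collapses everything to the case $K=V\setminus\{q\}$, and the resulting sum over $q$ is reinterpreted as a single integral over the tropical projective space $\T\bP^{|V|-1}$, which is then identified with the classical Dirichlet integral $\int_\nabla \prod u_j^{D_j-1}\,du = \prod\Gamma(D_j)/\Gamma(\sigma)$.

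Your route is genuinely different: you insert the Euler integral $(1+Y)^{-\sigma}=\Gamma(\sigma)^{-1}\int_0^\infty t^{\sigma-1}e^{-(1+Y)t}\,dt$, which turns the alternating sum over $K$ into the product $\prod_{j\in J}(1-e^{-te^{-s_j}})$, so that the $s_j$-integrals separate and produce incomplete gamma functions.  After the inclusion-exclusion over $J$ and the cancellation of the constant term, the $t$-integrand is recognized as $\frac{d}{dt}\prod_{j\in V}\gamma(D_j,t)$ times a constant, and the fundamental theorem of calculus gives the answer directly.  This avoids both the tropical projective space interpretation and the multivariate Dirichlet integral, replacing them with one-dimensional manipulations; in effect you have rediscovered a proof of Dirichlet's formula via the auxiliary $t$-variable.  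The paper's approach has the virtue of exposing the geometry (the simplex $\nabla$) underlying the combinatorics, while yours is arguably more self-contained.

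One point to sharpen: your last sentence about ``matching formal series coefficients via analytic continuation'' is too vague.  The precise issue is that \eqref{eq:hG_X} is a formal power series, whereas your integrals define an analytic function $G_X(D)$ only for $\Re D_j>0$.  You have shown $G_X(D)=\Gamma_X(D)$ there, hence $G_X$ extends analytically to a neighbourhood of $D=0$ with Taylor series $\hGamma_X$; but you still need to check that the Taylor (or at least asymptotic) expansion of $G_X$ at $D=0$ agrees with $\hG_X$, i.e.\ that term-by-term differentiation under the integral is justified.  This is exactly what the paper isolates as Lemma~\ref{lem:asymp}, and the same argument works verbatim for your setup.
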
 
In the rest of this Section \ref{subsec:G_Gamma}, we prove Proposition \ref{pro:G_Gamma}. By \eqref{eq:hG_X}, we have 
\begin{multline*} 
\hG_X= 1 + \sum_{q,J,\ell,\vec{m}} (-D_q) \prod_{j\in J}(-D_j) \\
\times \int_{[0,\infty)^J} \frac{(-\sigma)^{\ell-1} \prod_{j\in J}(-s_j D_j)^{m_j}}{\ell! \prod_{j\in J} m_j!}
\sum_{K\subset J} (-1)^{|K|} \log^\ell\left(1+\sum_{k\in K} e^{-s_k}\right) ds^J
\end{multline*} 
where, as before, the sum is taken over all $\ell \ge 1$, $q\in V$, all nonempty subsets $J\subset V$ with $q\notin J$, and all vectors $\vec{m} \colon J \to \Z_{\ge 0}$, and we write $ds^J = \prod_{j\in J} ds_j$. We now regard $D_j$ as positive real numbers and introduce the following function $G_X(D)$ of $D=(D_j:j\in V) \in (\R_+)^V$: 
\begin{multline*} 
G_X(D) := 1+\sum_{q,J} (-D_q) \prod_{j\in J}(-D_j) \\
\times \int_{[0,\infty)^J} e^{-\sum_{j\in J} D_j s_j} 
\sum_{K\subset J} (-1)^{|K|} \frac{\left(1+\sum_{k\in K} e^{-s_k}\right)^{-\sigma}-1}{-\sigma} ds^J 
\end{multline*}
where the sum is over all $q\in V$ and all nonempty subsets $J \subset V$ not containing $q$, and $\sigma = \sum_{j\in V} D_j$. The convergence of the integral is ensured by the exponentially decaying factor $e^{-\sum_{j\in V} D_j s_j}$. 

It is straightforward to compute that, if the Taylor expansion of the integrand could be exchanged with the integral in the definition of $G_X(D)$, the result would be the formal power series $\hG_X$. 
In fact we prove in Lemma \ref{lem:asymp} below that, for a fixed $D=(D_j:j\in V) \in (\R_+)^V$, we have the asymptotic expansion
\begin{equation}
  \label{eq:asymptotic_expansion_G_X}
G_X(y D) \sim \left. \hG_X\right |_{D_j \rightarrow y D_j} \qquad \text{as $y  \to +0$} 
\end{equation}
where $\hG_X|_{D_j \to y D_j}$ means the substitution of $yD_j$ for $D_j$ 
in the formal power series $\hG_X$.  

Similarly, we have the asymptotic expansion\footnote{actually the Taylor expansion}
\[
\Gamma_X(y D) \sim \left. \hGamma_X\right |_{D_j\to y D_j} \qquad \text{as $y\to +0$}
\]
where $\Gamma_X(D)$ is given by 
\[
\Gamma_X(D) := \frac{\prod_{j\in V}\Gamma(1+D_j)}{\Gamma(1+\sigma)} \qquad \text{with $\sigma=\sum_{j\in V} D_j$}. 
\] 
Therefore it suffices to show that $G_X(D) = \Gamma_X(D)$ as functions of $D$.
 
Note that we can interchange the integral sign with the sum over $K$ in the definition of $G_X(D)$ because of the factor $e^{-\sum_{j\in J} D_j s_j}$ (this interchange was not possible for $\hG_X$). Thus $G_X(D)$ equals: 
\begin{align*}
& 1+\sum_{q,J} (-D_q) \prod_{j\in J}(-D_j) 
\int_{[0,\infty)^J} e^{-\sum_{j\in J} D_j s_j} 
\sum_{K\subset J} (-1)^{|K|} \frac{\left(1+\sum_{k\in K} e^{-s_k}\right)^{-\sigma}}{-\sigma} ds^J \\
&= 1+ \sum_{K\subset J, J\neq \emptyset, q\notin J}
(-1)^{|J\setminus K|}\frac{D_q\prod_{k\in K} D_k}{\sigma} 
\int_{[0,\infty)^K} e^{-\sum_{k\in K} D_k s_k}\left(1+\sum_{k\in K} e^{-s_k}\right)^{-\sigma} ds^K. 
\end{align*} 
In the first line we used the fact that $\sum_{K\subset J}(-1)^K = 0$, and in the second line we interchanged the integration and  summation, and then integrated $s_j$ out for $j\in J\setminus K$. 
Fixing an element $q\in V$ and a subset $K\subset V$ not containing $q$, we sum over subsets $J$ containing $K$ but not $q$. Using the fact that 
\[
\sum_{K\subset J\subset V\setminus \{q\}, J\neq \emptyset}
(-1)^{|J\setminus K|} = 
\begin{cases} 
-1 & \text{if $K=\emptyset$;}\\
0 & \text{if $|K|\le |V|-2$;} \\
1 &\text{if $|K|=|V|-1$,} 
\end{cases}
\]
we obtain 
\begin{equation} 
	\label{eq:GXD}
G_X(D) =\frac{\prod_{j\in V} D_j}{\sigma} 
\sum_{K\sqcup \{q\} = V} 
\int_{[0,\infty)^K}e^{-\sum_{k\in K} D_ks_k}\left(1+\sum_{k\in K}e^{-s_k}\right)^{-\sigma} ds^K, 
\end{equation} 
where the case $K=\emptyset$ cancels the leading term $1$ and only the case $|K|=|V|-1$ remains.

In order to compute the sum of integrals in Equation \eqref{eq:GXD}, we interpret the domains of integration as subsets of the projective space over the tropical numbers: concretely, we define the tropical projective space to be the quotient 
\[\T\bP^{|V|-1} :=\left((\R_{\ge 0})^V\setminus \{0\}\right)/\R_+,\]
where $\R_+$ acts on $(\R_{\ge 0})^V$ diagonally by scalar multiplication. We write $[u_j:j\in V]$ for the homogeneous coordinates on $\T \bP^{|V|-1}$. This projective space is equipped with a natural volume form, which is given by the expression
\begin{equation} \label{eq:volume_form_TP}
 d\vol = \prod_{j\in V\setminus \{q\}} d\log \frac{u_j}{u_q}    
\end{equation}
for each choice of `inhomogeneous coordinates' which identify the complement of the hypersurface $\{u_q = 0\}$ with tropical affine space via the map
\begin{align*}
\T\bP^{|V|-1} \setminus \{u_q= 0\}& \overset{\cong}{\longrightarrow} (\R_{\ge 0})^{|V|-1}, \\ 
[u_j:j\in V] & \longmapsto (t_j = u_j/u_{q}:j\in V\setminus \{q\}).
\end{align*}
The key point is that the equality $d \log t = - d \log 1/t $ implies that the right-hand sides of Equation \eqref{eq:volume_form_TP} for two different affine charts agree on the overlap, yielding a volume form on $\T\bP^{|V|-1}  $.

\begin{lem} 
With respect to the volume form in Equation \eqref{eq:volume_form_TP}, we have: 
\[
\sum_{K\sqcup \{q\} = V} 
\int_{[0,\infty)^K}e^{-\sum_{k\in K} D_ks_k}\left(1+\sum_{k\in K}e^{-s_k}\right)^{-\sigma} ds^K 
= \int_{\T\bP^{|V|-1}} \frac{\prod_{j\in V} u_j^{D_j}}{(\sum_{j\in V} u_j)^\sigma} d\vol.
\]

\end{lem}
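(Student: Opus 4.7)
The strategy is to decompose the tropical projective space $\T\bP^{|V|-1}$ into the regions where each homogeneous coordinate is maximal, and show that the integral over the $q$th region precisely reproduces the $K = V \setminus \{q\}$ summand on the left-hand side.

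First, I would set, for each $q \in V$,
\[ R_q := \left\{ [u_j : j \in V] \in \T\bP^{|V|-1} : u_q \ge u_j \text{ for all } j \in V \right\}. \]
These closed sets cover $\T\bP^{|V|-1}$, and their pairwise intersections lie inside codimension-$1$ subsets $\{u_q = u_{q'}\}$, hence have measure zero with respect to the volume form $d\vol$. Therefore $\int_{\T\bP^{|V|-1}} = \sum_{q \in V} \int_{R_q}$.

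Next, I would pass to inhomogeneous coordinates on $R_q$. Writing $K = V \setminus \{q\}$ and $t_j := u_j/u_q$ for $j \in K$ identifies $R_q$ with $[0,1]^K$, and by the defining property \eqref{eq:volume_form_TP} of $d\vol$ we have $d\vol = \prod_{j \in K} d\log t_j$ on this chart (up to an overall orientation we absorb so that the integral of a nonnegative function is nonnegative, as in Remark \ref{rmk:sign_orientation}). The integrand is homogeneous of total degree $\sum_{j \in V} D_j - \sigma = 0$, so dividing numerator and denominator by $u_q^\sigma$ gives
\[ \frac{\prod_{j \in V} u_j^{D_j}}{\bigl(\sum_{j \in V} u_j\bigr)^\sigma} = \frac{\prod_{j \in K} t_j^{D_j}}{\bigl(1 + \sum_{j \in K} t_j\bigr)^\sigma}. \]

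Third, I would perform the substitution $t_j = e^{-s_j}$, so that $s_j$ runs over $[0, \infty)$, $d\log t_j = -ds_j$, and $t_j^{D_j} = e^{-D_j s_j}$. Taking absolute values of the volume form (consistently with the orientation choice above), the integral over $R_q$ transforms into
\[ \int_{[0,\infty)^K} e^{-\sum_{k \in K} D_k s_k} \bigl(1 + \textstyle\sum_{k \in K} e^{-s_k}\bigr)^{-\sigma} \, ds^K, \]
which is exactly the summand indexed by $K = V \setminus \{q\}$ on the left-hand side. Summing over $q$ completes the proof.

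The only delicate point is the orientation/sign bookkeeping — one must check that, after the inversion $t_j = e^{-s_j}$ which reverses the orientation of each coordinate, the total sign contributed by all $|K|$ coordinate changes is absorbed consistently with the convention that $d\vol$ is treated as a positive measure. Once this is settled, the calculation is a routine change of variables, and the convergence of each resulting integral (for $D_j, \sigma > 0$) is immediate from the exponential decay of $e^{-\sum D_k s_k}$.
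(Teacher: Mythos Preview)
Your proposal is correct and follows essentially the same approach as the paper: decompose $\T\bP^{|V|-1}$ into the regions $R_q$ where $u_q$ is maximal, pass to the inhomogeneous coordinates $t_k = u_k/u_q$ identifying $R_q$ with $[0,1]^K$, and then substitute $t_k = e^{-s_k}$. The paper's proof is slightly terser about the sign/orientation bookkeeping but otherwise identical.
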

\begin{proof}
We begin by noting that $\prod_{j\in V} u_j^{D_j}/(\sum_{j\in V} u_j)^\sigma$ is a well-defined function on $\T\bP^{|V|-1}$ because the numerator and denominator are homogeneous functions of equal degree, and the denominator is non-vanishing.  Consider the subdivision $\T\bP^{|V|-1} = \bigcup_{q\in V} R_q$ with 
\[
R_q = \{[u_j:j\in V]\in \T\bP^{|V|-1} : u_q=\max(u_j : j\in V)\}.
\]
Then we have: 
\begin{align*} 
\int_{\T\bP^{|V|-1}} \frac{\prod_{j\in V} u_j^{D_j}}{(\sum_{j\in V} u_j)^\sigma} d\vol 
& = \sum_{q\in V}\int_{R_q} \frac{\prod_{j\in V} u_j^{D_j}}{(\sum_{j\in V} u_j)^\sigma} d\vol \\
&= \sum_{q\sqcup K = V} \int_{[0,1]^K} \frac{\prod_{k\in K} t_k^{D_k}}{(1+\sum_{k\in K} t_k)^\sigma} \prod_{k\in K}\frac{dt_k}{t_k} 
\end{align*} 
where, in the second line, we used the inhomogeneous coordinates $(t_k:k\in K)$ given by $t_k = u_k/u_q$. The conclusion follows by the change of variables $t_k = e^{-s_k}$. 
\end{proof}

We apply the above lemma to \eqref{eq:GXD}. We rewrite the integral over $\T\bP^{|V|-1}$ as an integral over the simplex $\nabla = \{\sum_{j\in V} u_j =1\}$, which is a slice of the diagonal action on $\R^V_{\geq 0} \setminus \{0\}$. 
Writing $\{D_0,\dots,D_m\}$ for $\{D_j : j\in V\}$, we find that the restriction of $d\vol$ to $\nabla$ is
\[\frac{du_1 \ldots du_m}{u_0 u_1 \ldots u_m}.\]
Thus we have  
\begin{align*} 
G_X(D) &= \frac{\prod_{j=0}^{m} D_j}{\sigma} 
\int_{\nabla} \prod_{j=0}^m u_j^{D_j-1} du_1\ldots du_m\\
& = \frac{\prod_{j=0}^{m} D_j}{\sigma} \cdot 
\frac{\Gamma(D_0) \cdots \Gamma(D_m)}{\Gamma(\sigma)} \\
& = \Gamma_X(D),
\end{align*} 
using a well-known integral due to Dirichlet \cite{Dirichlet1839},\footnote{This integral generalizes the Euler integral of the first kind which defines the Beta function (from which it can be proved by induction), and expresses the fact that the Dirichlet distribution is normalized.} together with the identity $z\Gamma(z) = \Gamma(1+z)$ and the formula \eqref{eq:1Gamma_X}. This essentially completes the proof of Proposition \ref{pro:G_Gamma} and hence of Theorem \ref{thm:gamma}, with the only missing step being the computation of the asymptotic expansion of $G_X$, which we now provide:
\begin{lem} 
\label{lem:asymp}
For a fixed $D \in (\R_+)^V$, we have the asymptotic expansion
\[
G_X(y D) \sim \left. \hG_X\right |_{D_j \rightarrow y D_j} \qquad \text{as $y  \to +0$}. 
\] 
\end{lem}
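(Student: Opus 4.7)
The lemma asserts an asymptotic expansion of $G_X(yD)$ as $y \to 0^+$ matching the formal power series $\hG_X|_{D_j\to yD_j}$ order by order. My starting point is the identity
\[
\sum_{K\subset J}(-1)^{|K|}\frac{(1+\sum_{k\in K}e^{-s_k})^{-y\sigma}-1}{-y\sigma} \;=\; \sum_{\ell\ge 1}\frac{(-y\sigma)^{\ell-1}}{\ell!}\,g_\ell\bigl(\{e^{-s_j}\}_{j\in J}\bigr),
\]
obtained by expanding $(1+u)^{-y\sigma}$ in $y$ and swapping finite sums; this rewrites each $(q,J)$-integrand in $G_X(yD)$ in terms of the functions $g_\ell$ from \eqref{eqn:Ilm}. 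Formally Taylor-expanding $e^{-y\sum_{j\in J} D_j s_j}$ in powers of $ys_j$ and integrating term by term produces exactly the terms of $\hG_X|_{D_j\to yD_j}$ indexed by $(q,J,\ell,\vec m)$, with coefficients $I_{\ell;\vec m}$ from \eqref{eq:hG_X}. My task is to justify this formal operation as an honest asymptotic expansion via Taylor's theorem with remainder to arbitrary finite order $N$.

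\textbf{Implementation.} Fix $N \ge 0$. Inside each $(q,J)$-summand, truncate both series:
\[
e^{-y\sum_{j} D_j s_j}=\sum_{|\vec m|\le N}\prod_{j\in J}\frac{(-yD_j s_j)^{m_j}}{m_j!}+R_1(y,s),
\]
with Taylor remainder $|R_1|\le (y\sum_j D_j s_j)^{N+1}/(N+1)!$, and likewise truncate the $\ell$-series at $\ell = N+1$ with tail $R_2(y,s)$. Multiplying out and integrating, the main term gives, after summing over $(q,J)$, the degree-$\le N$ part of $\hG_X|_{D_j\to yD_j}$ with coefficients of the form $I_{\ell;\vec m}$, while the higher-degree cross terms are manifestly of order $y^{N+1}$ or more. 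Convergence of all these coefficient integrals, as well as of the $R_1$-remainder contribution, rests on the exponential-decay bound \eqref{eqn:boundint}, i.e.\ $|g_\ell(\{e^{-s_j}\})|\le C_\ell\prod_{j\in J} e^{-s_j}$, which allows all polynomial-weighted integrals of $g_\ell$ over $[0,\infty)^J$ to converge absolutely and yields an $O(y^{N+1})$ bound on the $R_1$-contribution.

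\textbf{Main obstacle.} The delicate step is bounding the tail $R_2$, since the constants $C_\ell$ in \eqref{eqn:boundint} depend on $\ell$ and termwise estimation would require control over their growth. My plan is to keep $R_2$ in closed form as
\[
R_2(y,s)=\sum_{K\subset J}(-1)^{|K|}\Bigl[\tfrac{(1+\sum_{k\in K}e^{-s_k})^{-y\sigma}-1}{-y\sigma}-\sum_{\ell=1}^{N+1}\tfrac{(-y\sigma)^{\ell-1}\log^\ell(1+\sum_{k\in K}e^{-s_k})}{\ell!}\Bigr],
\]
i.e.\ the alternating sum over $K$ of the analytic Taylor remainder of $(1+u)^{-y\sigma}$ in $y$ at order $N+2$. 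Pointwise this closed form is bounded by a constant times $y^{N+1}$ uniformly in $s$, and by re-running the ``vanishing on coordinate hyperplanes, hence divisible by $\prod_j X_j$'' argument underlying \eqref{eqn:boundint} at order $N+2$ one extracts the decay factor $\prod_{j\in J} e^{-s_j}$ uniformly in $y$ near $0$, making $R_2$ integrable with integral of size $O(y^{N+1})$. Once this uniform remainder bound is in place, matching coefficients of $y^N$ on both sides is transparent from the formula \eqref{eq:hG_X} for $\hG_X$.
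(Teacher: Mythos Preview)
Your argument is correct and rests on the same two ingredients as the paper's proof: Taylor's theorem with remainder in $y$, together with the observation that the alternating sum over $K$ vanishes on each coordinate hyperplane $X_j=0$ and hence is divisible by $\prod_j X_j$, giving integrable decay. The organization differs: you factor the integrand as $e^{-y\sum D_js_j}$ times the $\ell$-series and Taylor-expand each factor separately, producing four cross-terms $(\text{trunc}+R_1)(\text{trunc}+R_2)$ to bound; the paper instead changes variables to $X_j=e^{-s_j}$, writes the entire integrand as $g_J(X,y)=h_J(X,y)\prod_jX_j^{yD_j}$ with $h_J$ already having the $\prod_jX_j$ divided out, and applies Taylor's theorem once to $g_J$ in $y$. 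The paper's route is a bit more economical (the smoothness of $h_J$ on the compact cube $[0,1]^J\times[0,1]$ gives all remainder bounds in one stroke, and the only non-bounded factors in $\partial_y^m g_J$ are powers of $\log X_j$, which are integrable on $[0,1]$), whereas your route requires separately extracting the $\prod_je^{-s_j}$ decay from the closed-form $R_2$ and checking each cross-term; but both are valid and the substance is the same.
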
 
\begin{proof} 
We fix $D\in (\R_+)^V$ throughout the proof. 
For a nonempty subset $J\subset V$, we set 
\begin{align*} 
g_J(X,y) & := h_J(X,y) \cdot \prod_{j\in J} X_j^{y D_j}, \\ 
h_J(X,y) & :=   
\frac{\sum_{K\subset J} (-1)^{|K|} 
\left( \left(1+\sum_{j\in K} X_j\right)^{-y \sigma} -1\right)}{(-y \sigma) \cdot \prod_{j\in J} X_j} 
\end{align*} 
where $X_j$ ($j\in V$) and $y$ are variables in $(0,1]$. Via the change of variables $X_j = e^{-s_j}$, we have  
\[
G_X(yD) = 1+ \sum_{q,J}(-yD_q) \prod_{j\in J}(-yD_j) 
\int_{[0,1]^J} g_J(X,y) \prod_{j\in J} dX_j  
\]
where the summation range is the same as before. Note that the sum over $q,J$ is finite. It suffices to show that we can exchange the Taylor expansion of $g_J(X,y)$ in $y$ with the integral over $[0,1]^J$ to get the asymptotic expansion. For this we use Taylor's theorem:
\[
g_J(X,y) = \sum_{a=0}^{m-1}\frac{1}{a!} 
(\partial_y^a g_J)(X,0) \cdot y^a 
+ \frac{1}{m!} (\partial_y^m g_J)(X,\xi(X)) \cdot y^m, \quad 
\exists \xi(X)\in [0,y].  
\]
Note that each term $(\partial_y^a g_J)(X,0)$ is a linear combination of products of the integrands defining the local integrals $I_{\ell;\vec{m}}$, and hence is integrable on $[0,1]^J$ for the same reason that the local integrals are, namely the exponential decay arising from the bound \eqref{eqn:boundint}.
It remains to show that $|(\partial_y^m g_J)(X,y)|$ is bounded by an integrable function of $X$ (on $[0,1]^J$) which is independent of $y\in [0,1]$. As in the proof of the bound \eqref{eqn:boundint},
we can see that $h_J(X,y)$ extends to a smooth (even analytic) 
function in a neighbourhood of $[0,1]^J \times [0,1]$; 
the numerator in the definition of $h_J$ vanishes along $X_j=0$ for each $j\in J$ and thus $h_J$ does not have poles along $X_j=0$. 
Thus there exist smooth functions $f_a(X,y)$ in a neighbourhood of $[0,1]^J\times [0,1]$ such that 
\[
\partial_y^m g_J(X,y) = 
\sum_{a=0}^m f_a(X,y) \cdot \left(\textstyle\sum_{j\in J} D_j \log X_j\right)^a 
\prod_{j\in J}X_j^{y D_j} 
\]
and therefore we find a constant $C>0$ with 
\[
\left | \partial_y^m g_J(X,y) \right |\le C 
\sum_{a=0}^m \left|\textstyle\sum_{j\in J} D_j \log X_j\right|^a 
\]
for $(X,y) \in (0,1]^J \times (0,1]$. The right-hand side is integrable on $[0,1]^J$ with respect to $X$, and the lemma follows. 
\end{proof}

\subsection{Examples of the local integrals} 

Recall that $\hGamma_X$ is expanded in the $\zeta$-values $\zeta(k)$ with $k\ge 2$ (see \eqref{eq:2Gamma_X}). The identity $\hG_X = \hGamma_X$ determines some of the local integrals $I_{\ell;\vec{m}}$ \eqref{eqn:Ilm} in terms of $\zeta(k)$. In general, however, the identity only shows that certain polynomial expressions in the local integrals equal $\zeta(k)$; it seems that  individual local integrals cannot necessarily be written as polynomials in $\zeta(k)$. 

A \emph{local integral of weight $k$} is a real number belonging to the set 
\[
\{I_{\ell;\vec{m}} : \ell + |\vec{m}| + \dim(\vec{m}) = k, \  \ell\ge 1, \ \dim(\vec{m})\ge 1\} 
\] 
where we set $\dim(\vec{m})=p$ and $|\vec{m}|=\sum_{i=1}^p m_i$ 
for $\vec{m} =(m_1,\dots,m_p) \in (\Z_{\ge 0})^p$. 
We can easily see that there are $\pi(k-1) + \pi(k-2) + \cdots +\pi(1)$ many local integrals of weight $k$, where $\pi(j)$ denotes the number of partitions of $j\in\N$. On the other hand, we obtain $\pi(k)-1$ relations\footnote{Note that $\pi(k)$ is the dimension of the space of symmetric functions of degree $k$. We have one relation fewer since the coefficients in front of $D_j^k$ of the degree-$k$ parts of both $\log \hG_X$ and $\log \hGamma_X$ vanish.} in weight $k$ from the identity $\hG_X = \hGamma_X$. Therefore, as $k$ grows, the number of local integrals becomes far greater than the number of relations among them. 

\begin{table}[h] 
\begin{center} 
\begin{tabular}{r|cccccccccccc}
weight  $k$                       & 2 & 3 & 4 & 5 & 6  & 7      & 8   & 9  & 10 & 11 & 12 & 13 \\  \hline 
$\#$ of local integrals & 1 & 3 & 6 & 11 & 18 & 29 & 44 & 66 & 96  & 138 & 194 & 271 \\
$\#$ of relations         & 1 & 2 & 4 & 6 & 10   & 14 & 21 & 29 & 41 & 55 & 76      & 100
\end{tabular}
\end{center} 
\end{table}  
The local integral $I_{1;m}$ of weight $m+2$ can be computed explicitly. 
\begin{align}
\label{eq:I1m}  
\begin{split} 
I_{1;m} & = -\int_0^\infty s^m \log(1+e^{-s}) ds \\ 
& = \sum_{n=1}^\infty\int_0^\infty s^m  \frac{(-1)^n}{n} e^{-ns} ds 
= m! \sum_{n=1}^\infty \frac{(-1)^n}{n^{m+2}} \\ 
&= - m! \left(1-\frac{1}{2^{m+1}}\right) \zeta(m+2). 
\end{split} 
\end{align} 

\begin{rmk} Viewing $\hG_X$, $\hGamma$ as functions of $D=(D_1,\dots,D_m)$ and writing $\hG_X= G_{X,m}(D_1,\dots,D_m)$, $\hGamma_X=\Gamma_{X,m}(D_1,\dots,D_m)$, we have 
\begin{align*} 
G_{X,m-1}(D_1,\dots,D_{m-1}) &= G_{X,m}(D_1,\dots,D_{m-1},0), \\ 
\Gamma_{X,m-1}(D_1,\dots,D_{m-1}) & =\Gamma_{X,m}(D_1,\dots,D_{m-1},0).
\end{align*} 
Therefore we can regard $\hG_X$, $\hGamma_X$ as symmetric functions in infinitely many variables $(D_1,D_2,D_3,\dots)$, and we obtain the maximal number of relations by doing so. 
\end{rmk}

\subsubsection*{In weight 2} We have only one local integral $I_{1;0} = -\frac{1}{2} \zeta(2)$. 

\subsubsection*{In weight 3} We have 3 local integrals $I_{2;0}$, $I_{1;0,0}$, $I_{1;1}$. The identity $\hG_X = \hGamma_X$ together with \eqref{eq:I1m} shows: 
\begin{align}
\label{eq:local_integral_wt_3}
\begin{split} 
I_{1;1} & = -\frac{3}{4} \zeta(3), \\  
I_{2;0} & = -\frac{1}{4} \zeta(3), \\ 
I_{1;0,0} & = -\frac{5}{12} \zeta(3).  
\end{split} 
\end{align} 

\subsubsection*{In weight 4} We have 6 local integrals $I_{3;0}$, $I_{2;1}$, $I_{2;0,0}$, $I_{1;2}$, $I_{1;1,0}$, $I_{1;0,0,0}$. We obtain $I_{1;2} = -\frac{7}{4} \zeta(4)$ from \eqref{eq:I1m} and the following four relations from $\hG_X = \hGamma_X$: 
\begin{align} 
\label{eq:relation_wt_4}
\begin{split} 
\frac{1}{2} I_{1;2} + \frac{1}{2} I_{2;1} + \frac{1}{3} I_{3;0} + \zeta(4) &= 0, \\
4 I_{1;0}^2 + 2 I_{1;2} + \zeta(4) & = 0, \\ 
\frac{1}{2} I_{1;2} + \frac{3}{2} I_{2;1} - 2 I_{1;1,0} - \frac{3}{2} I_{2;0,0} + \zeta(4) & = 0, \\ 
2 I_{1;2} - 8 I_{1;1,0} + 4 I_{1;0,0,0} + \zeta(4) & =0, 
\end{split} 
\end{align}
where the second equation reduces to the well-known identity $\zeta(4) = \frac{2}{5} \zeta(2)^2$. 

\subsubsection*{Other examples} 
Considering the case where $\{D_j : j \in V\} = \{D_1,D_2\}$ and comparing the coefficient of $D_1^{n-1} D_2$ of the identity $\hG_X= \hGamma_X$, we get a linear relation among the local integrals: 
\[
(n-1)! \zeta(n) + \sum_{i=1}^{n-2} \binom{n-1}{i} I_{n-1-i;i} + 2 I_{n-1,0} = 0, 
\] 
or equivalently, 
\begin{equation} 
\label{eq:zeta_n}
\zeta(n) = \frac{1}{(n-1)!} \int_{-\infty}^\infty \left(\log(1+e^s)\right)^{n-1} - (\max(0,s))^{n-1} ds. 
\end{equation} 
This generalizes the first equation of \eqref{eq:relation_wt_4}. 

There are other examples of the local integrals which can be expressed in terms of the $\zeta$-values: 
\begin{align*} 
I_{2;2} & = \frac{29}{8} \zeta(5) - 2 \zeta(2) \zeta(3), \\ 
I_{2;4} & = \frac{753}{8} \zeta(7) - 42 \zeta(3) \zeta(4) - 24 \zeta(2) \zeta(5). 
\end{align*} 

\subsection{Proof of \eqref{eqn:typeII}}
We use the above results for the local integrals to prove \eqref{eqn:typeII}. 

\begin{prop}
\label{prop:asymptotics_of_integral}
Let $V$ be a bounded domain in $\R^2$ containing the origin such that $\partial V$ is affine-linear in a neighbourhood of $\Sing(\max(0,b_1,b_2))$ and intersects it transversally. Let $L$ be the total affine length of the intersection $V \cap \Sing(\max(0,b_1,b_2))$. Then we have 
\begin{multline*} 
(-\log t)^3 \int_V \left(-\log_t(1+t^{-b_1}+t^{-b_2}) - \max(0,b_1,b_2)\right) db_1db_2\\
= \zeta(2) \cdot (-\log t) \cdot  L + \zeta(3) + \bigO 
\end{multline*} 
as $t\to +0$, for some $\epsilon>0$ depending only on $V$. 
\end{prop}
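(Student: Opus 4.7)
My plan is to reduce the claim, via the change of variables $s_i = -b_i\log t$, to the equivalent statement
\[
\int_W F(s_1,s_2)\,ds_1\,ds_2 = \zeta(2)\cdot(-\log t)\cdot L + \zeta(3) + \bigO,
\]
where $W := (-\log t)V$ and $F(s_1,s_2) := \log(1+e^{s_1}+e^{s_2}) - \max(0,s_1,s_2)\ge 0$; under the rescaling the affine length of the tropical curve $T\cap W$ is $(-\log t)\cdot L$. A chamber-by-chamber estimate in the three regions of $\R^2\setminus T$ gives $F(s_1,s_2)\le C e^{-d}$ where $d=\mathrm{dist}\bigl((s_1,s_2),T\bigr)$, so $F$ concentrates exponentially on $T=\Sing(\max(0,s_1,s_2))$.

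I would then fix a cutoff $R = R(t)\to\infty$ slowly enough that $e^{-R}=O(t^\epsilon)$, and decompose $W$ into: a central disk $D_0$ of radius $R$ about the origin; three tubular strips $S_1,S_2,S_3$ of width $R$ around the three edges of $T$, with $D_0$ and a neighbourhood of $\partial W$ excised; and a remainder, which contributes only $\bigO$ by the decay estimate above. Transversality of $\partial V$ to $T$ controls the portions of $W$ near where $T$ exits. On each $S_i$, pick affine coordinates $(u,v)$ adapted to the $i$th edge, with $u$ normal and $v$ along the edge. For $v\gg 0$ one checks $F(s_1,s_2) = F_1(u) + O(e^{-v})$ with $F_1(u) := \log(1+e^u)-\max(0,u)$, and applying Fubini together with $\int_{-\infty}^\infty F_1(u)\,du = \zeta(2)$ (the $n=2$ case of \eqref{eq:zeta_n}; compare Section \ref{subsec:error}) yields $\int_{S_i} F = \zeta(2)\cdot\ell_i + O(t^\epsilon)$, where $\ell_i$ is the affine length of the portion of the $i$th edge inside $S_i$. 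Summing contributes $\zeta(2)\cdot\bigl((-\log t)L - O(R)\bigr) + O(t^\epsilon)$.

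The main obstacle is proving that the vertex piece satisfies $\int_{D_0} F = \zeta(2)\cdot(\text{affine length of }T\cap D_0) + \zeta(3) + O(t^\epsilon)$, so that the $O(R)$-linear terms reassemble into $\zeta(2)\cdot(-\log t)L$ and leave the constant $\zeta(3)$. I would split $D_0$ into the three chambers of $T$, on which $F$ takes the simple form $\log(1+X_1+X_2)$ with $X_j\in(0,1]$ (the $X_j$ being exponentials of normalized linear coordinates relative to $\max(0,s_1,s_2)$), and apply the defining identity $\log(1+X_1+X_2) = \log(1+X_1)+\log(1+X_2)+g_1(X_1,X_2)$. Extending each chamber integral to its quarter-plane incurs only $O(e^{-R})$ error; the $\log(1+X_j)$ terms then produce the linear-in-$R$ divergences, each with coefficient $\int_0^\infty\log(1+e^{-s})\,ds = -I_{1;0} = \zeta(2)/2$, which combine to exactly reproduce the missing $\zeta(2)\cdot O(R)$. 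The $g_1$ remainders sum to $3I_{1;0,0}$, and when combined with the one-variable boundary contributions coming from the chamber walls (which evaluate via the $n=3$ case of \eqref{eq:zeta_n}, $\zeta(3)=\tfrac{1}{2}\int_{-\infty}^\infty[(\log(1+e^s))^2-\max(0,s)^2]\,ds$, and the value $I_{1;0,0} = -\tfrac{5}{12}\zeta(3)$ from \eqref{eq:local_integral_wt_3}), assemble to the constant $\zeta(3)$. The delicate step is the precise bookkeeping of boundary contributions at the origin under the chamber decomposition, which must match against the $O(R)$ deficit coming from the edge strips.
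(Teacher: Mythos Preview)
Your overall strategy---rescale to $s$-coordinates, use exponential decay of $F$ away from the tropical line $T$, and decompose into edge and vertex contributions---matches the paper's. The execution, however, differs and has a gap.

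The paper first shows that any region meeting $T$ along a single edge contributes exactly $\zeta(2)\cdot(-\log t)\cdot(\text{length}) + O(t^\epsilon)$, and any region disjoint from $T$ contributes $O(t^\epsilon)$. Since two admissible domains differ by a union of such regions, it suffices to verify the claim for one explicit $V$. The paper takes a rectangle and cuts it into seven pieces $W_1,\ldots,W_7$ with \emph{affine-linear} boundaries aligned to the edges of $T$: three chamber-squares $W_1,W_2,W_3$ each give $I_{1;0,0}$ plus a linear term, three corner triangles $W_4,W_5,W_6$ each give $-I_{1;1}$, and $W_7$ is negligible. The constant is $3I_{1;0,0}-3I_{1;1}=-\tfrac{5}{4}\zeta(3)+\tfrac{9}{4}\zeta(3)=\zeta(3)$.

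Your use of a \emph{round} disk $D_0$ is the problematic step. On each chamber of $D_0$, the one-variable pieces $\int\log(1+e^{-u_j})\,du_1du_2$ depend on the shape of the region in $u$-coordinates; for a quarter-disk the $u_2$-slice at height $u_1$ has length $\sqrt{R^2-u_1^2}$, producing $R\cdot\tfrac{\zeta(2)}{2}+O(R^{-1})$, and $R^{-1}$ is not $O(t^\epsilon)$. These $O(R^{-1})$ terms do cancel against matching corrections from the curved boundary where each strip $S_i$ meets $\partial D_0$, but your claim $\int_{S_i}F=\zeta(2)\cdot\ell_i+O(t^\epsilon)$ is only valid when the inner boundary of $S_i$ is affine-linear and transverse to the edge---precisely what fails for the circular arc. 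This is exactly the ``delicate bookkeeping'' you flag, and it is where the paper's choice of affine-linear pieces pays off.

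Finally, the constant $\zeta(3)$ does not come in via the $n=3$ case of \eqref{eq:zeta_n}; that identity belongs to the other local model $y_1y_2y_3=1+x$ (Section~\ref{sec:typei}). Here the second ingredient, alongside $I_{1;0,0}=-\tfrac{5}{12}\zeta(3)$, is $I_{1;1}=-\tfrac{3}{4}\zeta(3)$ from \eqref{eq:I1m}: the corner triangles reduce, after integrating out one variable, to $\int_0^\infty s\log(1+e^{-s})\,ds=-I_{1;1}$.
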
 
\begin{proof} 
We decompose $V$ into small pieces $V_1,V_2,V_3,\dots$ and evaluate the integral locally. 
The integrand $-\log_t(1+t^{-b_1}+t^{-b_2}) -\max(0,b_1,b_2)$ is exponentially close to zero away from the tropical line $\Sing(\max(0,b_1,b_2))$ (see Figure \ref{fig:Maslov}). Hence, for any bounded domain $V_1$ with $\overline{V_1}\cap \Sing(0,b_1,b_2)=\emptyset$, we have 
\[
(-\log t)^3 \int_{V_1} (-\log_t(1+t^{-b_1}+t^{-b_2}) - \max(0,b_1,b_2)) db_1db_2 = O(t^\epsilon) 
\]
for some $\epsilon>0$ depending on $V_1$. Consider a domain $V_2\subset \R^2$ such that $\overline{V_2}$ intersects the tropical line $\Sing(\max(0,b_1,b_2))$ only along the edge $\{b_1=0, b_2<0\}$. We again assume that $\partial V_2$ is affine-linear in a neighbourhood of the edge and is transverse to it. Since the contribution (to the integral) away from the edge $\{b_1=0,b_2<0\}$ is exponentially small, we may assume that $V_2$ is of the form 
\[
-B<b_1<B, \quad f(b_1)<b_2<g(b_1) 
\]
for some negative affine-linear functions $f,g$. Because $t^{-b_2}$ is exponentially small in this region, we have 
\begin{align*}
(-\log t)^3&\int_{V_1}  \left(-\log_t(1+t^{-b_1} + t^{-b_2})-\max(0,b_1,b_2) \right) db_1db_2 \\
& = (-\log t)^3 \int_{-B}^B (-\log_t(1+t^{-b_1})-\max(0,b_1)) (g(b_1)-f(b_1))db_1+\bigO \\ 
& = (-\log t) \cdot L_2 \cdot \int_{ B\log t}^{-B \log t} \left( \log(1+e^x) - \max(0,x)\right) dx 
+ \bigO \\  
& = (-\log t) \cdot L_2 \cdot \zeta(2) + \bigO 
\end{align*} 
where $L_2 = g(0)-f(0)$ is the affine length of $V_2 \cap \Sing(\max(0,b_1,b_2))$ 
and we used \eqref{eq:zeta_n} in the last step. By the symmetry of the integrand under the affine transformation $(b_1,b_2)\to (-b_2,b_1-b_2)$, the same is true for regions intersecting the other edges. It now suffices to prove the statement for one particular $V$. Let $V$ be the rectangular region given by $-B<b_1<2B$, $-B<b_2<B$ with $B>0$ (see Figure \ref{fig:domain_decomp}). 
We decompose it into seven regions $W_1,\dots,W_7$ as shown. We have 
\begin{align*} 
&(-\log t)^3 \int_{W_1} \left(- \log_t(1+t^{-b_1}+t^{-b_2}) -\max(0,b_1,b_2)\right)db_1db_2  \\
& = 
\int_{[0,(-\log t)\cdot B]^2} \log(1+e^{-s_1} + e^{-s_2}) ds_1 ds_2  \\
& = \int_{[0,(-\log t)\cdot B]^2} \left(\log(1+e^{-s_1} + e^{-s_2}) 
- \log(1+e^{-s_1}) - \log(1+e^{-s_2})\right) ds_1 ds_2 \\ 
& \qquad + 2 (-\log t) \cdot B \cdot \int_{[0,(-\log t)\cdot B]} \log(1+e^{-s}) ds \\ 
& =I_{1;0,0} - 2 (-\log t) \cdot B \cdot  I_{1;0} + \bigO \\
& = -\frac{5}{12} \zeta(3) + (-\log t) \cdot B \cdot \zeta(2) + \bigO, 
\end{align*} 
where we used \eqref{eq:I1m} and \eqref{eq:local_integral_wt_3} in the last step. The integrals over the regions $W_2,W_3$ are the same by the affine symmetry. The integral over $W_4$ is given by
\begin{align*} 
(-\log t)^3  \int_{W_4} &\left(-\log_t(1+t^{-b_1}+t^{-b_2})-\max(0,b_1,b_2) \right) db_1db_2 \\
& = (-\log t)^3 \int_{0\le b_1 \le B, B\le b_1-b_2 \le B+b_1} \log_t(1+ t^{b_1} +t^{b_2-b_1}) db_1db_2 \\ 
& = \int_{[0,(-\log t)\cdot B]} x \log(1+e^{-x}) dx  + \bigO \qquad \text{($b_2$ integrated out)} \\ 
& = -I_{1;1} + \bigO = \frac{3}{4} \zeta(3) + \bigO,   
\end{align*} 
where we used \eqref{eq:I1m} in the last step. 
The integrals over $W_5,W_6$ are the same by the affine symmetry. The integral over $W_7$ is of order $\bigO$. The conclusion follows by summing up these contributions. 
\end{proof} 

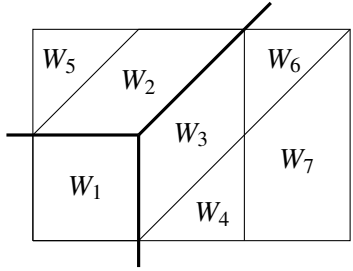
\begin{figure}
\centering 
\begin{tikzpicture}[x=1pt,y=1pt] 
\draw (110,10)--(110,50)--(110,90)--(230,90)--(230,10)--(110,10);
\draw (110,10)--(110,50)--(150,90)--(190,90)--(190,50)--(150,10)--(110,10);
\draw (230,90)--(190,50)--(190,10); 

\draw (130,30) node {$W_1$};  
\draw (150,70) node {$W_2$}; 
\draw (170,50) node {$W_3$};  
\draw (120,78) node {$W_5$}; 
\draw (178,20) node {$W_4$};  
\draw (205,78) node {$W_6$}; 
\draw (210,40) node {$W_7$}; 

\draw[very thick] (150,50)--(200,100); 
\draw[very thick] (150,50)--(100,50); 
\draw[very thick] (150,50)--(150,0);  

\end{tikzpicture} 
\caption{Decomposition of the domain $V$} 
\label{fig:domain_decomp} 
\end{figure} 

\begin{rmk}
If we only assume that $\partial V$ is \emph{smooth} (instead of linear) in a neighbourhood of the tropical line $\Sing(\max(0,b_1,b_2))$ and intersects it transversally, we get the same result except that the error term $\bigO$ in the right-hand side must be replaced with $O((-\log t)^{-1})$. This is because the 2-jet of $g-f$ (in the above proof) contributes to the term of order $(-\log t)^{-1}$.  More precisely, we are able to show the identity of distributions: 
\begin{multline*} 
-\log_t(1+t^{-b_1}+t^{-b_2}) = \max(0,b_1,b_2) \\ 
 - \frac{\zeta(2)}{ (-\log t)^{2}}  \delta_{\Sing(\max(0,b_1,b_2))} -\frac{\zeta(3)}{(-\log t)^{3}} \delta_{(0,0)} 
+ O\left(\frac{1}{(-\log t)^{4}}\right)  
\end{multline*} 
where $\delta_A$ means the delta measure supported on $A$ (with $A$ equipped with the affine measure).  
We plan to explore such distributions on tropical spaces in a future paper. 
\end{rmk}

\section{Periods of cycles that are mirror to line bundles}
\label{sec:phase} 

\subsection{Varying the phases}

Let $t>0$ be a small positive number and 
let $\theta = (\theta_q)_{q\in V}$ be a collection of real numbers. 
Recall that we define
\[
f_{t,\theta}(z) := \sum_{q\in V} e^{\iu \theta_q} t^{\lambda_q} z^q
\]
and
\[
\Zring_{t,\theta} := \{f_{t,\theta}(z) = 1\} \subset P_{\C^*}. 
\]
Let $C_{t,\theta}\subset \Zring_{t,\theta}$ be the parallel transport of the positive real cycle $C_t^+ \subset \Zring_t$ as we vary $\theta$ continuously from $\theta=0$. Let $\Omega_{t,\theta}$ be the holomorphic volume form on $\Zring_{t,\theta}$ defined by 
\[
\Omega_{t,\theta} = \left. \frac{d\log z_0 \wedge d\log z_1 \wedge 
\cdots \wedge d\log z_n}{df_{t,\theta}}\right|_{\Zring_{t,\theta}}  
\]
where $(z_0,\dots,z_n)$ are coordinates on $P_{\C^*} \cong (\C^*)^{n+1}$. 
In this section, we prove the following generalization of Theorem \ref{thm:periodR}: 
\begin{thm} 
\label{thm:cycle_with_phase}
We have
\[
\int_{C_{t,\theta}\subset Z_{t,\theta}} \Omega_{t,\theta} 
= \int_X t^{-\omega_\lambda} \cdot e^{-\sum_{q\in V} \iu \theta_q D_q} \cdot 
\hG_X + \bigO \quad \text{as $t\to +0$, for some $\epsilon>0$.}
\]
\end{thm}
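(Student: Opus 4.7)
The plan is to adapt the proof of Theorem~\ref{thm:periodR} by tracking the phase factors introduced by $\theta$ throughout the tropical analysis. Since $|e^{\iu\theta_q} t^{\lambda_q}| = t^{\lambda_q}$, the tropical decomposition $P_\R = \bigcup_{q,K} U^{q,K}$ of Section~\ref{sec:decomposing-domain} is unchanged, and it induces a corresponding decomposition $C_{t,\theta} = \bigcup_{q,K} C_{t,\theta}^{q,K}$ of the parallel-transported cycle.

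The main new input is a local phase unwinding in each region $U^{q,K}$. Since the simpliciality of $\Sigma_\lambda$ makes $\{q\} \sqcup K$ a basis of a cone of $P_\R$, there is a unique $\alpha^{q,K} \in P_\R$ with $\langle s, \alpha^{q,K} \rangle = \theta_s$ for all $s \in \{q\} \sqcup K$. Under the substitution $z = e^{-\iu \alpha^{q,K}} u$ we have $f_{t,\theta}(z) = f_{t,\theta'}(u)$, where $\theta'_s = 0$ for $s \in \{q\} \sqcup K$ and $\theta'_m = \theta_m - \langle m, \alpha^{q,K} \rangle$ otherwise; the remaining nonzero phases multiply monomials of size $\bigO$ on $U^{q,K}$. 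Consequently $C_{t,\theta}^{q,K}$ is, to the required order, the rotate $e^{-\iu \alpha^{q,K}} \cdot (C_t^+ \cap U^{q,K})$, and by the invariance $\Omega_{t,\theta}(e^{-\iu\psi}u) = \Omega_{t, \theta - \langle \cdot, \psi\rangle}(u)$ the local integrand agrees with $\Omega_t$ on the $u$-coordinates up to $\bigO$ error.

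I would then run through the local approximations of Section~\ref{sec:approx_vol_polytope} and the Duistermaat--Heckman identification of Section~\ref{sec:duistermaat-heckman} in parallel with the proof of Theorem~\ref{thm:periodR}. The new feature is that the local rotations $\alpha^{q,K}$ do not patch continuously across the boundaries $U^{q,K} \cap U^{q',K'}$; the globally-defined cycle $C_{t,\theta}$ has transition contributions reconciling these discrepancies, which after the same inclusion--exclusion insert a factor $e^{-\iu \sum_{q\in V} \theta_q D_q}$ into the cohomological integrand on $Y_{\Delta_\lambda}$ produced by Lemma~\ref{lem:DH}. Summing the regional contributions reproduces the formula with $\hG_X$ in place of $\hGamma_X$, proving the theorem.

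The hard part will be the rigorous accounting of these transition contributions: one must verify that the local shifts $\alpha^{q,K}$, which a priori depend on the pair $(q,K)$, recombine under Duistermaat--Heckman to give the uniform cohomological phase $e^{-\iu\sum_q \theta_q D_q}$ rather than some cocycle of the tropical cover. This hinges on the exact sequence $P_\R \to \R^V \to H^2(Y_{\Delta_\lambda};\R)$ encoding the linear relations among the toric divisors $D_q$, which ensures that only the class $\sum_q \theta_q D_q$ of $\theta$ actually enters the final answer.
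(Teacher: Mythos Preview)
Your central claim---that after the local rotation by $\alpha^{q,K}$ the piece $C_{t,\theta}^{q,K}$ coincides with $C_t^+\cap U^{q,K}$ to the required order---is where the argument breaks. The discrepancy between the rotated piece and the real piece is \emph{not} of order $O(t^\epsilon)$; it is of order $O(1/\log t)$ in the imaginary (phase) direction, and this contributes at every polynomial order in $\log t$. Concretely: in the paper's construction the cycle is $i_t(\Phi_t(B_t))$ with $\Phi_t(p)=p+\iu\phi(p)/\log t$, and the phase function $\phi$ is required to satisfy $\langle m,\phi(p)\rangle=-\theta_m$ whenever $\beta_m(p)\le\kappa$. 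On $B_t^{q,K}$ this pins down $\langle s,\phi\rangle$ for $s\in\{q\}\sqcup K$ (matching your $\alpha^{q,K}$), but it \emph{also} pins down $\langle m,\phi\rangle$ for every $m$ indexing a nearby facet of the polytope $F'_{q,K}$, since $\beta_m\approx\epsilon<\kappa$ there. Your constant $\alpha^{q,K}$ does not satisfy these extra conditions (and cannot, unless $\theta$ happens to lie in the image of $P_\R\to\R^V$). So after your rotation the $c_j$-coordinates of the cycle still carry imaginary parts of size $O(1/\log t)$ near those facets; this is not a boundary artefact to be absorbed in ``transition contributions'' but is present throughout the region and changes the value of $\int_{B_t^{q,K}}\Phi_t^*(d\vol_{q,K})$.

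The paper captures exactly this via the notion of \emph{complex volume} (Section~5.3): the integral over each piece becomes the volume of a polytope whose facets have been shifted into $\C^m$ by imaginary amounts $\iu(\theta_m-\theta_q)/(-\log t)$, and Lemma~5.5 shows this depends only on the resulting complex hyperplane arrangement. One then gets the phase by analytically continuing the Duistermaat--Heckman formula, substituting $\lambda_k\mapsto\lambda_k+\iu\theta_k/\log t$; this is what inserts $e^{-\iu\sum_q\theta_q D_q}$ into the cohomological integrand. No separate ``transition'' accounting is needed, because a single global smooth $\phi$ is used from the outset. Your exact-sequence remark is correct as a consistency check---it explains why the answer depends only on the class of $\theta$ in $\R^V/P_\R\cong H^2(Y_{\Delta_\lambda})$---but it is not the mechanism that produces the phase. (A smaller point: your $\alpha^{q,K}$ is only unique when $\{q\}\sqcup K$ is a full basis of $Q_\R$; for $|K|<n$ it is determined only up to the annihilator, which is another symptom that the constant-rotation model is incomplete.)
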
 

We can formally obtain this from Theorem \ref{thm:periodR} by substituting $\lambda_q+\iu(\theta_q/\log t)$ for $\lambda_q$. Note however that this does not logically follow from Theorem \ref{thm:periodR} since  in general `analytic continuation' does not commute with `asymptotic expansion'. We will justify this substitution via a tropical calculation similar to Section \ref{sec:periodR}. 

When $\theta_q = 2\pi \nu_q$, we have $C_t^{(\nu)} = C_{t,\theta}$. Therefore, Theorem \ref{thm:Bat_Gamma_lb} follows by combining Theorems \ref{thm:cycle_with_phase} and \ref{thm:gamma}.

\subsection{A tropical construction of the cycle $C_{t,\theta}$}
\label{sec:construction_Cttheta} 
We first construct a cycle approximately contained in $\Zring_{t,\theta}$ by ``sliding'' the positive real cycle $B_t=\Log_t(C_t^+)$ in the purely imaginary direction. Then we modify it to an actual cycle $C_{t,\theta}$ in $\Zring_{t,\theta}$. A similar construction for ``semi-tropical'' cycles appears in the work of Abouzaid \cite{Abouzaid:homogeneous, Abouzaid:HMS_toric}, Fang, Liu, Treumann and Zaslow \cite{FLTZ:T-dual}, and Hanlon \cite{Hanlon:monodromy}. 
 
Let $\kappa>0$ be a positive real number and 
let $N_\kappa(\Delta_\lambda)\subset P_\R$ denote the $\kappa$-neighbourhood of $\Delta_\lambda$
\[
N_\kappa(\Delta_\lambda) = \{ p\in P_\R: \beta_q(p) \ge -\kappa, \ \forall q\in V\}.  
\]
We choose $\kappa>0$ sufficiently small so that the following holds\footnote{\label{foot:correction} After the publication of the present paper, Yamamoto \cite[Remark 3.2]{Yamamoto:period_tropical} pointed out that we cannot choose $\kappa>0$ so that the condition (ii) holds and that we need to restrict to the $\kappa$-neighbourhood $N_\kappa(\partial \Delta_\lambda) = \{p \in P_\R : -\kappa\le \min_{q\in V} \beta_q(p)\le \kappa \}$ of the boundary of $\Delta_\lambda$ instead of $N_\kappa(\Delta_\lambda)$. (We should impose the condition (ii) only for $p\in N_\kappa(\partial \Delta_\lambda)$). Because of this, Lemma \ref{lem:grad} below is not correct as stated. Yamamoto \cite[Section 3]{Yamamoto:period_tropical} also provided detailed corrections. We thank Yamamoto for finding the errors and fixing them.}: 
\begin{itemize} 
\item[(i)]  for every subset $K\subset V$, if the regions $N_\kappa(\Delta_\lambda) \cap\{\beta_q \le \kappa\}$ with $q\in K$ intersect, then the facets $\Delta_\lambda\cap \{\beta_q=0\}$ with $q\in K$ intersect; in this case $K$ is linearly independent as a subset of $Q_\R$. 
\item[(ii)]  for every $p\in P_\R$, $K=\{k \in V: \beta_k(p) \le \min_{q\in V}(\beta_q(p))+\kappa\}$ is linearly independent as a subset of $Q_\R$. 
\end{itemize} 
We choose a smooth function $\phi\colon N_\kappa(\Delta_\lambda) \to P_\R$ such that the following holds for every $q\in V$: 
\begin{equation} 
\label{eq:bdry_cond_phi} 
\langle q,\phi(p)\rangle = - \theta_q \qquad 
\text{whenever} \quad \beta_q(p)\le \kappa. 
\end{equation} 
Such a function exists because of the above condition (i). 
Define a map $\Phi_t\colon N_\kappa(\Delta_\lambda) \to P_\C$ by 
\[
\Phi_t(p) = p + \iu \frac{\phi(p)}{\log t}.  
\]
We naturally extend the functions $\beta_q \colon P_\R \to \R$, 
$i_t \colon P_\R \to P_{\C^*}$ to the functions $\beta_q\colon P_\C\to \C$, $i_t \colon P_\C \to P_{\C^*}$ by the formulae:  
\[
\beta_q(p) = \langle q, p\rangle + \lambda_q, \qquad 
i_t(p_0,p_1,\dots,p_n) = (t^{p_0},t^{p_1},\dots,t^{p_n}). 
\]
The following is immediate from the definition. 
\begin{lem}
\label{lem:approx_cycle} 
For $p \in N_\kappa(\Delta_\lambda)$, 
we have that 
$|f_{t,\theta}(i_t(\Phi_t(p))) - f_t(i_t(p))| \le 2|V| t^\kappa$. 
\end{lem}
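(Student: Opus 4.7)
The plan is a direct computation which exploits the boundary condition \eqref{eq:bdry_cond_phi} on the auxiliary function $\phi$. The key preliminary observation is the identity $t^{\iu s/\log t} = e^{\iu s}$ for real $s$, which implies that $i_t(\Phi_t(p))_j = t^{p_j} e^{\iu \phi_j(p)}$, and hence for each $q \in V$,
\[
e^{\iu \theta_q}\, t^{\lambda_q}\, \bigl(i_t(\Phi_t(p))\bigr)^q = t^{\beta_q(p)} \cdot e^{\iu(\theta_q + \langle q,\phi(p)\rangle)},
\]
while $t^{\lambda_q}\, (i_t(p))^q = t^{\beta_q(p)}$. Subtracting termwise and summing over $q$ yields
\[
f_{t,\theta}(i_t(\Phi_t(p))) - f_t(i_t(p)) = \sum_{q\in V} t^{\beta_q(p)} \bigl( e^{\iu(\theta_q + \langle q,\phi(p)\rangle)} - 1 \bigr).
\]

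Next I would estimate this sum by splitting it according to whether $\beta_q(p) \le \kappa$ or $\beta_q(p) > \kappa$. In the first case, the defining property \eqref{eq:bdry_cond_phi} of $\phi$ gives $\langle q, \phi(p)\rangle = -\theta_q$, so the exponential factor equals $1$ and the summand vanishes identically. In the second case, the exponential factor has absolute value at most $2$, while $t^{\beta_q(p)} \le t^\kappa$ for $0 < t < 1$. Summing over the at most $|V|$ surviving terms yields the stated bound $2|V|\, t^\kappa$. I do not expect any genuine obstacle: the content of the lemma is precisely that $\phi$ has been arranged via \eqref{eq:bdry_cond_phi} to kill the potentially-dominant terms of the difference, so that only the exponentially-small terms (those with $\beta_q(p) > \kappa$) remain.
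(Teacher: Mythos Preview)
Your proof is correct and follows essentially the same approach as the paper: compute the two sums termwise, observe via \eqref{eq:bdry_cond_phi} that the summands with $\beta_q(p)\le\kappa$ coincide, and bound each remaining summand by $2t^\kappa$. The only difference is cosmetic---you spell out the identity $t^{\iu s/\log t}=e^{\iu s}$ explicitly, whereas the paper simply writes down the formula for $f_{t,\theta}(i_t(\Phi_t(p)))$.
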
 
\begin{proof} 
We have 
\begin{align*} 
f_t(i_t(p)) & = \sum_{q\in V} t^{\beta_q(p)} \\ 
f_{t,\theta}(i_t(\Phi_t(p))) &= \sum_{q\in V} e^{\iu(\theta_q+\langle q,\phi(p)\rangle)} t^{\beta_q(p)}. 
\end{align*} 
By the condition \eqref{eq:bdry_cond_phi}, the summands with $\beta_q(p) \le \kappa$ coincide. Since the summands with $\beta_q(p)>\kappa$ have norm bounded by $t^\kappa$, the conclusion follows. 
\end{proof} 
The lemma implies that the cycle $i_t(\Phi_t(B_t))$ is approximately contained in 
the hypersurface $\Zring_{t,\theta}$. 
We shall make it an actual cycle in $\Zring_{t,\theta}$ by a $C^1$-small perturbation. In the following, we will fix a Hermitian norm on $P_\C\cong \C^{n+1}$. 

\begin{prop}
\label{prop:delta} 
Set $R_t=\{p\in P_\R: \frac{1}{2} \le f_t(i_t(p))\le \frac{3}{2}\}$. 
For sufficiently small $t>0$, there exists a smooth function $\delta_t \colon R_t \to P_\C$ such that 
 \begin{itemize}
\item[(a)] we have $f_{t,\theta}(i_t(\Phi_t(p)+\delta_t(p)))= f_t(i_t(p))$ for $p\in R_t$, and   
\item[(b)] $\|\delta_t\|_{C^1}=O(t^\kappa)$, where 
$\|\cdot\|_{C^1}$ denotes the $C^1$-norm over the region $R_t$. 
\end{itemize} 
\end{prop}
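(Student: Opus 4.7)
The plan is to view (a) as a nonlinear equation
\[
F_t(p,\delta) \;:=\; f_{t,\theta}(i_t(\Phi_t(p)+\delta)) - f_t(i_t(p)) \;=\; 0
\]
for $\delta = \delta_t(p)$, and solve it by the quantitative implicit function theorem. Lemma \ref{lem:approx_cycle} already supplies $|F_t(p,0)| \le 2|V|\,t^\kappa$ on $N_\kappa(\Delta_\lambda)$; a brief check using $f_t(i_t(p)) \le 3/2$ shows $R_t \subset N_\kappa(\Delta_\lambda)$ for $t$ small, so $F_t(\cdot,0) = O(t^\kappa)$ on $R_t$.

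The core step is a quantitative invertibility estimate for the linearization. Using $\beta_q(\Phi_t(p)+\delta) = \beta_q(\Phi_t(p)) + \langle q,\delta\rangle$, a direct computation gives
\[
\partial_\delta F_t(p,0)\cdot v \;=\; (\log t)\,\langle\xi_t(p),v\rangle,
\qquad \xi_t(p) \;:=\; \sum_{q\in V} e^{\iu\theta_q}\,t^{\beta_q(\Phi_t(p))}\,q \;\in\; Q_\C.
\]
The main claim, which is the principal obstacle, is that $|\xi_t(p)|$ is bounded below by a positive constant, uniformly on $R_t$ and in $t$. To see this, let $K(p) := \{q\in V : \beta_q(p)\le\kappa\}$. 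For $q\in K(p)$, the defining property \eqref{eq:bdry_cond_phi} of $\phi$ gives $e^{\iu\theta_q}\,t^{\beta_q(\Phi_t(p))} = t^{\beta_q(p)} > 0$; for $q\notin K(p)$ the coefficient has modulus $< t^\kappa$. By hypothesis (i) on $\kappa$, the set $K(p)$ is linearly independent in $Q_\R$, and the constraint $\sum_q t^{\beta_q(p)} \ge 1/2$ on $R_t$ forces $\sum_{q\in K(p)} t^{\beta_q(p)} \ge 1/4$ once $t$ is small. Since a positive combination of linearly independent nonzero vectors whose coefficients sum to at least $1/4$ has norm bounded below by some $c_{\min} > 0$ (minimizing over the finitely many subsets of $V$), we conclude $|\xi_t(p)| \ge c_0 > 0$, and hence $\|\partial_\delta F_t(p,0)\| \gtrsim |\log t|$.

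With this in hand, fix a Hermitian metric on $P_\C$ and let $u(p) \in P_\C$ be the metric dual of $\overline{\xi_t(p)} \in Q_\C$, so that $\langle\xi_t(p),u(p)\rangle = |\xi_t(p)|^2$ and $u$ depends smoothly on $p$. Restricting $F_t(p,\cdot)$ to the complex line $\C\cdot u(p)$ produces a scalar function with initial value $O(t^\kappa)$, derivative at $0$ of magnitude $\gtrsim |\log t|$, and second derivative $O((\log t)^2)$; the quantitative one-variable IFT then yields a smooth $\alpha_t(p)$ with $F_t(p,\alpha_t(p)u(p))=0$ and $|\alpha_t(p)| = O(t^\kappa/|\log t|)$. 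Setting $\delta_t(p) := \alpha_t(p)\,u(p)$ proves (a) and gives $\|\delta_t\|_{C^0} = O(t^\kappa/|\log t|)$. For the $C^1$ bound one exploits that $a_q(p) := e^{\iu(\theta_q+\langle q,\phi(p)\rangle)} - 1$ vanishes identically on $\{\beta_q(p)\le\kappa\}$, so $da_q$ does too; hence $\partial_p F_t(p,0)$ only receives contributions from $q\notin K(p)$ and satisfies $\|\partial_p F_t(p,0)\| = O(|\log t|\,t^\kappa)$. Implicit differentiation of $F_t(p,\delta_t(p))=0$ combined with the lower bound on $\|\partial_\delta F_t\|$ then yields $\|d\delta_t\|_{C^0} = O(t^\kappa)$, which is (b).
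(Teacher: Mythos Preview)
Your argument is correct and takes a genuinely different route from the paper. The paper constructs $\delta_t$ by a gradient flow: it integrates the vector field $\xi_t(p)\cdot\dfrac{\grad g_t}{|\grad g_t|^2}$ (with $g_t=f_{t,\theta}\circ i_t$) from $\Phi_t(p)$ for unit time, uses the lower bound $|\grad g_t|\gtrsim(-\log t)$ (their Lemma~\ref{lem:grad}, which is essentially your bound $|\xi_t(p)|\ge c_0$) to show the flow exists, and then derives the $C^1$ estimate from a Gronwall argument applied to the linearized flow. Your implicit--function--theorem approach replaces the ODE machinery by a direct Newton--Kantorovich estimate on the one complex variable $\alpha$ along the line $\C\cdot u(p)$; this is arguably more elementary and gives the same $C^0$ bound $O(t^\kappa/|\log t|)$. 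The paper's flow argument, on the other hand, adapts more readily to the $C^m$ bounds noted in their remark following the proof.

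Two small points where your write-up is a bit loose (both routine to repair). First, the implicit differentiation for the $C^1$ bound should be applied to the \emph{constrained} equation $F_t(p,\alpha_t(p)u(p))=0$, not to $F_t(p,\delta_t(p))=0$ directly, since $\partial_\delta F_t$ has one-dimensional range; this introduces a term $\alpha_t\cdot(\partial_\delta F_t)\cdot du(p)$, and one must check $|du|=O(|\log t|)$ (which follows since $\partial_p\xi_t=O(|\log t|)$ and $|\xi_t|$ is bounded below). Second, you estimate $\partial_p F_t(p,0)$ rather than $\partial_p F_t(p,\delta_t(p))$; the difference is $O\big(\sup|\partial_p\partial_\delta F_t|\cdot|\delta_t|\big)=O\big((\log t)^2\cdot t^\kappa/|\log t|\big)=O(|\log t|\,t^\kappa)$, so the conclusion is unaffected.
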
 

Using the function $\delta_t$ in this proposition, we define 
\[
C_{t,\theta} := i_t(\tPhi_t(B_t))
\]
where $\tPhi_t\colon R_t \to P_\C$ is defined by $\tPhi_t(p) = \Phi_t(p) + \delta_t(p)$. This is a cycle contained in $\Zring_{t,\theta}$ and is homeomorphic to a sphere. 

\begin{example} We give an example of the phase-shifting function $\phi\colon N_\kappa(\Delta_\lambda) \to P_\R$ in the case of $Y_{\Delta_\lambda} = \C\bP^2$. In Figure \ref{fig:phase}, we present $\phi$ as a vector field near the boundary of the moment polytope $\Delta_\lambda$; the boundary condition \eqref{eq:bdry_cond_phi} says that $p+\phi(p)$ with $p\in \partial \Delta_\lambda$ lies on the boundary of a shifted polytope (dotted line). Following the method in  \cite{FLTZ:T-dual}, we can give such $\phi$ via a piecewise linear function $f$ on the fan (drawn in thin lines) dual to the polytope $\Delta_\lambda$: $\phi$ can be given as the gradient of a smoothing of the piecewise-linear function $f$ that takes the values  $\theta_1$, $\theta_2$, $\theta_3$, respectively, at the primitive generators $(-1,0)$, $(0,-1)$, $(1,1)$ of the 1-dimensional cones. 
\begin{figure}[h]
\centering 
\begin{tikzpicture} 
\draw[very thick] (-1,-1) -- (2,-1) -- (-1,2) -- (-1,-1); 
\draw[very thin] (0,0)--(2,2); 
\draw[very thin] (-3,0) -- (0,0) -- (0,-2); 
\draw[dotted] (-1.5,-0.5) -- (1,-0.5) -- (-1.5,2)--(-1.5,-0.5); 

\draw[thin, ->] (-1,-0.4) -- (-1.5,0.1); 
\draw[thin, ->] (-1,-0.7) -- (-1.5, -0.2); 
\draw[thin, ->] (-1,-1) -- (-1.5,-0.5); 
\draw[thin, ->] (-0.6,-1) -- (-1.1,-0.5); 
\draw[thin, ->] (-0.2,-1) -- (-0.7,-0.5); 

\draw[thin, ->] (1,-1) -- (0,-0.5); 
\draw[thin,->] (1.5,-1) -- (0.5,-0.5); 
\draw[thin,->] (2,-1) -- (1,-0.5); 
\draw[thin, ->] (1.6,-0.6) -- (0.6,-0.1); 
\draw[thin, ->] (1.2,-0.2) -- (0.2,0.3); 

\draw[thin, ->] (0.2,0.8) -- (-0.3,0.8); 
\draw[thin, ->] (-0.1,1.1) -- (-0.6,1.1); 
\draw[thin, ->] (-0.4,1.4) -- (-0.9,1.4); 
\draw[thin,->] (-0.7,1.7) -- (-1.2,1.7); 
\draw[thin, ->] (-1,2) -- (-1.5,2); 
\draw[thin,->] (-1,1.65) -- (-1.5,1.65); 
\draw[thin, ->] (-1,1.3) -- (-1.5,1.3); 
\draw[thin, ->] (-1,0.9) -- (-1.5,0.9); 
\draw[thin, ->] (-1,0.5) -- (-1.5,0.5); 

\draw (-1.5,-1.5) node {\scriptsize $\phi = (-\theta_1,-\theta_2)$}; 
\draw (3,-0.2) node {\scriptsize $\phi = (\theta_2+\theta_3, -\theta_2)$}; 
\draw (-3,1) node {\scriptsize $\phi = (-\theta_1, \theta_1+\theta_3)$}; 

\end{tikzpicture}
\caption{A phase-shifting function $\phi$ as a vector field ($Y_{\Delta_\lambda}= \C\bP^2$).}
\label{fig:phase}
\end{figure}
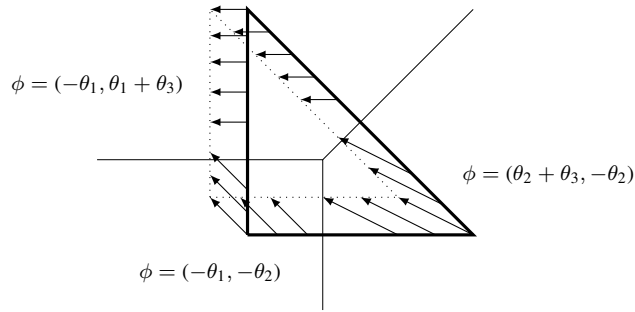
\end{example}

We end this subsection with the proof of Proposition \ref{prop:delta}. It is elementary and based on a standard method in tropical geometry. The uninterested reader can safely skip the proof since the details will not be used later. 
\begin{proof}[Proof of Proposition \ref{prop:delta}]
The region $R_t$ is contained in $N_\kappa(\Delta_\lambda)$ for sufficiently small $t>0$ because $f_t(i_t(p))\ge t^{-\kappa}$ for $p\in P_\R \setminus N_\kappa(\Delta_\lambda)$. In particular, $\Phi_t(p)$ is defined for $p\in R_t$. Define a holomorphic function $g_t\colon P_\C \to \C$ by $g_t(p) = f_{t,\theta}(i_t(p)) = \sum_{q\in V} e^{\iu\theta_q} t^{\beta_q(p)}$. By Lemma \ref{lem:approx_cycle}, $\Phi_t(p)$ with $p\in R_t$ is ``close'' to the fibre of the map $g_t$ at $f_t(i_t(p))$.  We will flow $\Phi_t(p)$ to a nearby point $\Phi_t(p)+\delta_t(p)$ lying in the fibre $g_t^{-1}(f_t(i_t(p)))$ by the gradient vector field of $g_t$. 
Let 
\[
\grad g_t = \left(\overline{\parfrac{g_t}{a_0}},\dots,
\overline{\parfrac{g_t}{a_n}}\right)
\]
be the gradient vector field of $g_t$ on $P_\C$ and define $\xi \colon R_t\to \C$ by 
\[
\xi_t(p) = f_t(i_t(p)) - g_t(\Phi_t(p)). 
\]
Lemma \ref{lem:approx_cycle} gives $|\xi_t(p)| \le 2 |V|t^\kappa$. 
Consider the differential equation for an unknown function $c(p;s) \colon R_t\times [0,1] \to P_\C$ 
\[
\frac{d}{ds} c(p;s) = \xi_t(p) \cdot \frac{\grad g_t}{|\grad g_t|^2}(c(p;s))
\]
with the initial condition $c(p;0) = \Phi_t(p)$. Suppose we have a global solution $c(p;s)$. Then the differential equation shows that $g_t(c(p;s))=g_t(\Phi_t(p))+s \xi_t(p)$ and therefore $c(p;1)$ lies in the fiber $g_t^{-1}(f_t(i_t(p)))$. Setting $c(p;1) = \Phi_t(p) + \delta_t(p)$, we have part (a) of the proposition. We will show that a solution exists and that the $C^1$-norm of $\delta_t$ is of order $t^\kappa$. 

For $p\in R_t$, let $s_p$ be the supremum of $\overline{s}\in [0,1]$ such that the flow $s\mapsto c(p;s)$ exists on the interval $[0,\overline{s})$. As discussed, $g_t(c(p;s))$ with $s\in [0,s_p)$ lies in the interval connecting $g_t(\Phi_t(p))$ and $f_t(i_t(p))$. Since $|f_t(i_t(p))-g_t(\Phi_t(p))| \le 2|V|t^\kappa$ by Lemma \ref{lem:approx_cycle} and $\frac{1}{2}\le f_t(i_t(p))\le \frac{3}{2}$, we may assume that $|g_t(c(p;s))|\ge \frac{1}{3}$ for sufficiently small $t>0$. Thus by Lemma \ref{lem:grad} below, we have 
\begin{equation} 
\label{eq:grad_g_estimate}
|(\grad g_t)(c(p;s))| \ge  (-\log t) \rho_0>0 
\end{equation} 
for some constant $\rho_0>0$ for a sufficiently small $t>0$. Therefore, the differential equation implies 
\[
\left| \frac{d}{ds} c(p;s) \right| = \frac{|\xi_t(p)|}{|(\grad g_t)(c(p;s))|} \le \frac{|\xi_t(p)|}{(-\log t) \rho_0} \le 2 \rho_0^{-1} |V| (-\log t)^{-1} t^\kappa  
\]
for $s\in [0,s_p)$. 
This implies that the limit $\lim_{s\to s_p-0}c(p;s)$ exists. If $s_p<1$, the solution can be extended to a larger interval, contradicting the assumption. Therefore we must have $s_p=1$ and the solution exists on the interval $[0,1]$. 

By integrating the above estimate, we get the bound (for $s\in [0,1]$): 
\begin{equation} 
\label{eq:estimate_cas}
|c(p;s)-c(p;0)| \le \rho_1 (-\log t)^{-1}t^\kappa   
\end{equation}
with $\rho_1 = 2 \rho_0^{-1} |V|$. 
This gives a $C^0$-bound for $\delta_t(p) =c(p;1)-c(p;0)$. 

To obtain a $C^1$-bound for $\delta_t$, we use the differential equation 
\begin{align} 
\label{eq:linear_DE}
\begin{split} 
\frac{d}{ds} \parfrac{c(p;s)}{p_i} & = \parfrac{\xi_t}{p_i}(p) \frac{\grad g_t}{|\grad g_t|^2}(c(p;s)) \\
& \quad + \xi_t(p) \left( F(c(p;s)) \parfrac{c(p;s)}{p_i} + 
G(c(p;s)) \parfrac{\overline{c(p;s)}}{p_i}\right) 
\end{split}
\end{align} 
where $F(c)$, $G(c)$ are square matrices of size $n+1$ (viewed as endomorphisms of $P_\C$) whose $(j,k)$-entries are given by 
\begin{align*} 
F(c)_{jk} & = -\frac{1}{|\grad g_t|^{4}} \overline{\parfrac{g_t}{c_j}}
\sum_{l=0}^n \overline{\parfrac{g_t}{c_l}} 
\parfrac{^2g_t}{c_l\partial c_k}  \\ 
G(c)_{jk} & = \frac{1}{|\grad g_t|^2} 
\overline{\parfrac{^2g_t}{c_j \partial c_k}}
- \frac{1}{|\grad g_t|^4} 
\overline{\parfrac{g_t}{c_j}} \sum_{l=0}^n \parfrac{g_t}{c_l}\overline{\parfrac{^2 g_t}{c_l \partial c_k}}. 
\end{align*} 
Given the function $c(p;s)$, \eqref{eq:linear_DE} can be viewed as a linear differential equation for $x(s)=\parfrac{c(p;s)}{a_i}$ as appears in Lemma \ref{lem:Gronwall} below. In view of Lemma \ref{lem:Gronwall}, it suffices to establish the following inequalities for sufficiently small $t>0$: 
\begin{align}
\label{eq:estimate_coeff} 
\begin{split} 
\max\left(
\left|\parfrac{\xi_t}{p_i}(p) \frac{\grad g_t}{|\grad g_t|^2}(c(p;s)) \right|, 
|\xi_t(p) F(c(p;s))_{jk}|,  
|\xi_t(p) G(c(p;s))_{jk}| \right) & \le \rho_2 t^\kappa \\
\left|\parfrac{c(p;0)}{p_i}\right| & \le \rho_2 
\end{split} 
\end{align} 
where $\rho_2>0$ is a constant independent of $p,s,t$. 
The second inequality follows from the initial condition
$c(p;0) = \Phi_t(p) = p + \iu \phi(p)/\log t$. 
Similarly to the proof of Lemma \ref{lem:approx_cycle}, we have 
\[
\left|\parfrac{\xi_t}{p_i}(p) \right| = (-\log t)
\left| \sum_{q\in V, \beta_q(p)>\kappa} q_i (1 - e^{\iu (\theta_q + \langle q, \phi(p)\rangle)}) t^{\beta_q(p)}  \right| 
\le \rho_3 (-\log t) t^\kappa 
\]
for some constant $\rho_3>0$. Therefore, using the estimate \eqref{eq:grad_g_estimate}, we get
\begin{equation} 
	\label{eq:estimate_derxi_grad} 
\left|\parfrac{\xi_t}{p_i}(p) \frac{\grad g_t}{|\grad g_t|^2}(c(p;s)) \right|
\le \rho_0^{-1} \rho_3 t^\kappa.  
\end{equation} 
Since $p\in R_t$, we have $f_t(i_t(p))\le \frac{3}{2}$ and hence 
$\beta_q(p) \ge \log_t (3/2)$. This together with the estimate \eqref{eq:estimate_cas} gives 
\begin{align*}
\Re (\beta_q(c(p;s))) & \ge \Re(\beta_q(c(p;0))) - |q| \rho_1 (-\log t)^{-1} t^\kappa \\
& \ge \log_t(3/2) - \rho_4 (-\log t)^{-1} t^\kappa. 
\end{align*} 
with $\rho_4 := \rho_1 \max_{q\in V}(|q|) $, 
where we used $\Re(\beta_q(c(p;0)))= \Re(\beta_q(\Phi(p)))=\beta_q(p)$. 
Therefore we have 
\begin{align*} 
\left| \parfrac{g_t}{c_j}(c(p;s)) \right | 
& = (-\log t) \left| \sum_{q\in V} e^{\iu\theta_q} q_j t^{\beta_q(c(p;s))} \right| \\
& \le (-\log t) \sum_{q\in V} |q_j| t^{\Re(\beta_q(c(p;s)))} \\ 
& \le (-\log t) \rho_5 \cdot t^{\log_t(3/2) - \rho_4 (-\log t)^{-1} t^\kappa} \le 
(-\log t) \rho_6 
\end{align*}
for some constant $\rho_5>0$ and $\rho_6 = \frac{3}{2} \rho_5 \exp(\rho_4)$. 
Similarly we have
\[
\left| \parfrac{^2 g_t}{c_j \partial c_k}(c(p;s))\right| 
\le (-\log t)^2 \rho_7 
\]
for some constant $\rho_7>0$. 
These estimates together with \eqref{eq:grad_g_estimate} and $|\xi_t(p)|\le 2|V| t^\kappa$ imply  
\begin{equation} 
	\label{eq:estimate_xiF_xiG} 
\max\left(|\xi_t(p) F(c(p;s))_{jk}|, |\xi_t(p) G(c(p;s))_{jk}|\right) \le \rho_8 t^\kappa 
\end{equation}
for some constant $\rho_8>0$. The estimates \eqref{eq:estimate_derxi_grad}, \eqref{eq:estimate_xiF_xiG} imply the first inequality of \eqref{eq:estimate_coeff}. The proposition is proved. 
\end{proof} 

\begin{rmk} By a similar argument, we can prove that the function $\delta_t$ constructed in the above proof is small in the $C^\infty$-topology, i.e.~$\|\delta_t\|_{C^m} =O(t^\kappa)$ for all $m\ge 0$, although we do not need this result. 
\end{rmk}

We end this subsection with the two lemmas\footnote
{Lemma \ref{lem:grad} is wrong because $g_t$ can have a critical point away from $g_t=0$. The estimate there is correct for $p$ in a neighbourhood of $\partial \Delta_\lambda$. See footnote \ref{foot:correction} and \cite[Lemma 3.6]{Yamamoto:period_tropical}.} used in the above proof. 

\begin{lem}
\label{lem:grad} 
Suppose $0<t<1$ and 
let $g_t\colon P_\C \to \C$ be the function defined by $g_t(p) = \sum_{q\in V} e^{\iu \theta_q} t^{\beta_q(p)}$. 
There exist  constants $\rho_1,\rho_2>0$ independent of $p\in P_\C$ such that
\[
\frac{|\grad g_t(p)|}{|g_t(p)|} \ge (-\log t)(\rho_1 - \rho_2 t^\kappa) 
\]
whenever $g_t(p) \neq 0$. In particular, the hypersurface $\Zring_{t,\theta}$ is smooth for sufficiently small $t>0$. 
\end{lem}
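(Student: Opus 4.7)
The plan is to exploit the linear independence condition (ii) imposed on $\kappa$ at the start of Section \ref{sec:construction_Cttheta}. Setting $a_q := e^{\iu\theta_q} t^{\beta_q(p)}$, so that $g_t(p) = \sum_{q\in V} a_q$, direct differentiation gives $\partial g_t/\partial p_j = (-\log t) \sum_{q\in V} q_j \cdot a_q$, hence
\[
|\grad g_t(p)| = (-\log t) \cdot \Big\|\sum_{q \in V} q \cdot a_q\Big\|,
\]
where $\|\cdot\|$ denotes the Hermitian norm on $P_\C \cong \C^{n+1}$ and we view $q \in \Z^{n+1}$. Thus I need to bound $\|\sum_q q \cdot a_q\|$ below in terms of $|g_t(p)| = |\sum_q a_q|$.

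Let $M := \max_{q \in V}|a_q| = t^{\min_q \Re \beta_q(p)}$, and set $K := \{k \in V : \Re\beta_k(p) \le \min_q \Re \beta_q(p) + \kappa\}$. Applying condition (ii) to $\Re p \in P_\R$, the subset $K$ is linearly independent in $Q_\R$. For $q \notin K$ we have $|a_q| \le M t^\kappa$, while the argmax of $|a_q|$ over $V$ lies in $K$, so $\max_{k \in K}|a_k| = M$. The trivial upper bound $|g_t(p)| \le |V| \cdot M$ will handle the denominator.

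The key step is a uniform lower bound on the numerator. Since there are only finitely many subsets of $V$, and for each linearly independent $K\subset V$ the linear map $\C^K \to \C^{n+1}$, $(\lambda_k)_{k\in K} \mapsto \sum_{k \in K} k \cdot \lambda_k$ is injective, compactness of the unit sphere in $\C^K$ yields a single constant $\rho > 0$ with
\[
\Big\|\sum_{k \in K} k \cdot \lambda_k\Big\| \ge \rho \cdot \max_{k \in K}|\lambda_k|
\]
for every linearly independent $K$ and every $(\lambda_k)\in \C^K$. Specializing to $\lambda_k = a_k$, then splitting off the terms $q \notin K$ and applying the triangle inequality with the crude bound $\|\sum_{q \notin K} q \cdot a_q\| \le |V| \cdot (\max_{q\in V}\|q\|) \cdot M t^\kappa$, I arrive at $\|\sum_{q\in V} q \cdot a_q\| \ge M(\rho - C t^\kappa)$ with $C := |V| \max_{q\in V}\|q\|$. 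Dividing by $|g_t(p)| \le |V| M$ then gives the stated estimate with $\rho_1 = \rho/|V|$ and $\rho_2 = C/|V|$.

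The main obstacle is really just obtaining the uniform constant $\rho$; once one recognizes that there are only finitely many linearly independent subsets of $V$, the remainder is bookkeeping. The ``in particular'' assertion then follows immediately: for $t$ small enough that $\rho_1 > \rho_2 t^\kappa$, the lemma forces $\grad g_t \neq 0$ on $\{g_t = 1\}$, so $\Zring_{t,\theta} \cong \{g_t = 1\}/(\tfrac{2\pi \iu}{\log t}\Z^{n+1})$ is smooth.
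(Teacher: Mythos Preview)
Your proof is correct and follows essentially the same approach as the paper's: both select the ``dominant'' subset $K$ using condition (ii) (applied to $\Re p$), invoke linear independence of $K$ to get a uniform lower bound on $\|\sum_{k\in K} k\,a_k\|$, and split off the exponentially small tail $q\notin K$. The only cosmetic differences are that you bound via the max norm $M=\max_k|a_k|$ and the crude denominator estimate $|g_t(p)|\le |V|\,M$, whereas the paper uses the $\ell^1$-type quantity $\sum_{k\in K}|t^{\beta_k(p)-\beta_{q_0}(p)}|$ in both numerator and denominator; either choice works.
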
 
\begin{proof} 
Fix $p\in P_\C$. Set $\beta_{q_0}(p) = \min_{q\in V}\beta_q(p)$ and $K=\{q\in V: \beta_k(p) \le \beta_{q_0}(p)+\kappa\}$. By the condition (ii) above, $K$ is linearly independent as a subset of $Q_\R$. We have 
\begin{align*} 
|\grad g_t(p)| &= (-\log t)\left| \sum_{q\in V} q e^{\iu \theta_q} t^{\beta_q(p)}\right | \\
& \ge (-\log t) \cdot |t^{\beta_{q_0}(p)}| \left( 
\left| \sum_{q\in K} q e^{\iu \theta_q} 
t^{\beta_q(p) - \beta_{q_0}(p)}\right| - \left|\sum_{q\in V\setminus K} 
q e^{\iu \theta_q} t^{\beta_q(p) - \beta_{q_0}(p)}\right|\right) \\
& \ge (-\log t) \cdot |t^{\beta_{q_0}(p)}| \left( \rho_1 \sum_{q\in K} |t^{\beta_q(p)-\beta_{q_0}(p)}|-\rho_3 t^\kappa\right) 
\end{align*} 
for some constants $\rho_1,\rho_3>0$, 
where we used the fact that $K$ is linearly independent and that all norms on a finite dimensional vector space are equivalent. 
Similarly we have 
\[
|g_t(p)| \le |t^{\beta_{q_0}(p)}| \left(\sum_{q\in K} |t^{\beta_q(p)-\beta_{q_0}(p)}|+\rho_4 t^\kappa \right)
\]
for some constant $\rho_4>0$. Combining these inequalities, we get 
\begin{align*} 
\frac{|\grad g_t(p)|}{|g_t(p)|} &\ge (-\log t)\frac{\rho_1-\rho_3 t^\kappa/\sum_{q\in K}|t^{\beta_q(p)-\beta_{q_0}(p)}|}{1+\rho_4 t^\kappa/\sum_{q\in K}|t^{\beta_q(p)-\beta_{q_0}(p)}|}\\
& \ge (-\log t) \left(\rho_1 - (\rho_1\rho_4+\rho_3) \frac{t^\kappa}{\sum_{q\in K}|t^{\beta_q(p)-\beta_{q_0}(p)}|}\right).  
\end{align*} 
Setting $\rho_2 = \rho_1 \rho_4 + \rho_3$, we obtain the conclusion since $\sum_{q\in K} |t^{\beta_q(p)-\beta_{q_0}(p)}| \ge 1$. 
\end{proof} 

\begin{lem}
\label{lem:Gronwall} 
Let $x(s)$ be a vector-valued function satisfying the differential equation 
\[
\frac{dx}{ds}(s) = y(s) + A(s) x(s) 
\]
where $y(s)$ is a vector-valued smooth function and $A(s)$ is a matrix-valued smooth function. 
Then we have 
\[
|x(1) - x(0)| \le \left (\int_0^1 |y(s) + A(s)x(0)| ds \right) e^{\int_0^1 |A(s)|ds}
\]
where $|A(s)|$ denotes the operator norm. 
\end{lem}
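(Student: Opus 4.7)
The plan is to reduce this statement to the classical (scalar) Grönwall inequality applied to $\phi(s) := |u(s)|$, where $u(s) := x(s) - x(0)$. First I would observe that $u(0) = 0$ and that
\[
u'(s) \;=\; y(s) + A(s)\bigl(u(s) + x(0)\bigr) \;=\; b(s) + A(s)\,u(s),
\]
with $b(s) := y(s) + A(s)\,x(0)$. Integrating from $0$ to $s$ and applying the triangle inequality yields the integral inequality
\[
|u(s)| \;\le\; B(s) + \int_0^s |A(r)|\,|u(r)|\,dr, \qquad B(s) := \int_0^s |b(r)|\,dr.
\]

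The second step is to invoke the standard integral Grönwall inequality, whose conclusion in this setting is
\[
|u(s)| \;\le\; B(s)\,\exp\!\left(\int_0^s |A(r)|\,dr\right).
\]
Evaluating at $s=1$ and noting that $B(1) = \int_0^1 |y(r) + A(r)\,x(0)|\,dr$ gives precisely the claimed bound on $|x(1) - x(0)|$. If one wishes to make the Grönwall step self-contained, it suffices to differentiate the auxiliary quantity $\psi(s) := \exp\!\left(-\int_0^s |A(r)|\,dr\right)\int_0^s |A(r)|\,|u(r)|\,dr$, verify $\psi'(s) \le \exp\!\left(-\int_0^s |A(r)|\,dr\right)|A(s)|\,B(s)$ using the assumed estimate and the monotonicity of $B$, and integrate from $0$ to $s$.

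There is no genuine obstacle: the lemma is essentially a textbook result, packaged here solely to supply an explicit constant for the $C^1$-estimate on $\delta_t$ in Proposition \ref{prop:delta}, where it is applied with $x(s) \leftrightarrow \partial c(p;s)/\partial p_i$, $y(s)$ being the inhomogeneous term in \eqref{eq:linear_DE} together with the antiholomorphic contribution treated as an additional forcing, and $A(s) \leftrightarrow \xi_t(p)\,F(c(p;s))$; the uniform smallness of both $y$ and $A$ in $t$, combined with the stated bound on $|\partial c(p;0)/\partial p_i|$, then produces the desired $t^\kappa$ bound on $\|\delta_t\|_{C^1}$.
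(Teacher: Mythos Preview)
Your argument is correct and is essentially identical to the paper's: the paper writes the same integral inequality for $|x(s)-x(0)|$ (bounding $\int_0^s |y+A x(0)|$ by the constant $\int_0^1 |y+A x(0)|$) and then invokes the Gronwall inequality, just as you do. The only cosmetic difference is that you keep the nondecreasing $B(s)$ and use the corresponding form of Gronwall, whereas the paper replaces it by the constant $B(1)$ before applying Gronwall; both yield the same bound at $s=1$.
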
 
\begin{proof} 
	We have the integral inequality: 
\begin{align*} 
|x(s)-x(0)| & = \left| \int_0^s y(u) + A(u) x(0) + A(u) (x(u)-x(0)) du\right| \\
& \le \int_0^1 |y(u)+A(u) x(0)| du + \int_0^s |A(u)| \cdot |x(u) - x(0)| du.  
\end{align*} 
The lemma follows by the Gronwall inequality. 
\end{proof} 

\subsection{Complex volume of a polytope}
We introduce a complex volume of an $m$-dimensional polytope `enclosed' by a complex hyperplane arrangement in $\C^m$.  The complex volume will appear in the calculation of periods of  $C_{t,\theta}$. 

Let $P\subset \R^m$ be a compact convex polytope (with non-empty interior) equipped with an orientation (as a manifold with corners). Let $\{F_i\}_{i\in I}$ be the set of facets (i.e.~faces of codimension one) of $P$. We assume that if $\bigcap_{i \in K} F_i \neq \emptyset$ for $K\subset I$, then $\bigcap_{i \in K} F_i$ is a face of codimension $|K|$. Suppose that we have a collection of  complex affine hyperplanes $\{H_i\}_{i \in I}$ in $\C^m$ labelled by the same index set $I$. We write $H_i=(\alpha_i+ \mu_i=0)$, where $\alpha_i\colon \C^m \to \C$ is a linear function and $\mu_i\in \C$ is a constant. We require that the hyperplane arrangement $\{H_i\}_{i \in I}$ satisfies the following (open) condition: for every subset $K\subset I$ such that $\bigcap_{i \in K} F_i \neq \emptyset$, the hyperplanes in the collection  $\{H_i\}_{i \in K}$ intersect transversally along a codimension $|K|$ affine subspace. 

Consider a smooth map 
\[
\Phi \colon P \to \C^m 
\]
satisfying 
\[
\Phi(F_i) \subset H_i 
\]
for all $i \in I$. 
Note that this condition (along with the requirement on $\{H_i\}_{i\in I}$ imposed above) determines the image of a vertex in $P$ under $\Phi$. Fix a holomorphic volume form $d\vol = r dc_1 \wedge \cdots \wedge dc_m$  on $\C^m$, for some $r \in \C^*$.  
The \emph{complex volume} of $(P,\Phi)$ is defined to be 
\begin{equation} 
	\label{eq:complex_volume} 
\vol_\C(P,\Phi) = \int_P \Phi^* (d\vol).  
\end{equation} 

\begin{lem} 
	\label{lem:complex_volume}
For a fixed volume form $d \vol$,    the complex volume $\vol_\C(P,\Phi)$ depends only on the hyperplane arrangement $\{H_i\}_{i \in I}$ on $\C^m$ indexed by facets of $P$. Moreover, it is a polynomial function of $\{\mu_i\}_{i\in I}$. 
\end{lem}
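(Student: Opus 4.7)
The plan exploits two observations: that $d\vol$ restricts to zero on any complex affine hyperplane of $\C^m$, and that the space of admissible maps $\Phi$ is closed under convex combinations because each $H_i$ is affine. The first is immediate: locally one may choose holomorphic coordinates so that $H_i = \{c_1 = \text{const}\}$, whence $dc_1|_{H_i}=0$ and therefore $d\vol|_{H_i}=0$. Moreover $d\vol$ is closed as a complex-valued differential form, since $\bar\partial d\vol = 0$ by holomorphicity and $\partial d\vol$ would be an $(m+1,0)$-form and hence vanishes identically on $\C^m$.

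\textbf{Independence of $\Phi$.} Given two admissible maps $\Phi_0, \Phi_1$, the affine homotopy $\tilde\Phi(p,s) := (1-s)\Phi_0(p) + s\Phi_1(p)$ still sends $F_i\times[0,1]$ into $H_i$ for each $i$. Since $d\vol$ is closed, Stokes' theorem applied to $\tilde\Phi^* d\vol$ on $P\times[0,1]$ yields
\begin{equation*}
\int_P \Phi_1^* d\vol - \int_P \Phi_0^* d\vol = \sum_{i\in I} \int_{F_i\times[0,1]} \tilde\Phi^* d\vol,
\end{equation*}
and each boundary term vanishes because $\tilde\Phi(F_i\times[0,1]) \subset H_i$ and $d\vol|_{H_i}=0$. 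This shows that $\vol_\C(P,\Phi)$ depends only on the arrangement $\{H_i\}_{i\in I}$.

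\textbf{Polynomial dependence.} By independence, I am free to choose any admissible $\Phi$ for the purpose of computing $\vol_\C(P,\Phi)$. Fix a reference map $\Phi^0:P\to\C^m$ admissible for the central arrangement $\{\alpha_i = 0\}_{i\in I}$, and set
\begin{equation*}
\Phi^\mu := \Phi^0 + \sum_{i\in I} \mu_i\, \chi_i,
\end{equation*}
where the fixed smooth maps $\chi_i:P\to\C^m$ satisfy $\alpha_j(\chi_i(p)) = -\delta_{ij}$ for all $p\in F_j$. Then $\alpha_j(\Phi^\mu(p)) = -\mu_j$ on $F_j$, so $\Phi^\mu$ is admissible for $\{H_i\}$. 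The existence of the $\chi_i$ is a partition-of-unity argument: on each face $F_K := \bigcap_{k\in K}F_k$ the conditions $\alpha_k(\chi_i) = -\delta_{ik}$ for $k\in K$ are simultaneously solvable by the transversality hypothesis (linear independence of $\{\alpha_k:k\in K\}$ in $(\C^m)^*$), and local solutions patch globally. Then
\begin{equation*}
(\Phi^\mu)^* d\vol = r\, d\Phi^{\mu,1} \wedge \cdots \wedge d\Phi^{\mu,m}
\end{equation*}
is a wedge of $1$-forms each affine-linear in $\mu$, so it is an $m$-form on $P$ whose coefficient is a polynomial of degree at most $m$ in $\{\mu_i\}_{i\in I}$. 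Integration over $P$ preserves polynomial dependence.

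\textbf{Main obstacle.} Conceptually, the core insight is the vanishing $d\vol|_{H_i}=0$ combined with the closedness of $d\vol$; given these, both assertions of the lemma fall out of Stokes' theorem and elementary linear algebra. The only genuinely technical point is the construction of the auxiliary smooth maps $\chi_i$ on all of $P$ respecting the prescribed restrictions on each facet, which is a standard partition-of-unity construction made possible by the transversality hypothesis on $\{H_i\}_{i\in I}$.
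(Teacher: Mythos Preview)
Your proof is correct, and it takes a genuinely different route from the paper's. The paper argues by induction on the dimension: writing the polynomial $m$-form as an exact form $d\eta$ and applying Stokes once reduces the integral to a sum of integrals of polynomial $(m-1)$-forms over the facets $F_i$ (with $\Phi|_{F_i}$ mapping into $H_i$), and one iterates down to dimension zero where the images of the vertices are determined by the arrangement and depend polynomially on the $\mu_i$. This inductive scheme proves independence and polynomial dependence simultaneously, and in fact establishes the stronger statement that $\int_P \Phi^*\omega$ is independent of $\Phi$ for \emph{any} polynomial $m$-form $\omega$. Your argument, by contrast, isolates the key geometric fact specific to the holomorphic volume form---that $d\vol$ is an $(m,0)$-form and hence restricts to zero on every complex hyperplane $H_i$---and combines it with a single application of Stokes on the cylinder $P\times[0,1]$ via the affine homotopy. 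This is cleaner for the independence claim (no induction) and makes the mechanism transparent, though it does not yield the more general statement. For polynomial dependence you then construct an explicit $\mu$-affine family $\Phi^\mu$, which is correct but requires the auxiliary partition-of-unity construction of the $\chi_i$; the paper's inductive approach gets polynomiality for free at the base of the induction.
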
 
\begin{proof} 
This follows from a repeated application of the Stokes theorem. More generally, for a polynomial $m$-form $\omega$ on $\C^m$, we can see that the integral 
\[
\int_P \Phi^*\omega 
\]
does not depend on the choice of $\Phi$ satisfying the above condition. Since any polynomial $m$-form on $\C^m$ is exact, by Stokes theorem, we can write it as the sum of integrals $\int_{F_i} (\Phi|_{F_i})^*\eta_i$ over $i \in I$ with $\eta_i$ being a polynomial $(m-1)$-form on $H_i$. The conclusion follows by induction on the dimension. 
\end{proof} 

\subsection{Computing periods of $C_{t,\theta}$ tropically}
By construction, the cycle $C_{t,\theta}$ is identified with the positive real cycle $C_t^+$ via the map $\tPhi_t=\Phi_t+\delta_t$. The tropical decomposition of $C_t^+$ in Section \ref{sec:decomposing-domain} then induces a decomposition of $C_{t,\theta}$. We can decompose and calculate the period of $C_{t,\theta}$ in almost the same way as before, with the only significant difference being that we use the complex volume (instead of the real volume) of a polytope and apply a complexified Duistermaat--Heckman theorem. We use the notation in Section \ref{sec:periodR}. 

Let $\kappa>0$ be as in Section \ref{sec:construction_Cttheta} and let $\epsilon>0$ be a sufficiently small number satisfying $0<\epsilon<\kappa/2$. Let $B_t^{q,K}\subset B_t$ be the region introduced in Section \ref{sec:decomposing-domain} defined by this $\epsilon$. We have 
\[
\int_{C_{t,\theta}} \Omega_{t,\theta} = \sum_{q,K} \int_{B^{q,K}_t} \tPhi_t^*i_t^* \Omega_{t,\theta} 
\]
where, as before, it suffices to consider the sum over all $q\in V$ and a subset $K\subset V$ not containing $q$ such that the facets of $\Delta_\lambda$ corresponding to $K\sqcup \{q\}$ have nonempty intersection. 

Let $(a,b_k,c_j)$ be affine coordinates on $P_\R$ from Section \ref{sec:approx_each_region} associated with the choice of $q$ and $K$. We naturally extend these coordinates to complex coordinates on $P_\C$. Similarly to equations \eqref{eq:f_t_on_region}--\eqref{eqn:negl}, we have 
\[
f_{t,\theta}\circ i_t(p) = e^{\iu\theta_q} t^a \left( 1+ \sum_{k\in K} e^{\iu(\theta_k-\theta_q)}t^{b_k} + h_t(p) \right) 
\]
where $h_t(p) = \sum_{m\in V\setminus (\{q\}\sqcup K)}e^{\iu(\theta_i - \theta_q)} t^{\beta_m(p) - a}$ satisfies the uniform estimate 
\[
h_t(p) \in O(t^\epsilon), \quad \parfrac{h_t}{p_i}(p) \in  O((-\log t) t^\epsilon)
\]
over $\tPhi_t(B^{q,K}_t)$; this follows from the $C^0$-estimate for $\delta_t$ in Proposition \ref{prop:delta}. Hence a calculation similar to equations  \eqref{eq:Omega_in_xy}--\eqref{eq:Omega_in_bc} shows 
\begin{align*} 
i_t^*\Omega_{t,\theta} & = \left. r_{q,K} (-\log t)^{n+1} \frac{da\wedge \bigwedge_{k\in K} db_k \wedge \bigwedge dc_j}{d(f_{t,\theta}\circ i_t)}\right|_{(f_{t,\theta} \circ i_t)^{-1}(1)} \\ 
& = (1+O(t^\epsilon))\cdot (-\log t)^n \cdot r_{q,K} \bigwedge_{k\in K} db_k \wedge \bigwedge _j dc_j
\end{align*} 
over $\tPhi_t(B^{q,K}_t)$, where $r_{q,K}>0$ is the number in Section \ref{sec:approx_each_region}. 

Consider the Riemannian metric $g$ on $B_t$ induced from the Euclidean metric on the ambient space $P_\R \cong \R^{n+1}$. Then the volume of $B_t$ with respect to $g$ is bounded as $t\to +0$; this follows from the leading asymptotics of periods $\int_{B_t} i_t^*\Omega_t$ in Theorem \ref{thm:periodR} and the estimate 
\begin{align*} 
\int_{B_t} d\vol_g & = \int_{B_t} |d(f_t \circ i_t)|\frac{dp_0 \cdots dp_n}{d(f_t \circ i_t)} \le \frac{\rho}{(-\log t)^n} \int_{B_t} i_t^*\Omega_t 
\end{align*} 
for some constants $\rho>0$, where we used the estimate $|d(f_t\circ i_t)| \le \rho (-\log t)$ over $B_t$. When we define the $C^1$-norm of $\delta_t|_{B_t}$ using the metric $g$, we have $\|\delta_t|_{B_t}\|_{C^1} \le \|\delta_t\|_{C^1}$; hence by Proposition \ref{prop:delta}, $\|\delta_t|_{B_t}\|_{C^1} = O(t^\kappa)$. Moreover it is easy to show that the $C^1$-norm of $\Phi_t$ (and hence of $\Phi_t|_{B_t}$) is bounded. From these facts, we obtain the approximation: 
\begin{multline*}
\int_{B^{q,K}_t}\tPhi_t^*i_t^*\Omega_{t,\theta} 
= (1+O(t^\epsilon)) \cdot (-\log t)^n \cdot r_{q,K}  
\int_{B_t^{q,K}} \tPhi_t^*\left(\textstyle{\bigwedge_{k\in K} db_k \wedge \bigwedge _j dc_j}\right) \\ 
= (1+O(t^\epsilon)) \cdot (-\log t)^n \cdot 
\left( r_{q,K} \int_{B_t^{q,K}} \Phi_t^*\left(\textstyle{ \bigwedge_{k\in K} db_k \wedge \bigwedge _j dc_j}\right) + O(t^\kappa) \right). 
\end{multline*} 
This corresponds to \eqref{eq:Omega_in_bc} in Section \ref{sec:approx_each_region}. 

Over $B^{q,K}_t$, we have $1= \sum_{k\in V} t^{\beta_k(p)} \le |V| t^a$, and thus $0\le a\le \log |V|/(-\log t)$. Therefore, by taking $t>0$ sufficiently small, we may assume that $0\le \beta_q=a \le \epsilon<\kappa$ and that $\beta_k = a+b_k \le 2 \epsilon < \kappa$ for $k\in K$ over $B^{q,K}_t$. This implies 
\[
\Phi_t^*b_k= b_k+ \iu \frac{\theta_k-\theta_q}{-\log t} \qquad \text{over $B^{q,K}_t$} 
\]
for $k\in K$ by the condition \eqref{eq:bdry_cond_phi}. Therefore we have 
\begin{align}
\label{eq:integral_vol_FqK(b)} 
\begin{split} 
r_{q,K} \int_{B_t^{q,K}} \Phi_t^*\left(\textstyle{ \bigwedge_{k\in K} db_k \wedge \bigwedge _j dc_j}\right) 
& = \int_{[0,\epsilon]^K} d\tilde{b} \int_{B_t^{q,K}\cap \{b=\tilde{b}\}} \Phi_t^*(d\vol_{q,K}) \\
& = \int_{[0,\epsilon]^K} d\tilde{b} \int_{F_{q,K}(\tilde{b})} (\Phi_t \circ s_{\tilde{b}})^*(d\vol_{q,K})
\end{split} 
\end{align}  
where $B_t^{q,K} \cap \{b=\tilde{b}\}$ denotes the subset of $B_t^{q,K}$ where the values of the coordinates $b=(b_k)_{k\in K}$ equal $\tilde{b}\in [0,\epsilon]^K$, $F_{q,K}(\tilde{b}) = \pi_c(B_t^{q,K} \cap \{b=\tilde{b}\})$ is its projection to the $c$-plane as appears in Section \ref{sec:approx_vol_polytope}, and $s_{\tilde{b}}$ denotes the (unique) section of the projection $\pi_c\colon B_t^{q,K} \cap \{b= \tilde{b}\} \to F_{q,K}(\tilde{b})$; we have $s_{\tilde{b}}(c)= (a_{q,K}(\tilde{b},c),\tilde{b},c)$. Also $d\vol_{q,K} = r_{q,K} \bigwedge_j dc_j$ (see \eqref{eq:residual_vol}) is now regarded as a holomorphic form on $P_\C$. 

Recall the function $a_{q,K}'(b)$ from \eqref{eq:approx_aqK}. It differs from $a_{q,K}(b,c)$ by a function of order $O(t^\epsilon)$. Moreover, we have the uniform estimate over the region $\pi_{b,c}(B^{q,K}_t)$ 
\[
\parfrac{a_{q,K}(b,c)}{c_j} = O(t^\epsilon). 
\]
In fact, by differentiating \eqref{eq:defining_eqn_Bt} with respect to $c_j$, we get 
\begin{multline*}
\parfrac{a_{q,K}}{c_j}(b,c) \left( 1+\sum_{m\notin \{q\}\sqcup K}\left(\partial_a \beta_m-1\right)t^{\beta_m(a_{q,K}(b,c),b,c)} \right) \\ 
= - \sum_{m\notin\{q\}\sqcup K} (\partial_{c_j}\beta_m) t^{\beta_m(a_{q,K}(b,c),b,c)}
\end{multline*}
where we regard $\beta_m=\beta_m(a,b,c)$ as an affine linear form in $a,b,c$; the above estimate follows by $\beta_m(a_{q,K}(b,c),b,c)\ge a_{q,K}(b,c)+\epsilon>\epsilon$ over $\pi_{b,c}(B^{q,K}_t)$. By this estimate, we can replace the section $s_b$ in \eqref{eq:integral_vol_FqK(b)} with $s_b'(c) = (a_{q,K}'(b),b,c)$ with error terms of order $O(t^\epsilon)$. Furthermore, by the same argument as in Section \ref{sec:approx_vol_polytope}, we can replace $F_{q,K}(b)$ with the polytope $F_{q,K}'(a'_{q,K}(b),b)$ with error terms of order $O(t^\epsilon)$ again. Namely, we have 
\begin{align*}
\int_{F_{q,K}(b)} (\Phi_t\circ s_b)^*d\vol_{q,K} & = \int_{F_{q,K}(b)} (\Phi_t\circ s_b')^*(d\vol_{q,K}) + O(t^\epsilon) \\
& = \int_{F_{q,K}'(a_{q,K}'(b),b)} (\Phi_t\circ s_b')^*(d\vol_{q,K}) + O(t^\epsilon) \\ 
& = \vol_\C\left(F_{q,K}'(a_{q,K}'(b),b), \Phi_t\right). 
\end{align*} 
In the last line, we identified the polytope $F_{q,K}'(a_{q,K}'(b),b)$ with its image by $s_b'$ (as we did in Section \ref{sec:approx_vol_polytope}) and considered its complex volume \eqref{eq:complex_volume} with respect to $d\vol_{q,K}$. Note that $\Phi_t(F_{q,K}'(a_{q,K}'(b),b))$ is contained in the affine subspace of $P_\C$ where the values of the complex affine-linear functions $a$, $b_k$ are fixed (again by the condition \eqref{eq:bdry_cond_phi}). 

Applying the inclusion-exclusion principle for the complex volumes of polytopes as we did in Section \ref{sec:approx_vol_polytope}, we arrive at the formula\footnote{We express the complex volume of  $F_{q,K}'(a,b)$ as a signed sum of the complex volumes of $\bigcup_{b'\in [0,\epsilon]^I} E_{q,K\sqcup I}(a,b,b')$. Then we use the fact that $\Phi_t^*(b'_i)$ (with $i\in I$) on $\bigcup_{b'\in [0,\epsilon]^I} E_{q,K\sqcup I}(a,b,b')$ equals $b'_i+\iu (\phi_i-\phi_q)/(-\log t)$ for  $a=a'_{q,K}(b)$ to factor out $db'$.}: 
\begin{multline*} 
\int_{C_{t,\theta}} \Omega_{t,\theta} 
= (1+O(t^\epsilon)) \ (-\log t)^n \\
\times \left( \sum_{q,K\subset J,q\notin J} (-1)^{|J\setminus K|}\int_{[0,\epsilon]^J}  \vol_\C\left(E_{q,J}(a_{q,K}'(b),b,b'),\Phi_t\right) db db' + O(t^\epsilon)\right) 
\end{multline*} 
where we write $b=(b_j)_{j\in K}$ and $b'=(b'_j)_{j\in J\setminus K}$ and used the holomorphic form $d \vol_{q,J}$ to define the complex volume for $(E_{q,J}(a_{q,K}'(b),b,b'),\Phi_t)$. 
This generalizes \eqref{eqn:perH}. 

Finally we apply a complex version of the Duistermaat--Heckman theorem. Consider the polytope $E_{q,J}(a,b)$ with $0<a, b_j<\epsilon$.  The image of $E_{q,J}(a,b)$ under $\Phi_t$ is contained in the complex affine subspace of $P_\C$ defined by 
\begin{align*}
\beta_q & = a + \iu \frac{\theta_q}{-\log t}, \\ 
\beta_j & = a+b_j+ \iu\frac{\theta_j}{-\log t} \qquad j\in J. 
\end{align*} 
by \eqref{eq:bdry_cond_phi}. 
The facets of $E_{q,J}(a,b)$ are given by $E_{q,J}(a,b) \cap \{\beta_m = a\}$ for $m\notin \{q\} \sqcup J$. These facets map (under $\Phi_t$) to the complex affine hyperplane given by 
\[
\beta_m-\beta_q= \iu \frac{\phi_m-\phi_q}{-\log t} 
\]
by \eqref{eq:bdry_cond_phi}. Therefore its complex volume $\vol_\C(E_{q,K}(a,b),\Phi_t)$ is a polynomial function of the constant terms of these complex affine-linear forms by Lemma \ref{lem:complex_volume}. By analytic  continuation of Lemma \ref{lem:DH}, we get 
\[
\vol_\C(E_{q,J}(a,b),\Phi_t) = \int_{Y_{\Delta_\lambda}}  
\exp\left(\omega_{\lambda}+\iu \frac{\sum_{k\in V} \theta_k D_k}{\log t} - \sum_{j\in J} b_j D_j - a \sigma\right) \cdot D_q \cdot \prod_{j\in J} D_j. 
\] 
We obtain this from Lemma \ref{lem:DH} by substituting $\lambda_k + \iu (\theta_k/\log t)$ for $\lambda_k$ for all $k\in V$. 

The rest of the argument works in exactly the same way as before, just by replacing $\lambda_k$ with $\lambda_k+\iu(\theta_k/\log t)$, and we arrive at Theorem \ref{thm:cycle_with_phase}. 

\bibliographystyle{gtart}
\bibliography{GammaSYZ}

\end{document}